\documentclass[11pt]{amsart}
\usepackage{amscd}
\usepackage{amsmath}
\usepackage{amssymb}
\usepackage{latexsym}
\usepackage{xypic}

\input
xy \xyoption{all}

\newtheorem{thm}{Theorem}[section]
\newtheorem{prop}[thm]{Proposition}
\newtheorem{lem}[thm]{Lemma}

\newtheorem{lem-def}[thm]{Lemma-Definition}
\newtheorem{cor}[thm]{Corollary}
\theoremstyle{remark}
\newtheorem{ex}[thm]{Example}
\newtheorem{rmk}{Remark}[section]
\theoremstyle{definition}
\newtheorem{dfn}{Definition}[section]

\numberwithin{equation}{section}

\newcommand{\bbK}{{\mathbb K}}
\newcommand{\bbL}{{\mathbb L}}

\newcommand{\bbV}{{\mathbb V}}
\newcommand{\bbZ}{{\mathbb Z}}

\newcommand{\bbN}{{\mathbb N}}
\newcommand{\da}{{\mathbb A}}

\newcommand{\calC}{{\mathcal C}}
\newcommand{\calD}{{\mathcal D}}
\newcommand{\calE}{{\mathcal E}}

\newcommand{\calK}{{\mathcal K}}
\newcommand{\calL}{{\mathcal L}}

\newcommand{\calO}{{\mathcal O}}
\newcommand{\calP}{{\mathcal P}}

\newcommand{\calT}{{\mathcal T}}

\newcommand{\calV}{{\mathcal V}}

\newcommand{\calZ}{{\mathcal Z}}
\newcommand{\ff}{{\mathcal F}}
\newcommand{\g}{{\mathcal G}}
\newcommand{\h}{{\mathcal H}}
\newcommand{\nc}{\newcommand}

\newcommand{\oo}{{\mathcal O}}
\newcommand{\ad}{{\mathcal A}}

\nc{\on}{\operatorname}

\newcommand{\Aut}{{\on{Aut}}}
\newcommand{\End}{{\mathrm{End}}}
\newcommand{\Ext}{{\mathrm{Ext}}}

\newcommand{\Hom}{{\mathrm{Hom}}}
\nc{\id}{{\on{id}}}

\newcommand{\spec}{{\on{Spec}}}
\newcommand{\Frac}{{\on{Frac}}}
\nc{\Tate}{{\on{Tate}}}

\newcommand{\GL}{{\on{GL}}}

\nc{\Comm}{{\on{Comm}}}
\nc{\Det}{{\calD et}}

\newcommand{\lrto}{\longrightarrow}

\topmargin-0.5cm \textheight22cm \oddsidemargin1.2cm
\textwidth14cm

\begin{document}

\author{Denis Osipov, Xinwen
Zhu}

\title{A categorical proof of the Parshin reciprocity laws on algebraic
surfaces}

\maketitle

\begin{abstract} We define and study the $2$-category of torsors over a
Picard groupoid, a central extension of a group by a Picard
groupoid, and  commutator maps in this central extension. Using this
in the context of two-dimensional local fields and two-dimensional
ad\`ele theory we obtain the two-dimensional tame symbol and a new
proof of Parshin reciprocity laws on an algebraic surface.
\end{abstract}

\section{Introduction}

Let $C$ be a projective algebraic curve over a perfect field $k$.
Then there is the following famous Weil reciprocity law:
\begin{equation} \label{prW}
\prod_{p \in C} \on{Nm}_{k(p)/k}\{f,g \}_p =1 \mbox{,}
\end{equation}
where $f,g \in k(C)^{\times}$, $\{f,g \}_p$ is the
one-dimensional tame symbol, which is equal to $(-1)^{\nu_p(f)
\nu_p(g)} \frac{f^{\nu_p(g)}}{g^{\nu_p(f)}}(p)$, and $k(p)$ is
the residue field of the point $p$. The product~\eqref{prW} contains
only finitely many non-equal to $1$ terms.

There is the proof of Weil reciprocity law (and the analogous
reciprocity law for residues of rational differential forms: sum of
residues equals to zero) by reduction to the case of ${\bf{P}}^1_k$
using the connection between tame symbols (and residues of
differentials) in extensions of local fields, see, for example,
\cite[ch.~2-3]{S}.

On the other hand, Tate gave in~\cite{T} the definition of local
residue of differential form as some trace of an
infinite-dimensional matrix. Starting from this definition he gave
an intrinsic  proof of the residue formula on a projective algebraic
curve $C$ using the fact that $\dim_k H^i (C, \oo_C) < \infty$, $i
=0,1$.

The multiplicative analog of Tate's approach, i.e. the case of the
tame symbol and the proof of Weil reciprocity law, was done later by
Arbarello, De Concini and Kac in~\cite{ACK}. They used the central
extension of some infinite dimensional group of matrices $\GL(K)$ by
the group $k^{\times}$ such that the group $\GL(K)$ acts on the
field $K=k((t))$, and obtained the tame symbol up to sign as the
commutator of the lifting of two elements from $K^{\times} \subset
\GL(K)$ to this central extension. Hence, as in Tate's proof
mentioned above, they obtained an intrinsic proof of the Weil
reciprocity law on an algebraic curve. However, in this proof the
exterior algebra of finite-dimensional $k$-vector spaces was used.
Therefore difficult sign conventions were used in this paper to
obtain the reciprocity law. To avoid these difficulties,
in~\cite{BBE} Beilinson, Bloch and Esnault used the category of
graded lines instead of the category of lines. The category of
graded lines has non-trivial commutativity constraints multipliers
$(-1)^{mn}$, where $m, n \in \bbZ$ are corresponding gradings. In
other words, they used the Picard groupoid of graded lines which is
a non-strictly commutative instead of strictly commutative Picard
groupoid. It was the first application of this notion of
non-strictly commutative Picard groupoid.

\medskip

Now let $X$ be an algebraic surface over a perfect field $k$. For
any pair $x \in C$, where $C \subset X$ is a curve that $x\in C$ is
a closed point, it is possible to define the ring $K_{x,C}$ such
that $K_{x,C}$ is isomorphic to the two-dimensional local field
$k(x)((t))((s))$ when $x$ is a smooth point on $C$ and $X$. If $x$
is not a smooth point, then $K_{x,C}$ is a finite direct sum of
two-dimensional local fields (see section~\ref{sec5.2} of this
paper). For any two-dimensional local field $k'((t))((s))$ one can
define the two-dimensional tame symbol of $3$ variables with values
in $k'^{\times}$, see section~\ref{ts} and~\cite{Pa0}, \cite[\S
3]{Pa}. Parshin formulated and proved the reciprocity laws for
two-dimensional tame symbols, but his proof was never published.
Contrary to the $1$-dimensional case, there are a lot of reciprocity
laws for two-dimensional tame symbols, which belong to two types.
For the first type we fix a point on the surface and will vary
irreducible curves containing this point. For the second type we fix
a projective irreducible curve on the surface and will vary points
on this curve. Parshin's idea for the proof, for example, of more
unexpected first type of reciprocity laws, was to use the chain of
successive blowups  of points on algebraic surfaces. Later, Kato
generalized the reciprocity laws for excellent schemes by using the
reduction to the reciprocity law of Bass and Tate for Milnor
K-groups of some field $L(t)$, see~\cite[prop.~1]{K}. He used them
to construct an analog of the Gersten-Quillen complex for Milnor
K-theory.

In this paper, we give a generalization of Tate's proof of the
reciprocity law on an algebraic curve to the case of two-dimensional
tame symbols and obtain an intrinsic proof of Parshin reciprocity
laws for two-dimensional tame symbols on an algebraic surface.

To fulfill this goal, we first generalize the notion of a central
extension of a group by a commutative group and of the commutator
map associated to the central extension. More precisely, we define
and study in some detail the properties of the category of central
extensions of a group $G$ by a (non strictly commutative) Picard
groupoid $\calP$. Roughly speaking, an object in this category is a
rule to assign every $g\in G$ a $\calP$-torsor, satisfying certain
properties. For such a central extension $\calL$ we define a map
$C_3^{\calL}$ which is an analog of the commutator map. In this case
when $G$ is abelian, this commutator map is an anti-symmetric and
tri-multiplicative map from $G^3$ to the group $\pi_1(\calP)$. Let
us remark that to obtain some of these properties, we used the
results of Breen from~\cite{Br2} on group-like monoidal
$2$-groupoids. We hope these constructions would be of some
independent interest.

We then apply this formalism to $\calP=\calP ic^\bbZ$, where $\calP
ic^\bbZ$ stands for the Picard groupoid of graded lines. The key
ingredient here is Kapranov's graded-determinantal theory
from~\cite{Kap}, which associates a $\calP ic^\bbZ$-torsor to every
1-Tate vector space (a.k.a. locally linearly compact vector space).
This allows one to construct the central extension $\calD et$ of
$\GL(\bbK)$ by $\calP ic^\bbZ$, where $\bbK$ is a two-dimensional
local field (or more generally, a 2-Tate vector space). It turns out
that the two-dimensional tame symbol coincides with the commutator
map $C_3^{\calD et}$. Finally, using "semilocal" ad\`ele complexes
on an algebraic surface we obtain that the corresponding central
extension constructed by semilocal fields on the surface is the
trivial one. This leads us to a new proof of Parshin's reciprocity
laws on an algebraic surface, which is distinct from both Parshin's
original approach as well as Kato's.

Our approach to the reciprocity laws on the algebraic surfaces has
the following features. First, we use the non-strictly commutative
Picard groupoid, which can be regarded as another application of
this notion after \cite{BBE}. However, unlike the one-dimensional
case where one can just plays with the usual Picard groupoid of
lines (though complicated, as done in \cite{ACK}), the use of $\calP
ic^\bbZ$ is essential here. This indicates that the non-strictly
commutative Picard groupoid is an important and fruitful
mathematical object that deserves further attention. Also, in order
to apply this notion, we develop certain constructions in higher
categories (e.g. the commutator map $C_3^\calL$), which could be
potentially useful elsewhere. Second, as in the one-dimensional
case, our approach uses certain local-to-global (in other words,
factorization) principle. Since the local-to-global (factorization)
principle in the one-dimensional story is very important in the
Langlands program and conformal field theory, we hope our approach
is just a shadow of a whole fascinating yet explored realm of
mathematics. Finally, our approach can be generalized by replacing
the ground field $k$ by an Artinian ring $A$ (and even more general
rings) and we can obtain the reciprocity laws for two-dimensional
Contou-Carr\`{e}re symbols. By choosing appropriate $A$, this
specializes to residue formulas for algebraic surfaces\footnote{The
generalization of Tate's approach to the $n$-dimensional residue of
differential form was done in~\cite{Be1}, but this note contains no
proofs.}. We will carefully discuss this in the next paper.

\medskip

The paper is organized as follows. In section~\ref{gennon} we
describe some categorical constructions, which we need further on.
In section~\ref{picgr} we recall the definition of a Picard
groupoid. In section~\ref{strict vs non-strict} we discuss the
difference between strictly commutative and non-strictly commutative
Picard groupoids. In section~\ref{Ptorsors} we describe the
$2$-category of $\calP$-torsors, where $\calP$ is a Picard groupoid.
In section~\ref{hompic} we study the Picard groupoid of
homomorphisms from a group $G$ to a Picard groupoid $\calP$ and
describe the "commutator" of two commuting elements from $G$ with
values in $\pi_1(\calP)$. In section~\ref{sec2} we define and study
the Picard $2$-groupoid of central extensions of a group $G$ by a
Picard groupoid $\calP$. We define and study properties of the
commutator category of such a central extension, and finally study
the "commutator" of three commuting elements form $G$ with values in
$\pi_1(\calP)$. This section may be of independent interest.

In section~\ref{tate} we recall the theory of graded-determinantal
theories on Tate vector spaces. We recall the definition and basic
properties of the category of $n$-Tate vector spaces in
section~\ref{tvecs}. In section~\ref{detth} we recall the definition
of determinant functor from the exact category $(\Tate_0,
\on{isom})$ to the Picard groupoid $\calP ic^{\bbZ}$ of graded lines
and the definition of graded-determinantal theory on the exact
category $\Tate_1$ of $1$-Tate vector spaces.

In section~\ref{appl} we apply the constructions given above to
one-dimensional and two-dimensional local fields. In
section~\ref{ts} we review one-dimensional and two-di\-men\-si\-onal
tame symbols. In section~\ref{one-story} we obtain a description of
the one-dimensional (usual) tame symbol as some commutator. In
section~\ref{2-story} we obtain the two-dimensional tame symbol as
commutator of $3$ elements in some central extension of the group
$K^{\times}= k((t))((s))^{\times}$ by the Picard groupoid $\calP
ic^{\bbZ}$.

In section~\ref{reslaws} we obtain the reciprocity laws. In
section~\ref{Weil} we give the proof of Weil reciprocity law using
the constructions given above and ad\`ele complexes on a curve. In
section~\ref{sec5.2} we apply  the previous results in order  to
obtain a proof of Parshin's reciprocity laws on an algebraic surface
using "semilocal" ad\`ele complexes on an algebraic surface.

\medskip

\noindent {\bf Acknowledgments.} The first author is grateful to
L.~Breen, M.~Kapranov and A.N.~Parshin for useful discussions on
categorical approach to the two-dimensional tame symbol at various
periods of time. The first author was financially supported by
Russian Foundation for Basic Research (grant no.~08-01-00095) and by
the Programme for the Support of Leading Scientific Schools of the
Russian Federation (grant no.~NSh-4713.2010.1). The second author
thanks V. Kac for useful discussions. The research of the second
author is supported by NSF grant under DMS-1001280.

\section{General  nonsense} \label{gennon}

\subsection{Picard groupoid} \label{picgr}
Let $\calP$ be a Picard groupoid, i.e. a symmetric monoidal
group-like groupoid. Let us recall that this means that $\calP$ is a
groupoid, together with a bifunctor
\[+:\calP\times\calP\to\calP\]
and natural (functorial) isomorphisms
\[a_{x,y,z}: (x+y)+z\simeq x+(y+z), \]
called the associativity constraints, and natural (functorial) isomorphisms
\[c_{x,y}:x+y\simeq y+x,\]
called the commutativity constraints, such that:
\begin{enumerate}
\item[(i)] For each $x\in\calP$, the functor $y\mapsto x+y$ is an
equivalence.
\item[(ii)] The pentagon axiom holds, i.e. the following diagram
is commutative
\begin{equation}\label{pentagon}\xymatrix{&(x+y)+(z+w)\ar[dl]\ar[dr]&\\
x+(y+(z+w))\ar[d]&&((x+y)+z)+w\ar[d]\\
x+((y+z)+w)\ar[rr]&&(x+(y+z))+w}\end{equation}
\item[(iii)] The hexagon axiom holds, i.e. the following diagram
is commutative
\begin{equation}\label{hexagon}\xymatrix{&(x+y)+z\ar[dl]\ar[dr]& \\
(y+x)+z\ar[d]&&x+(y+z)\ar[d]\\
y+(x+z)\ar[dr]&&x+(z+y)\ar[dl]\\
&(x+z)+y&}\end{equation}
\item[(iv)] For any $x,y\in\calP$, $c_{y,x}c_{x,y}=id_{x+y}$.
\end{enumerate}

A unit $(e,\varphi)$ of $\calP$ is an object $e\in\calP$ together
with an isomorphism $\varphi:e+e\simeq e$. It is an exercise to show
that $(e,\varphi)$ exists and is unique up to a unique isomorphism.
For any $x\in\calP$, there is a unique isomorphism $e+x\simeq x$
such that the following diagram is commutative
\[\xymatrix{(e+e)+x\ar[rr]\ar[dr]&&e+(e+x)\ar[dl]\\
                   &e+x&}\]
and therefore $x+e\simeq e+x\simeq x$. For any $x\in\calP$, we choose an object, denoted by $-x$, together
with an isomorphism $\phi_x:x+(-x)\simeq e$. The pair $(-x,\phi_x)$
is called an inverse of $x$, and it is unique up to a unique
isomorphism. We choose for each $x$ its inverse $(-x,\phi_x)$, then we have a canonical isomorphism
\begin{equation}\label{inverse}
-(-x)\simeq e+(-(-x))\simeq (x+(-x))+(-(-x))\simeq x+((-x)+(-(-x)))\simeq x+e\simeq x,
\end{equation}
and therefore a canonical isomorphism
\begin{equation}\label{adjun}
(-x)+x\simeq (-x)+(-(-x))\simeq e.
\end{equation}
Observe that we have another isomorphism $(-x)+x\simeq x+(-x)\simeq e$ using the commutativity constraint. When the Picard groupoid $\calP$ is strictly commutative (cf. \S \ref{strict vs non-strict}), these
 two isomorphisms are the same (cf. \cite[lemma 1.6]{Zhu}),  but in general they are different.

If $\calP_1,\calP_2$ are two Picard groupoids, then $\Hom(\calP_1,\calP_2)$ is defined as follows.
Objects are $1$-homomorphisms, i.e., functors $F:\calP_1\to\calP_2$ together with isomorphisms $F(x+y)\simeq F(x)+F(y)$
such that the following diagrams are commutative:
$$
\xymatrix{F((x+y)+z)\ar[r]\ar[d]&(F(x)+F(y))+F(z)\ar[d]\\
F(x+(y+z))\ar[r]&F(x)+(F(y)+F(z)) \mbox{,}
}
$$
\begin{equation} \label{dz}
\xymatrix{
F(x+y)\ar[r]\ar[d]&F(x)+F(y)\ar[d]\\
F(y+x)\ar[r]&F(y)+F(x) \mbox{.}
}
\end{equation}

Morphisms in $\Hom(\calP_1,\calP_2)$ are $2$-isomorphisms, i.e.,
natural transformations
\[
\theta:F_1\to F_2
\]
such that the following diagram is commutative
\[\xymatrix{F_1(x+y)\ar[r]\ar_\theta[d]&F_1(x)+F_1(y)\ar^\theta[d]\\
F_2(x+y)\ar[r]&F_2(x)+F_2(y) \mbox{.}}\]

It is clear that $\Hom(\calP_1,\calP_2)$ has a natural structure as
a Picard groupoid. Namely,
\[(F_1+F_2)(x):=F_1(x)+F_2(x),\]
and the
isomorphism $(F_1+F_2)(x+y)\simeq (F_1+F_2)(x)+(F_1+F_2)(y)$ is the
unique one such that the following diagram is commutative
\[\xymatrix{F_1(x+y)+F_2(x+y)\ar[r]\ar[d]&(F_1(x)+F_1(y))+(F_2(x)+F_2(y))\ar[dl]\\
(F_1(x)+F_2(x))+(F_1(y)+F_2(y))&
}\]
The associativity constraints and the commutativity constraints for $\Hom(\calP_1,\calP_2)$ are clear.
If $\calP_1,\calP_2,\calP_3$ are three Picard groupoids, then $\Hom(\calP_1,\calP_2;\calP_3)$ is defined as $\Hom(\calP_1,\Hom(\calP_2,\calP_3))$, called the Picard groupoid of bilinear homomorphisms from $\calP_1\times\calP_2$ to $\calP_3$. The Picard groupoid of trilinear homomorphisms from $\calP_1\times\calP_2\times\calP_3$ to $\calP_4$ is defined similarly.

For a (small) monoidal group-like groupoid (or gr-category) $C$ we
denote by $\pi_0(C)$ the group\footnote{The group structure on
$\pi_0(C)$ is induced by the monoidal structure of $C$.} of
isomorphism classes of objects. We denote by $\pi_1(C)$ the group
$\Aut_C(e)$, where $e$ is the unit objet of $C$.  It follows that
$\pi_1(C)$ is an abelian group. If $C$ is a Picard groupoid, then
$\pi_0(C)$ is also an abelian group.

\subsection{Strictly commutative vs. non-strictly commutative Picard groupoids}\label{strict vs non-strict}
If the commutativity constraints $c$ further satisfy $c_{x,x}=\id$,
then the Picard groupoid $\calP$ is called strictly commutative. It
is a theorem of Deligne's (cf. \cite{Del}) that the $2$-category of
strictly commutative Picard groupoids is $2$-equivalent to the
$2$-category of $2$-term complexes of abelian groups concentrated on
degree $-1$ and $0$, whose degree $-1$ term abelian groups are
injective\footnote{In fact, Deligne's theorem holds in any topos.}.

\begin{ex}
The most famous example is $\calP=BA$, where $A$ is an abelian
group, and $BA$ is the category of $A$-torsors. The tensor products
of $A$-torsors make $BA$ a strictly commutative Picard groupoid. The
$2$-term complex of abelian groups that represents $BA$ under
Deligne's theorem is any injective resolution of $A[1]$. If
$A=k^\times$ is the group of invertible elements in a field $k$,
then $BA$ is also denoted by $\calP ic$, which is the symmetric
monoidal category of $1$-dimensional $k$-vector spaces.
\end{ex}

However, it is also important for us to consider the non strictly
commutative Picard groupoids. The following example of a
non-strictly commutative Picard groupoid is crucial.

\begin{ex}
Let $\calP ic^\bbZ$ denote the category of graded lines (i.e.
$1$-dimensional $k$-vector spaces with gradings), over a base field
$k$. An object in $\calP ic^\bbZ$ is a pair $(\ell,n)$ where $\ell$
is a $1$-dimensional $k$-vector space, and $n$ is an integer. The
morphism set $\Hom_{\calP ic^\bbZ}((\ell_1,n_1),(\ell_2,n_2)$ is
empty unless $n_1=n_2$, and in this case, it is just
$\Hom_k(\ell_1,\ell_2) \setminus 0$. Observe that as a groupoid,
$\calP ic^\bbZ$ is not connected. In fact $\pi_0(\calP)\simeq\bbZ$.
The tensor product $\calP ic^\bbZ\times\calP ic^\bbZ\to\calP
ic^\bbZ$ is given as
\[(\ell_1,n_1)\otimes(\ell_2,n_2)\mapsto (\ell_1\otimes\ell_2,n_1+n_2) \mbox{.}\]
There is a natural associativity constraint that makes $\calP
ic^\bbZ$ a monoidal groupoid.
\begin{rmk}
For the Picard groupoids $\calP ic$ and $\calP ic^\bbZ$, we will
often use in this article the usual notation $"\otimes"$ for
monoidal structures in these categories, although for a general
Picard groupoid we denoted it as "+".
\end{rmk}
 We note that the commutativity constraint in category $\calP ic^\bbZ$
is the interesting one. Namely,
\[c_{\ell_1,\ell_2}:(\ell_1\otimes\ell_2,n_1+n_2)\simeq(\ell_2\otimes\ell_1,n_2+n_1),\ \
c_{\ell_1,\ell_2}(v\otimes w)=(-1)^{n_1n_2}w\otimes v.\]

Of course, there is another commutativity constraint on the category
of graded lines given by $c(v\otimes w)=w\otimes v$. Then as a
Picard groupoid with this naive commutativity constraints, it is
just the strictly commutative Picard groupoid $\calP ic\times\bbZ$.
There is a natural monoidal equivalence $\calP ic^\bbZ\simeq\calP
ic\times\bbZ$, but this equivalence is NOT symmetric monoidal (i.e.
a $1$-homomorphism of Picard groupoids). We denote by
\[F_{\calP ic}:\calP ic^\bbZ  \to \calP ic\]
the natural monoidal functor.

The importance of $\calP ic^\bbZ$  lies in the following
observation.  Let us make the following convention.

\medskip

\noindent\bf Convention. \rm For any category $\calC$ we denote by
$(\calC, \on{isom})$ a category with the same objects as in the
category $\calC$, and morphisms in the category $(\calC, \on{isom})$
are the isomorphisms in the category $\calC$.

\medskip

Now let $\Tate_0$ be the category of finite dimensional vector
spaces over a field $k$. The categories $\Tate_0$ and $(\Tate_0,
\on{isom})$ are symmetric monoidal categories under the direct sum.
The commutativity constraints in the categories $\Tate_0$ and
$(\Tate_0, \on{isom})$  are defined in the natural way. Namely, the
map $c_{V,W}: V\oplus W\to W\oplus V$ is given by
$c_{V,W}(v,w)=(w,v)$. Then there is a natural symmetric monoidal
functor
\begin{equation}\label{det}
\det: (\Tate_0, \on{isom}) \to\calP ic^\bbZ,
\end{equation}
which assigns to every $V$ its top exterior power and the grading
$\dim V$, i.e. the dimension of the vector space $V$ over the field
$k$. Observe, however, that the functor $F_{\calP
ic}\circ\det:(\Tate_0,\on{isom})\to\calP ic$ is NOT symmetric
monoidal.
\end{ex}

It is a folklore theorem that the category of Picard groupoid (not
necessarily strict commutative) is equivalent to the category of
spectra whose only non-vanishing homotopy groups are $\pi_0$ and
$\pi_1$\footnote{Indeed, consider the geometrization of the nerve of $\calP$.
Then the Picard structure of $\calP$ puts an $E_\infty$-structure on this space.}. For example,
 $\calP ic^\bbZ$ should correspond to the truncation
$\tau_{\leq 1}\calK$, where $\calK$ is the spectra of algebraic
$K$-theory of $k$.

\subsection{$\calP$-torsors} \label{Ptorsors}
Let $\calP$ be a Picard groupoid. Recall (see
also~\cite[Appendix~A6]{BBE} and~\cite[\S~5.1]{Dr}) that a
$\calP$-torsor $\calL$ is a module category over $\calP$, i.e.,
there is a bifunctor
\[+:\calP\times\calL\to\calL\] together with natural isomorphisms
\[a_{x,y,v}: (x+y)+ v\simeq x+ (y+ v), \ \ x,y\in\calP, v\in\calL,\]
satisfying
\begin{enumerate}
\item[(i)] the pentagon axiom, i.e. a diagram similar to \eqref{pentagon} holds;
\item[(ii)] for any $x\in\calP$, the functor from $\calL$ to $\calL$ given by $v\mapsto x+ v$ is an equivalence;
\item[(iii)] for any $v\in\calL$, the functor
from $\calP$ to $\calL$ given by $x\mapsto x+ v$ is an equivalence of categories.
\end{enumerate}
It is clear that we can verify the  condition~(ii) of this
definition  only for the unit object $e $ of $\calP$.

For any $v\in\calL$, there is a unique isomorphism $e+v\simeq v$ such that the following diagram is commutative
\[\xymatrix{(e+e)+v\ar[rr]\ar[dr]&&e+(e+v)\ar[dl]\\
                   &e+v&}.\]

If $\calL_1,\calL_2$ are $\calP$-torsors, then $\Hom_\calP(\calL_1,\calL_2)$ is the category defined as follows.
Objects are 1-isomorphisms, i.e. equivalences $F:\calL_1\to\calL_2$ together with
isomorphisms $\lambda:F(x+v)\simeq x+F(v)$ such that the following diagram is commutative
\begin{equation}\label{la}\xymatrix{F((x+y)+v)\ar[r]\ar[d]&(x+y)+F(v)\ar[d]\\
F(x+(y+v))\ar[r]&x+(y+F(v))
}\end{equation}
Morphisms are natural transformations $\theta:F_1\to F_2$ such that the following diagram is commutative
\[\xymatrix{F_1(x+v)\ar[r]\ar_\theta[d]&x+F_1(v)\ar^\theta[d]\\
F_2(x+v)\ar[r]&x+F_2(v)}\]

From discussions above it follows that  all $\calP$-torsors form a
$2$-category, denoted by $B\calP$. We will choose, once and for all,
for any $\calP$-torsors $\calL_1,\calL_2$ and
$F\in\Hom_{\calP}(\calL_1,\calL_2)$, a quasi-inverse $F^{-1}$ of $F$
together with an isomorphism $F^{-1}F\simeq\on{id}$.

Moreover, $B\calP$ is a category enriched over itself. That is, for
any $\calP$-torsors $\calL_1,\calL_2$ the category
$\Hom_\calP(\calL_1,\calL_2)$ is again a $\calP$-torsor, where an
action of $\calP$ on $\Hom_\calP(\calL_1,\calL_2)$ is defined as
follows: for any $z \in \calP$, $v \in \calL_1$, $F \in
\Hom_\calP(\calL_1,\calL_2)$ we put $z+F \in
\Hom_\calP(\calL_1,\calL_2)$ as $(z+F)(v):=z+F(v)$. Now the
isomorphism $\lambda$ for the equivalence $z +F$ is defined by means
of the braiding maps $c$ in $\calP$ (commutativity constraints from
section~\ref{picgr}). Then the diagram~\eqref{la} for the
equivalence $z+F$ follows from hexagon diagram~\eqref{hexagon}. It
is clear that this definition is extended to the definition of a
bifunctor \begin{equation}\label{bi}+ : \calP \times
\Hom_\calP(\calL_1,\calL_2) \to
\Hom_\calP(\calL_1,\calL_2)\end{equation} such that the axioms  of
$\calP$-torsor are satisfied (see the beginning of this section).

We note that to prove that the category $B\calP$ is enriched over
itself we used the commutativity constraints in $\calP$. The
commutativity constraints will be important also below to define the
sum of two $\calP$-torsors.

 The category $B\calP$ furthermore forms a Picard
$2$-groupoid. We will not make the definition of Picard
$2$-groupoids precise. (However, one refers to \cite{KV,Br} for
details). We will only describe the Picard structure on  $B\calP$ in
the way we need.

First, if $\calL_1,\calL_2$ are two $\calP$-torsors, then
$\calL_1+\calL_2$ is defined to be the category whose objects are
pairs $(v,w)$, where $v\in\calL_1$ and $w\in\calL_2$. The morphisms
from $(v,w)$ to $(v',w')$ are defined as the equivalence classes of
triples $(x,\varphi_1,\varphi_2)$, where $x\in\calP$, $\varphi_1\in
\Hom_{\calL_1}(v,x+v')$ and $\varphi_2\in\Hom_{\calL_2}(x+w,w')$,
and $(x,\varphi_1,\varphi_2)\sim(y,\phi_1,\phi_2)$ if there exists a
map $f:x\to y$ such that $\phi_1=f(\varphi_1)$ and
$\varphi_2=f(\phi_2)$. The identity in
$\Hom_{\calL_1+\calL_2}((v,w),(v,w))$ and the composition
\[\Hom_{\calL_1+\calL_2}((v,w),(v',w'))\times
\Hom_{\calL_1+\calL_2}((v',w'),(v'',w''))\to\Hom_{\calL_1+\calL_2}((v,w),(v'',w''))\]
are clear. (To define the composition we have to use the
commutativity constraints in $B \calP$.) So $\calL_1+\calL_2$ is a
category. Define the action of $\calP$ on $\calL_1+\calL_2$ as
$x+(v,w):=(x+v,w)$. The natural isomorphism $(x+y)+(v,w)\simeq
x+(y+(v,w))$ is the obvious one. It is easy to check that
$\calL_1+\calL_2$ is a $\calP$-torsor.

There is an obvious $1$-isomorphism of $\calP$-torsors
\[A:(\calL_1+\calL_2)+\calL_3\simeq\calL_1+(\calL_2+\calL_3) \mbox{,}\] which is the associativity constraint.
Namely, objects in
$(\calL_1+\calL_2)+\calL_3$ and in $\calL_1+(\calL_2+\calL_3)$ are
both canonically bijective to triples $(v_1,v_2,v_3)$ where
$v_i\in\calL_i$. Then $A$ is identity on objects. A morphism from
$(v_1,v_2,v_3)$ to $(w_1,w_2,w_3)$ in $(\calL_1+\calL_2)+\calL_3$ is
of the form $(x,(y,\varphi_1,\varphi_2),\varphi_3)$, where
$x,y\in\calP$, $\varphi_1:v_1\to y+(x+w_1)$, $\varphi_2:y+v_2\to
w_2$, $\varphi_3:x+v_3\to w_3$. Then $A$ maps
$(x,(y,\varphi_1,\varphi_2),\varphi_3)$ to
$(x+y,\varphi'_1,(x,\varphi'_2,\varphi'_3))$, where
$\varphi'_1:v_1\to (x+y)+w_1$ coming from
$v_1\stackrel{\varphi_1}{\to}y+(x+w_1)\simeq(y+x)+w_1\simeq(x+y)+w_1$,
$\varphi'_2:(x+y)+v_2\to x+w_2$ coming from $(x+y)+v_2\simeq
x+(y+v_2)\stackrel{x+\varphi_2}{\to}x+w_2$ and $\varphi'_3:x+v_3\to
w_3$ is the same as $\varphi_3$.

To complete the definition of $A$, we should specify for every
$x\in\calP, (v_1,v_2,v_3)\in (\calL_1+\calL_2)+\calL_3$, an
isomorphism $\lambda:A(x+(v_1,v_2,v_3))\simeq x+A(v_1,v_2,v_3)$ such
that the diagram \eqref{la} is commutative for $F =A$. It is clear
that $\lambda=\on{id}:(x+v_1,v_2,v_3)=(x+v_1,v_2,v_3)$ will suffice
for this purpose.

It is clear from definition of $A$ that we can similarly construct a
$1$-morphism $A^{-1}$ of $\calP$-torsors such that the following
equalities are satisfied:
$$
A^{-1}A =A A^{-1} = \on{id} \mbox{.}
$$

From above construction of  the associativity constraints
($1$-morphisms $A$ and $A^{-1}$) it follows that for any
$\calP$-torsors $\calL_1, \calL_2, \calL_3, \calL_4$ the following
diagram of $1$-morphisms (pentagon diagram) is commutative
\begin{equation}\label{pentagon2}\xymatrix{&(\calL_1+\calL_2)+(\calL_3+\calL_4)\ar[dl]\ar[dr]&\\
\calL_1+(\calL_2+(\calL_3+\calL_4))\ar[d]&&((\calL_1+\calL_2)+\calL_3)+\calL_4\ar[d]\\
\calL_1+((\calL_2+\calL_3)+\calL_4)\ar[rr]&&(\calL_1+(\calL_2+\calL_3))+\calL_4}\end{equation}
(To prove this diagram we note that this diagram is evident for
objects from category $(\calL_1+\calL_2)+(\calL_3+\calL_4)$. To
verify this diagram for morphisms from this category one needs to
make some non-complicated routine calculations. The analogous
reasonings are also applied to the diagram~\eqref{hexagon2} below.)

The following axioms  are satisfied in the category $B\calP$ and
describe the functoriality of the associativity constraints. Let
$\calL_1, \calL_2, \calL_3, \calL_1'$ be any $\calP$-torsors, and
$\calL_1 \to \calL_1'$ be any $1$-morphism of $\calP$-torsors, then
the following diagram of $1$-morphisms is commutative
\begin{equation} \label{axas1}
\xymatrix{
 (\calL_{1} + \calL_{2}) + \calL_{3} \ar[r]  \ar[d] & (\calL_{1}' + \calL_{2}) + \calL_{3} \ar[d] \\
 \calL_{1} + (\calL_{2} + \calL_{3}) \ar[r] & \calL_{1}' + (\calL_{2}+ \calL_{3}) \mbox{.}
 }
 \end{equation}
 Let $\calL_1, \calL_2, \calL_3, \calL_2'$ be any $\calP$-torsors, and $\calL_2 \to \calL_2'$ be any $1$-morphism of $\calP$-torsors, then the following diagram of $1$-morphisms is commutative
 \begin{equation} \label{axas2}
 \xymatrix{
 (\calL_{1} + \calL_{2}) + \calL_{3} \ar[r]  \ar[d] & (\calL_{1} + \calL_{2}' ) + \calL_{3} \ar[d] \\
 \calL_{1} + (\calL_{2} + \calL_{3}) \ar[r] & \calL_{1} + (\calL_{2}' +  \calL_{3}) \mbox{.}
 }
 \end{equation}
 Let $\calL_1, \calL_2, \calL_3, \calL_3'$ be any $\calP$-torsors, and $\calL_3 \to \calL_3'$ be any $1$-morphism of $\calP$-torsors, then the following diagram of $1$-morphisms is commutative
\begin{equation} \label{axas3}
 \xymatrix{
 (\calL_{1} + \calL_{2}) + \calL_{3} \ar[r]  \ar[d] & (\calL_{1} + \calL_{2}) + \calL_{3}'  \ar[d] \\
 \calL_{1} + (\calL_{2} + \calL_{3}) \ar[r] & \calL_{1} + (\calL_{2} + \calL_{3}') \mbox{.}
 }
 \end{equation}
(In  diagrams~\eqref{axas1}-\eqref{axas3} the vertical arrows are the associativity constraints.)

Next we define the commutativity constraints. Recall that we have chosen for each $x\in\calP$ its
inverse $(-x,\phi_x)$, and then obtained the isomorphism \eqref{adjun}. This gives an
obvious $1$-isomorphism
\[C:\calL_1+\calL_2\simeq\calL_2+\calL_1 \mbox{.}\]
 Namely, $C$ will map the object $(v_1,v_2)$ to $(v_2,v_1)$,
and $(x,\varphi_1,\varphi_2):(v_1,v_2)\to(w_1,w_2)$ to $(-x,\varphi'_1,\varphi'_2):(v_2,v_1)\to(w_2,w_1)$,
where $\varphi'_1:v_2\simeq e+v_2\simeq(-x+x)+v_2\simeq-x+(x+v_2)\stackrel{-x+\varphi_2}{\to}-x+w_2$ and
$\varphi'_2:-x+v_1\stackrel{-x+\varphi_1}{\to}-x+(x+w_1)\simeq(-x+x)+w_1\simeq e+w_1\simeq w_1$.

We also define for each $x\in\calP$, $(v_1,v_2)\in\calL_1+\calL_2$, the
isomorphism $\lambda:C(x+(v_1,v_2))=(v_2,x+v_1)\to x+C(v_1,v_2)=(x+v_2,v_1)$
as $\lambda=(-x,\varphi_1,\varphi_2)$, where $\varphi_1:v_2\simeq (-x+x)+v_2\simeq -x+(x+v_2)$
and $\varphi_2:-x+(x+v_1)\simeq(-x+x)+v_1\simeq v_1$.

In addition, by \eqref{inverse}, there is an equality of $1$-morphisms
 $C^2 = \on{id}$.

The commutativity constrains together with the associativity constrains satisfy the  hexagon diagram, i.e. for any $\calP$-torsors
$\calL_1, \calL_2, \calL_3$ the following diagram of
$1$-morphisms  is commutative
\begin{equation}\label{hexagon2}\xymatrix{&(\calL_1+\calL_2)+\calL_3\ar[dl]\ar[dr]& \\
(\calL_1+\calL_2)+\calL_3\ar[d]&&\calL_1+(\calL_2+\calL_3)\ar[d]\\
\calL_1+(\calL_2+\calL_3)\ar[dr]&&\calL_1+(\calL_2+\calL_3)\ar[dl]\\
&(\calL_1+\calL_2)+\calL_3&}
\end{equation}

The following axiom  is satisfied in the category $B\calP$ and
describes the functoriality of the commutativity constraints. Let
$\calL_1, \calL_2, \calL_1'$ be any $\calP$-torsors, and $\calL_1
\to \calL_1'$ be any $1$-morphism of $\calP$-torsors, then the
following diagram of $1$-morphisms is commutative
\begin{equation} \label{axcom}
\xymatrix{
 \calL_{1} + \calL_{2}  \ar[r]  \ar[d] & \calL_{1}' + \calL_{2}  \ar[d] \\
 \calL_{2} + \calL_{1}  \ar[r] & \calL_{2} + \calL_{1}' \mbox{,}
 }
 \end{equation}
where the vertical arrows are the commutativity constraints.

By regarding $\calP$ as a $\calP$-torsor, there is a canonical
$1$-isomorphism of $\calP$-torsors $\calP+\calL\to\calL,
(x,v)\mapsto x+v$ satisfying the associativity and commutativity
constraints. This means that $\calP$ is the unit in $B\calP$. For
each $\calL\in\ B\calP$, we have an object
\[-\calL:= \Hom_\calP(\calL,\calP),\]
together with a natural 1-isomorphism of $\calP$-torsors
$\varphi_\calL:\calL+(-\calL)\simeq\calP$. This object is called an
inverse of $\calL$.

For $\calL$ a $\calP$-torsor, $\Hom_\calP(\calL,\calL)$ is a natural
monoidal groupoid (by composition). The natural homomorphism
\begin{equation}\label{zz}\calZ:\calP\to\Hom_\calP(\calL,\calL)\end{equation} given by
$\calZ(z)=z+\id$\footnote{Recall that we constructed the bifunctor
$+:\calP\times\Hom_\calP(\calL,\calL)\to\Hom_\calP(\calL,\calL)$ in
\eqref{bi}.} is a $1$-isomorphism of monoidal groupoids. We will fix
once and for all its inverse, i.e., we choose an $1$-isomorphism of
monoidal groupoids
\begin{equation} \label{zzz}
\calZ^{-1}:\Hom_\calP(\calL,\calL) \to \calP
\end{equation}
 together with a
$2$-isomorphism $\calZ^{-1}\circ\calZ\simeq\on{id}$.

\begin{rmk}
We constructed some "semistrict" version of Picard $2$-groupoid, because diagrams~\eqref{pentagon2}-\eqref{axcom}   are true in $B\calP$ for $1$-morphisms without consideration of additional $2$-morphisms which involve higher coherence axioms for braided monoidal $2$-categories as in~\cite{KV} and~\cite{BN}. Besides,
 from the equality $C^2 = \on{id}$ we  obtain at one stroke  that our 2-category $B\calP$ is strongly braided, i.e, the diagram (8.4.6) in \cite{Br} pp. 149 holds.
Let us mention that in \emph{loc. cit.}, the commutativity constraint $C$ is denoted by $R$.
\end{rmk}

\subsection{The case $H^1(BG,\calP)$.}
\label{hompic}
 Let $\calP$ be a Picard groupoid, and $G$ be a group.
Then we define $H^1(BG,\calP)$ to be the Picard groupoid of homomorphisms from $G$ to $\calP$.
That is, the objects are monoidal functors from $G$ to $\calP$, where $G$ is regarded as a
discrete monoidal category (i.e. the  monoidal groupoid, where objects are elements of the group $G$,
 and morphisms in this groupoid are only the unit morphisms of objects), and morphisms between these monoidal
 functors are monoidal natural transformations. In concrete terms, $f\in H^1(BG,\calP)$ is a functor $f:G\to\calP$,
 together with isomorphisms
\[f(gg')\simeq f(g)+f(g')\]
which are compatible with the associativity constraints. The
monoidal structure on $H^1(BG,\calP)$ is given by
$(f+f')(g)=f(g)+f(g')$. The natural isomorphism $(f+f')(gg')\simeq
(f+f')(g)+(f+f')(g')$ is  the obvious one. The associativity
constraints and the commutativity constraints on $H^1(BG,\calP)$ are
clear. Let $(e,\varphi)$ be a unit of $\calP$, and $e$ is regarded
as a discrete Picard groupoid with one object. Then $f:G\to\calP$ is
called trivial if it is isomorphic to $G\to e\to \calP$.

\begin{ex}If $\calP=BA$, then $H^1(BG,BA)$ is equivalent to the category of central extensions of $G$ by $A$ as Picard groupoids.
\end{ex}

Let $Z_2\subset G\times G$ be the subset of commuting elements, so
that if $G$ itself is an abelian group, then $Z_2=G\times G$. In
general, fix $g\in G$, then $Z_2\cap (G\times g)\simeq Z_2 \cap
(g\times G)\simeq Z_G(g)$, the centralizer of $g$ in $G$.

\begin{lem-def}\label{comm}There is a well defined anti-symmetric bimultiplicative map
$\on{Comm}(f):Z_2\to\pi_1(\calP)=\End_\calP(e)$.
\end{lem-def}
\begin{proof} The definition of $\on{Comm}(f)$
is as follows. For $g_1,g_2\in Z_2$, we have
\[f(g_1g_2)\simeq f(g_1)+f(g_2)\simeq f(g_2)+f(g_1)\simeq f(g_2g_1)=f(g_1g_2),\]
where the first and the third isomorphisms come from the
constraints for the homomorphism $f$, and the second isomorphism comes from the commutativity
constraints of the Picard groupoid $\calP$. We thus obtain an
element
\[\on{Comm}(f)(g_1,g_2)\in\Aut_\calP(f(g_1g_2))\simeq\pi_1(\calP).\]
Since $\calP$ is Picard, i.e., the commutativity constraints satisfy
$c_{f(g_1),f(g_2)}=c^{-1}_{f(g_2),f(g_1)}$, the map $\on{Comm}$ is
anti-symmetric. One can check directly by diagram that
$\on{Comm}(f)$ is also bimulitplicative (see the analogous
diagram~\eqref{constrC2} below).

Here we will give another proof of bimultiplicativity whose higher
categorical analogue we will use  in the proof of
lemma-definition~\ref{C2}. We construct the following category
$H_f$, where objects of $H_f$ are all possible expressions
\[f(g_1)+ \cdots + f(g_k):=(\cdots(f(g_1)+f(g_2))+f(g_3))+\cdots)+f(g_k), \mbox{ where }g_i \in G,\]
and morphisms in $H_f$ are defined as following:
\begin{multline*}
\Hom_{H_f} (f(g_{i_1}) + \ldots + f(g_{i_k}) \, , \, f(g_{j_1}) + \ldots + f(g_{j_l}))= \\
\left\{\begin{array}{ll} \emptyset & \mbox{if} \quad g_{i_1} \ldots g_{i_k} \ne
g_{j_1} \ldots g_{j_l} \mbox{;}\\
\Hom_{\calP} (f(g_{i_1}) + \ldots + f(g_{i_k}) \, , \, f(g_{j_1}) + \ldots + f(g_{j_l}))
&\mbox{if} \quad g_{i_1} \ldots g_{i_k} =
g_{j_1} \ldots g_{j_l} \mbox{.}
\end{array}\right.
\end{multline*}

The category $H_f$ is a monoidal group-like groupoid (or
$gr$-category), where the monoidal structure on $H_f$ is given in an
obvious way by using the associativity constraints in the category
$\calP$. We have $\pi_0 (H_f) = G$,  and $H_f$ is equivalent to the
trivial $gr$-category. We consider $\pi_1(\calP)$-torsor $E$ over
 $Z_2$ which is the commutator of $H_f$ (see~\cite[\S3]{Br2}). The fibre of $E$ over $(g_1,g_2) \in Z_2 $ is the set
 \[E_{g_1,g_2} = \Hom_{H_f} (f(g_1) + f(g_2), f(g_2) + f(g_1)).\] The $\pi_1(\calP)$-torsor $E$ has a natural
 structure of a weak biextension of $Z_2$
 by $\pi_1(\calP)$ (see~\cite[prop. 3.1]{Br2}), i.e. there are partial composition laws on $E$ which are compatible
 (see also \eqref{comp}). Now the commutativity constraints $c_{f(g_1),f(g_2)}$ give a section of $E$ over
 $Z_2$ which  is compatible with partial composition laws on $E$, i.e. "bimultiplicative".
 (The compatibility of this section with the composition laws follows at once
 from the
 definition of the partial composition laws on $E$ and the hexagon diagram~\eqref{hexagon}.) The other section of $E$
 which is compatible with partial composition laws on $E$ is obtained as the composition of  following
  two morphisms from
  definition of $f$:
 $f(g_1) +f(g_2) \simeq f(g_1g_2) = f(g_2g_1) \simeq f(g_2) +f(g_1) $. (The compatibility of this section
 with composition laws follows  from diagrams (3.10) and (1.4) of~\cite{Br2}, because of the compatibility of
 our homomorphism $f$ with the associativity constraints.) Now the difference between the first section and the
 second section coincides with  $\Comm(f)$, which is, thus, a bimultiplicative function, because both
 sections are "bimultiplicative".
\end{proof}

\begin{rmk} \label{bext}
In~\cite[\S2]{Br2} the notion of a weak biextension was intoduced
only for $Z_2=B\times B$ where $B$ is an abelian group. Here, we
generalize this notion by allowing $B$ to be non-commutative and by
replacing $B \times B$ by $Z_2$. But all the axioms for partial
composition laws in \emph{loc. cit.} are still applicable in this
setting. The same remark applies when we talk about
"$(2,2)$-extension" in \S~\ref{sec2}.
\end{rmk}

\begin{rmk} \label{2.5}
It is clear that if $f\simeq f'$ in $H^1(BG,\calP)$, then
$\Comm(f)=\Comm(f')$.
\end{rmk}
\begin{rmk} \label{usual}
 When $\calP=BA$, this construction reduces to the usual construction of inverse to the
 commutator pairing maps for central extensions.
\end{rmk}

\begin{cor}\label{Add}One has
$\on{Comm}(f+f')=\on{Comm}(f)+\on{Comm}(f')$.
\end{cor}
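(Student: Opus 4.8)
The plan is to unwind the definition of $\Comm$ on both sides and reduce the claim to a single coherence computation in $\calP$, followed by the additivity of the canonical identification $\Aut_\calP(x)\simeq\pi_1(\calP)$. Fix commuting elements $g_1,g_2\in Z_2$ and write $a_i=f(g_i)$, $b_i=f'(g_i)$, so that $(f+f')(g_i)=a_i+b_i$. By the definition of the sum in \S\ref{hompic}, the constraint isomorphism $(f+f')(g_1g_2)\simeq (f+f')(g_1)+(f+f')(g_2)$ is the ``obvious'' one: it is the separate constraints $\mu^f,\mu^{f'}$ of $f$ and $f'$ followed by the canonical middle-four-interchange rearrangement $R\colon (a_1+a_2)+(b_1+b_2)\simeq (a_1+b_1)+(a_2+b_2)$ built (as in \S\ref{picgr}) from the associativity and commutativity constraints of $\calP$ (it swaps $a_2$ and $b_1$). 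Abbreviating $\nu=\mu^f_{g_1,g_2}+\mu^{f'}_{g_1,g_2}$ and $\nu'=\mu^f_{g_2,g_1}+\mu^{f'}_{g_2,g_1}$, the element $\Comm(f+f')(g_1,g_2)\in\Aut_\calP\bigl((f+f')(g_1g_2)\bigr)$ is the composite
\[
f(g_1g_2)+f'(g_1g_2)\xrightarrow{\ R\circ\nu\ }(a_1+b_1)+(a_2+b_2)\xrightarrow{\ c\ }(a_2+b_2)+(a_1+b_1)\xrightarrow{\ (R'\circ\nu')^{-1}\ }f(g_1g_2)+f'(g_1g_2),
\]
where $c=c_{a_1+b_1,\,a_2+b_2}$ and $R'$ is the rearrangement for $(g_2,g_1)$.

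First I would isolate the purely structural middle portion $(R')^{-1}\circ c\circ R\colon (a_1+a_2)+(b_1+b_2)\to (a_2+a_1)+(b_2+b_1)$ and claim it equals $c_{a_1,a_2}+c_{b_1,b_2}$. Both are composites of associativity and commutativity constraints, and tracing the underlying letters shows both realize the same permutation of the four formally distinct objects $a_1,a_2,b_1,b_2$, namely $a_1\leftrightarrow a_2$ together with $b_1\leftrightarrow b_2$; the cross-swaps $c_{a_2,b_1}$ (inside $R$) and $c_{a_1,b_2}$ (inside $R'$) have opposite effect and cancel. By Mac Lane's coherence theorem for symmetric monoidal categories two such structural isomorphisms with equal permutation coincide. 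This reduces $\Comm(f+f')(g_1,g_2)$ to
\[
(\nu')^{-1}\circ\bigl(c_{a_1,a_2}+c_{b_1,b_2}\bigr)\circ\nu.
\]

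Next I would use the bifunctoriality of $+\colon\calP\times\calP\to\calP$ to distribute the composition over the sum, rewriting this as
\[
\bigl[(\mu^f_{g_2,g_1})^{-1}\circ c_{a_1,a_2}\circ\mu^f_{g_1,g_2}\bigr]+\bigl[(\mu^{f'}_{g_2,g_1})^{-1}\circ c_{b_1,b_2}\circ\mu^{f'}_{g_1,g_2}\bigr],
\]
that is, $\Comm(f)(g_1,g_2)+\Comm(f')(g_1,g_2)$, now viewed as the $+$-sum of an automorphism of $f(g_1g_2)$ with an automorphism of $f'(g_1g_2)$. Finally I would invoke the additivity of the canonical isomorphism $\Aut_\calP(x)\simeq\pi_1(\calP)$: for $\alpha\in\Aut_\calP(x)$ and $\beta\in\Aut_\calP(y)$ the automorphism $\alpha+\beta\in\Aut_\calP(x+y)$ maps to the sum of the classes of $\alpha$ and $\beta$ in $\pi_1(\calP)$, again by bifunctoriality of $+$ and the construction of the trivialization. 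Applying this with $x=f(g_1g_2)$ and $y=f'(g_1g_2)$ yields $\Comm(f+f')=\Comm(f)+\Comm(f')$ in $\pi_1(\calP)$.

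The main obstacle I anticipate is the coherence step identifying $(R')^{-1}\circ c\circ R$ with $c_{a_1,a_2}+c_{b_1,b_2}$: one must be certain that the cross-term braidings genuinely cancel and that appealing to symmetric-monoidal coherence is legitimate for a \emph{non}-strictly commutative $\calP$. The safeguard is precisely that $a_1,a_2,b_1,b_2$ are distinct symbols, so no self-braiding $c_{x,x}$ ever enters and the delicate multiplier of \S\ref{strict vs non-strict} is irrelevant; the hexagon \eqref{hexagon} and pentagon \eqref{pentagon} axioms then pin down the common value. Everything else is routine unwinding of the definition of the sum in $H^1(BG,\calP)$ and of bifunctoriality of $+$.
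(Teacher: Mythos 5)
Your proof is correct and is essentially the argument the paper intends: the paper's proof of this corollary is a one-line reference to the ``analogous formulas and diagrams~\eqref{baer}--\eqref{baer2}'', and those diagrams carry out exactly your unwinding --- the constraint for $f+f'$ is the middle-four interchange composed with the separate constraints, and the structural middle portion collapses to $c_{f(g_1),f(g_2)}+c_{f'(g_1),f'(g_2)}$, after which bifunctoriality of $+$ finishes the computation. The only difference is cosmetic: where you invoke Mac Lane coherence for symmetric monoidal categories on four formally distinct letters (correctly observing that this needs only the symmetry axiom $c_{y,x}c_{x,y}=\id$ and not strict commutativity, so the $\calP ic^{\bbZ}$-type sign subtleties never enter), the paper's analogous diagrams establish the same identity by explicit use of the naturality of the commutativity constraint and the hexagon axiom, and your explicit final step identifying $\alpha+\beta\in\Aut_\calP(x+y)$ with the sum of the classes of $\alpha,\beta$ in $\pi_1(\calP)$ is a point the paper leaves implicit.
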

\begin{proof}
It can be easily checked directly by diagrams. See, for example,
analogous formulas and diagrams~\eqref{baer}-\eqref{baer2} below.
\end{proof}
\begin{cor} \label{2.8}
Assume that $G$ is abelian so that $Z_2=G\times G$. Then
$\on{Comm}(f)$ is trivial if and only if the 1-homomorphism $f$ is a
1-homomorphism of Picard groupoids. In particular, if the
homomorphism $f$ is trivial, then $\on{Comm}(f)$ is trivial.
\end{cor}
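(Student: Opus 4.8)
The plan is to unwind both conditions explicitly in terms of the structure isomorphisms of $f$ and to observe that they literally coincide. Write $\alpha_{g,g'}\colon f(gg')\simeq f(g)+f(g')$ for the constraint isomorphisms that are part of the data of the object $f\in H^1(BG,\calP)$. By definition, $f$ is a $1$-homomorphism of Picard groupoids precisely when, in addition to being compatible with the associativity constraints, it is compatible with the commutativity constraints, i.e. when diagram~\eqref{dz} commutes for all pairs $g_1,g_2\in G$. Since $G$ is abelian and is regarded as a discrete monoidal groupoid, its commutativity constraint $c_{g_1,g_2}\colon g_1g_2\to g_2g_1$ is forced to be the identity morphism of the object $g_1g_2=g_2g_1$ (the only morphisms of $G$ are identities), and hence the left vertical arrow of~\eqref{dz} is $\id_{f(g_1g_2)}$. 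Thus~\eqref{dz} asserts exactly the equality $\alpha_{g_2,g_1}=c_{f(g_1),f(g_2)}\circ\alpha_{g_1,g_2}$ of isomorphisms $f(g_1g_2)\to f(g_2)+f(g_1)$.

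First I would rewrite $\on{Comm}(f)$ in these terms. Directly from its definition in Lemma-Definition~\ref{comm}, the automorphism $\on{Comm}(f)(g_1,g_2)\in\Aut_\calP(f(g_1g_2))\simeq\pi_1(\calP)$ is the composite
\[f(g_1g_2)\xrightarrow{\alpha_{g_1,g_2}}f(g_1)+f(g_2)\xrightarrow{c_{f(g_1),f(g_2)}}f(g_2)+f(g_1)\xrightarrow{\alpha_{g_2,g_1}^{-1}}f(g_2g_1)=f(g_1g_2),\]
that is, $\on{Comm}(f)(g_1,g_2)=\alpha_{g_2,g_1}^{-1}\circ c_{f(g_1),f(g_2)}\circ\alpha_{g_1,g_2}$. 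The key step is then immediate: under the canonical identification $\Aut_\calP(f(g_1g_2))\simeq\pi_1(\calP)$, which sends the identity automorphism to the neutral element, $\on{Comm}(f)(g_1,g_2)$ is trivial if and only if it equals $\id_{f(g_1g_2)}$, i.e. if and only if $c_{f(g_1),f(g_2)}\circ\alpha_{g_1,g_2}=\alpha_{g_2,g_1}$. As observed above, this is exactly the commutativity of~\eqref{dz} for the pair $(g_1,g_2)$. Hence $\on{Comm}(f)$ is trivial as a map $G\times G\to\pi_1(\calP)$ if and only if~\eqref{dz} holds for every pair, which is precisely the statement that $f$ is a $1$-homomorphism of Picard groupoids.

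For the last assertion I would argue that a trivial $f$ is, by definition, isomorphic to the composite $G\to e\to\calP$; the latter has canonical constraint isomorphisms that are manifestly compatible with the commutativity constraints, so it is a $1$-homomorphism of Picard groupoids and therefore has trivial $\on{Comm}$ by the equivalence just proved. Remark~\ref{2.5} then transports this conclusion to $f$. I do not expect any serious obstacle: the proof is essentially a tautology once the definitions are aligned, and the only points demanding care are the identification of the source commutativity constraint of $G$ with the identity (which uses that $G$ is abelian and discrete) and the verification that the three arrows in the defining composite of $\on{Comm}(f)$ match term-by-term with the four edges of the square~\eqref{dz}.
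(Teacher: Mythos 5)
Your proof is correct and takes essentially the same route as the paper, whose entire proof is the one-line remark that the statement ``follows from diagram~\eqref{dz}''. Your write-up simply makes that citation explicit: since $G$ is abelian and discrete its commutativity constraint is the identity, so $\on{Comm}(f)(g_1,g_2)=\alpha_{g_2,g_1}^{-1}\circ c_{f(g_1),f(g_2)}\circ\alpha_{g_1,g_2}$ is trivial for all pairs exactly when~\eqref{dz} commutes for all pairs, with the last assertion handled via Remark~\ref{2.5} just as the paper intends.
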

\begin{proof} It follows from diagram~\eqref{dz}.
\end{proof}
The above two corollaries together can be rephrased as by saying
that if $G$ is abelian, then there is an exact sequence of Picard
groupoids
\[1\to\Hom(G,\calP)\to H^1(BG,\calP)\to \Hom(\wedge^2G,\pi_1(\calP)).\]

\subsection{The case $H^2(BG,\calP)$}   \label{sec2}
If $\calP'$ is a Picard $n$-groupoid, and $G$ is a group, one should
be able to define $H^1(BG,\calP')$ as the Picard $n$-groupoid of
homomorphisms from $G$ to $\calP'$. When $n=1$, this is what we
discussed in the previous subsection. The next step for
consideration is $n=2$. Again, instead of discussing general Picard
2-groupoids, we will focus on the case when $\calP'=B\calP$, where
$\calP$ is a Picard groupoid. Then one can interpret
$H^1(BG,B\calP)$ as the Picard groupoid\footnote{As we just
mentioned, it is in fact a Picard $2$-groupoid.} of central
extensions of the group $G$ by the Picard groupoid $\calP$. For this
reason, we also denote $H^1(BG,B\calP)$ by $H^2(BG,\calP)$.

In concrete terms, an object $\calL$ in $H^2(BG,\calP)$ is a rule to
assign to every $g\in G$ a $\calP$-torsor $\calL_g$, and to every
$g,g'$ an equivalence $\calL_{gg'}\simeq \calL_g+\calL_{g'}$ of
$\calP$-torsors, and to every $g,g',g''$ an isomorphism between two
equivalences
\begin{equation}\label{assoc}
\xymatrix@C=20pt@R=20pt{&\calL_{gg'g''}\ar[dr]\ar[dl]
\ar@{}[dd]^(.4){\,}="1" \ar@{}[dd]^(.8){\,}="3"
\ar@{=>}"1";"3"
&
\\
\calL_{gg'}+\calL_{g''}\ar[d]&&\calL_g+\calL_{g'g''}\ar[d]\\
(\calL_g+\calL_{g'})+\calL_{g''}\ar[rr]&&\calL_g+(\calL_{g'}+\calL_{g''})}\end{equation}
such that for every $g,g',g'',g'''$, the natural compatibility
condition  holds, which we  describe below.

\begin{rmk}
Our notation for the $2$-arrow in diagram~\eqref{assoc} is symbolic,
and is distinct from the traditional notation of $2$-arrows in a
$2$-category, because this $2$-arrow is between a pair of $1$-arrows
from $\calL_{gg'g''}$ to $\calL_g + (\calL'_g + \calL_g'')$ and
should be written horizontally from left to right rather than
vertically. This notation for the $2$-arrow will be important for us
in diagram~\eqref{ip14}.
\end{rmk}

We define an isomorphism between two central extensions of $G$ by
$\calP$. An isomorphism between two central extensions
$\calL,\calL'$ is a rule which assigns to any $g$ a $\calP$-torsor
$1$-isomorphism $\calL_g\simeq\calL'_g$, and to any $g,g'$ the
following
 $2$-isomorphism
\[\xymatrix{\calL_{gg'}\ar[r]\ar[d]&\calL_g+\calL_{g'}\ar[d]
\ar@{}[d]_(.2){\, }="1"
\\
\calL'_{gg'}\ar[r]
\ar@{}[r]^(.2){\,}="2"
&\calL'_g+\calL'_{g'}
\ar@{}"1";"2"^(.2){\,}="3"
      \ar@{}"1";"2"^(.8){\,}="4"
      \ar@{=>}"4";"3"}
\mbox{.}\]
In
addition, these assignments have to be compatible with
diagram~\eqref{assoc} in an obvious way.

Now we describe the compatibility condition which we need after
diagram~\eqref{assoc}. If we don't consider the associativity
constraints in category $B\calP$, then the $2$-arrows induced by the
one in~\eqref{assoc} should satisfy the compatibility condition
described by  the following cube:
\begin{equation}
\label{cube}
\xymatrix
@C=1pt@R=37pt
{
& \calL_{g} + \calL_{g'} + \calL_{g''g'''}
\ar[rr] && \calL_{g} + \calL_{g'} + \calL_{g''} + \calL_{g'''} \\
\calL_{g} + \calL_{g'g''g'''}
\ar[ur] \ar[rr]
 & \ar@{-->}[u]&
\calL_{g} + \calL_{g'g''} +  \calL_{g'''} \ar[ur] \\
&  \calL_{gg'} + \calL_{g''g'''} \ar@{--}[u] \ar@{--}[r] &\ar@{-->}[r]&
\calL_{gg'} + \calL_{g''} + \calL_{g'''} \ar[uu] \\
\calL_{gg'g''g'''} \ar[uu] \ar@{-->}[ur] \ar[rr] && \calL_{gg'g''} + \calL_{g'''} \ar[uu] \ar[ur]
}
\end{equation}
To obtain the correct compatibility diagram for $2$-morphisms, we have to replace in diagram~\eqref{cube}
 the arrow (an edge of cube)
 $$\xymatrix{\calL_{gg'} + \calL_{g''} + \calL_{g'''} \ar[r]  &\calL_{g} + \calL_{g'} + \calL_{g''} + \calL_{g'''}}$$ by the following commutative diagram of $1$-morphisms in the category~$B\calP$
 \begin{equation} \label{ee1}
 \xymatrix{
 (\calL_{gg'} + \calL_{g''}) + \calL_{g'''} \ar[r]  \ar[d] & ((\calL_{g} + \calL_{g'}) + \calL_{g''}) + \calL_{g'''} \ar[d] \\
 \calL_{gg'} + (\calL_{g''} + \calL_{g'''}) \ar[r] & (\calL_{g} + \calL_{g'}) + (\calL_{g''}+ \calL_{g'''})
 }
 \end{equation}
 (where the vertical arrows are  associativity constraints); we have to replace in diagram~\eqref{cube} the arrow
 (an edge of the cube)
 $$\xymatrix{\calL_{g} + \calL_{g'g''} + \calL_{g'''} \ar[r]  &\calL_{g} + \calL_{g'} + \calL_{g''} + \calL_{g'''}}$$
 by the following commutative diagram of $1$-morphisms in the category~$B\calP$
 \begin{equation} \label{ee2}
 \xymatrix{
 (\calL_{g} + \calL_{g'g''}) + \calL_{g'''} \ar[r]  \ar[d] & (\calL_{g} + (\calL_{g'} + \calL_{g''})) + \calL_{g'''} \ar[d] \\
 \calL_{g} + (\calL_{g'g''} + \calL_{g'''}) \ar[r] & \calL_{g} + ((\calL_{g'} + \calL_{g''})+ \calL_{g'''})
 }
 \end{equation}
 (where vertical arrows are  associativity constraints); we have to replace in diagram~\eqref{cube} the arrow
 (an edge of the cube)
 $$\xymatrix{\calL_{g} + \calL_{g'} + \calL_{g''g'''} \ar[r]  &\calL_{g} + \calL_{g'} + \calL_{g''} + \calL_{g'''}}$$
 by the following commutative diagram of $1$-morphisms in the category~$B\calP$
 \begin{equation} \label{ee3}
 \xymatrix{
 (\calL_{g} + \calL_{g'}) + \calL_{g''g'''} \ar[r]  \ar[d] & (\calL_{g} + \calL_{g'}) + (\calL_{g''} + \calL_{g'''}) \ar[d] \\
 \calL_{g} + (\calL_{g'} + \calL_{g''g'''}) \ar[r] & \calL_{g} + (\calL_{g'} + (\calL_{g''}+ \calL_{g'''}))
 }
 \end{equation}
 (where vertical arrows are  associativity constraints). Besides, instead of
 the vertex $\calL_{g} + \calL_{g'} + \calL_{g''} + \calL_{g'''}$
 in diagram~\eqref{cube}
 we insert the commutative diagram which is  the modification of pentagon diagram~\eqref{pentagon2} for
 $\calL_{g}, \calL_{g'}, \calL_{g''}, \calL_{g'''}$, and this diagram  is always true in category $B\calP$.
 The  correct compatibility diagram for $2$-morphisms
from diagrams~\eqref{assoc} has  $15$ vertices.

We note that diagrams~\eqref{ee1}-\eqref{ee3} are commutative for
$1$-morphisms, i.e. the corresponding $2$-isomorphisms  equal
identity morphisms. These diagrams  express the "functoriality" of
associativity constraints in $B\calP$ and follow from
axioms-di\-a\-grams~\eqref{axas1}-\eqref{axas3}   in category
$B\calP$.

\medskip

The trivial central extension of $G$ by $\calP$ is the rule which
assign to every $g\in G$ the trivial $\calP$-torsor $\calP$, to
every $g,g'$ the natural $1$-isomorphism $\calP\simeq
\calP+\calP$\footnote{The naturality means that this $1$-isomorphism
is the chosen quasi-inverse of the natural $1$-isomorphism
$\calP+\calP\to\calP$.}, and to every $g,g',g''$ the corresponding
natural $2$-isomorphism.

\begin{rmk}A central extension $\calL$ of $G$ by $\calP$ gives rise
to a gr-category, $\tilde{\calL}$, together with a short exact
sequence of gr-categories in the sense of \cite[definition
2.1.2]{Br3}
\[1\to\calP\stackrel{i}{\to}\tilde{\calL}\stackrel{\pi}{\to} G\to 1.\]
Namely, as a category, $\tilde{\calL}=\bigcup_{g\in G} \calL_g$.
Then the natural equivalence $\calL_{gg'}\simeq \calL_g+\calL_{g'}$
together with the compatibility conditions endows $\tilde{\calL}$
with a gr-category structure. The natural morphism
$\pi:\tilde{\calL}\to G$ is clearly monoidal, and one can show that
$\ker\pi=\calL_e$ is 1-isomorphic to $\calP$.

As is shown in \emph{loc. cit.}, such a short exact sequence endows
every $\tilde{\calL}_g:=\pi^{-1}(g)=\calL_g$ with a $\calP$-bitorsor
structure. This $\calP$-bitorsor structure is nothing but the
canonical $\calP$-bitorsor structure on $\calL_g$ (observe that the
morphism $\calZ:\calP\to\Hom_\calP(\calL_g,\calL_g)$ as in
\eqref{zz} induces a canonical $\calP$-bitorsor structure on
$\calL_g$).

The upshot is that an object $\calL$ in $H^2(BG,\calP)$ gives rise
to a categorical generalization of a central extension of a group by
an abelian group. This justifies our terminology. Indeed, one can
define a central extension of $G$ by $\calP$ as a short exact
sequence as above such that the induced $\calP$-bitorsor structure
on each $\tilde{\calL}_g$ is the canonical one induced from its left
$\calP$-torsor structure. Since we do not use this second
definition, we will not make it precise.
\end{rmk}

Finally, let us define the Picard structure on $H^2(BG,\calP)$. Let
$\calL$ and $\calL'$ be two central extensions of $G$ by $\calP$.
Then we define the central extension  $\calL + \calL'$ by the
following way:
$$
(\calL + \calL')_g := \calL_g + \calL'_g  \mbox{,}
$$
and the equivalence $(\calL + \calL')_{gg'}  \simeq (\calL+ \calL')_g + (\calL + \calL')_{g'} $ as the composition of the following equivalences
\begin{multline*}
(\calL + \calL')_{gg'} = \calL_{gg'} + \calL'_{gg'} \simeq (\calL_g + \calL_{g'})+ (\calL'_{g} + \calL'_{g'}) \\
\simeq (\calL_g +\calL'_{g}) + (\calL_{g'}+ \calL'_{g'})=(\calL + \calL' )_g + (\calL + \calL')_{g'}  \mbox{.}
\end{multline*}
The corresponding $2$-isomorphism for central extension $\calL +
\calL'$ and any elements $g, g', g''$ of $G$ follows from
diagrams~\eqref{assoc} for central extensions $\calL$ and $\calL'$.
The further compatibility conditions for these $2$-isomorphisms hold
as in diagrams~\eqref{cube}-\eqref{ee3}, since they follow at once
from the corresponding diagrams for central extensions $\calL$  and
$\calL'$.

\medskip

Again, let $Z_2$ denote the subset of $G\times G$ consisting of
commuting elements. We will give a categorical analogue of
lemma-definition~\ref{comm}. For this purpose, let us first explain
some terminology. A 1-morphism $f:Z_2\to\calP$ is called
bimultiplicative if for fixed $g\in G$, $(Z_G(g),g)\subset
Z_2\to\calP$ and $(g,Z_G(g))\subset Z_2\to\calP$ are homomorphisms,
i.e. monoidal functors from discrete monoidal categories
$(Z_G(g),g)$ and $(g,Z_G(g))$ to $\calP$. In addition, the following
diagram must be commutative (which is the compatibility condition
between these two homomorphisms)
\begin{equation}\label{comp}\small
\xymatrix{f(g_1g_2,g_3)+f(g_1g_2,g_4)\ar[r]&(f(g_1,g_3)+f(g_2,g_3))+(f(g_1,g_4)+f(g_2,g_4))\ar^\simeq[dd]\\
f(g_1g_2,g_3g_4)\ar^\simeq[u]\ar_\simeq[d]&\\
f(g_1,g_3g_4)+f(g_2,g_3g_4)\ar[r]&(f(g_1,g_3)+f(g_1,g_4))+(f(g_2,g_3)+f(g_2,g_4))}.\end{equation}
When $\calP=BA$, a bimultiplicative 1-morphism from $Z_2\to BA$ is the same as a weak biextension of $Z_2$ by $A$ as defined in \cite[\S 2]{Br2} (see also remark~\ref{bext}).

A 1-morphism $f:Z_2\to\calP$ is called anti-symmetric if there is a
2-isomorphism $\theta:f\simeq -f\circ\sigma$, where $\sigma$ is the
natural flip on $Z_2$, such that for any $(g_1,g_2)\in Z_2$, the
following diagram is commutative
\[\xymatrix{f(g_1,g_2)\ar^\simeq[r]\ar@{=}[d]&-f(g_2,g_1)\ar^\simeq[d]\\
f(g_1,g_2)&-(-f(g_1,g_2))\ar_\simeq[l]}\]

We need some more terminology. Following \cite[\S 7]{Br2}, we {\em
define}  a weak $(2,2)$-extension of $Z_2$ by $\calP$ as a rule
which assigns to every $(g,g')\in Z_2$ a $\calP$-torsor
$\calE_{(g,g')}$ such that its restrictions to $(g,Z_G(g))$ and
$Z_G(g),g)$ are central extensions of $Z_G(g)$ by $\calP$, and that
the corresponding diagram \eqref{comp} is $2$-commutative (i.e.
commutative modulo some $2$-isomorphism), and these $2$-isomorphisms
satisfy further compatibility conditions (see (7.1), (7.3) in {\it
loc. cit.} where these compatibility conditions are carefully spelt
out).

\begin{lem-def}\label{C2}There is an anti-symmetric bimultiplicative homomorphism
$C^\calL_2:Z_2\to\calP$.
\end{lem-def}
\begin{proof}As in the proof of lemma-definition~\ref{comm}, using the commutativity constraints
$C:\calL_{g}+\calL_{g'}\simeq \calL_{g'}+\calL_g$ in the category $B\calP$, one constructs the following composition
of $1$-isomorphisms:
\[\calL_{gg'}\simeq\calL_g+\calL_{g'}\simeq\calL_{g'}+\calL_g\simeq\calL_{g'g}=\calL_{gg'},\]
for $(g,g')\in Z_2$. In this way, we obtain a functor
$Z_2\to\Hom_\calP(\calL_{gg'},\calL_{gg'})$. Using
$\calZ^{-1}:\Hom_{\calP}(\calL_{gg'},\calL_{gg'})\to\calP$
(see~\eqref{zzz}), we get a morphism $C^\calL_2:Z_2\to\calP$.

We need to construct the following canonical isomorphisms
\[C^\calL_2(gg',g'')\simeq C^\calL_2(g,g'')+C^\calL_2(g',g''),
\quad C^\calL_2(g,g'g'')\simeq C^\calL_2(g,g')+C^\calL_2(g,g''),\]
satisfying the natural compatibility conditions. We now construct
the first isomorphism. The second is similar. Let
$\calZ:\calP\to\Hom_{\calP}(\calL_{gg'g''},\calL_{gg'g''})$ be the
canonical equivalence as monoidal groupoids as in \eqref{zz}. It is
enough to construct a canonical $2$-isomorphism
$\calZ(C^\calL_2(gg',g''))\simeq\calZ(C^\calL_2(g,g'')+C^\calL_2(g',g''))$.

By the definition of the morphism  $C^\calL_2$, there is a canonical
$2$-isomorphism from $1$-isomorphism
$\calZ(C_2^{\calL}(g,g'')+C_2^{\calL}(g',g''))$ to the following
composition of $1$-isomorphisms:
\begin{multline}\label{C2I}\calL_{gg'g''}\simeq\calL_{g}+\calL_{g'g''}\simeq\calL_g+(\calL_{g'}+\calL_{g''})\simeq\calL_g+(\calL_{g''}+\calL_{g'})\simeq\calL_g+\calL_{g''g'}\simeq\calL_{gg''g'}\\
\simeq\calL_{gg''}+\calL_{g'}\simeq(\calL_g+\calL_{g''})+\calL_{g'}\simeq(\calL_{g''}+\calL_g)+\calL_{g'}\simeq\calL_{gg''}+\calL_{g'}\simeq\calL_{gg'g''}\end{multline}
By the definition of the central extension of $G$ by $\calP$ (see diagram~\eqref{assoc}), there is a canonical $2$-isomorphism from the above composition of $1$-isomorphisms to the following composition of $1$-isomorphisms
\begin{multline}\label{C2II}\calL_{gg'g''}\simeq\calL_{g}+\calL_{g'g''}\simeq\calL_g+(\calL_{g'}+\calL_{g''})\simeq\calL_g+(\calL_{g''}+\calL_{g'})\\
\simeq(\calL_g+\calL_{g''})+\calL_{g'}\simeq(\calL_{g''}+\calL_g)+\calL_{g'}\simeq\calL_{gg''}+\calL_{g'}\simeq\calL_{gg'g''}\end{multline}
From the hexagon axiom for $1$-morphisms in the category $B\calP$
(see diagram~\ref{hexagon2}) we have that  the  $1$-isomorphism
which is the composition of the above $1$-isomorphisms is equal to
the $1$-isomorphism which is the composition of the following
$1$-isomorphisms
\begin{multline}\label{C2III}\calL_{gg'g''}\simeq\calL_{g}+\calL_{g'g''}\simeq\calL_g+(\calL_{g'}+\calL_{g''})\simeq(\calL_g+\calL_{g'})+\calL_{g''}\\
\simeq\calL_{g''}+(\calL_g+\calL_{g'})\simeq(\calL_{g''}+\calL_g)+\calL_{g'}\simeq\calL_{gg''}+\calL_{g'}\simeq\calL_{gg'g''}\end{multline}
By the "functoriality" of the commutativity constraints in the category $B\calP$ (see axiom-diagram~\eqref{axcom})
 we have that  the  $1$-isomorphism which is the composition of the above $1$-isomorphisms is equal to the
 $1$-isomorphism which is the composition of the following $1$-isomorphisms
\begin{multline}\label{C2IV}\calL_{gg'g''}\simeq\calL_{g}+\calL_{g'g''}\simeq\calL_g+(\calL_{g'}+\calL_{g''})\simeq(\calL_g+\calL_{g'})+\calL_{g''}
\simeq \calL_{gg'} + \calL_{g''}\\ \simeq \calL_{g''} + \calL_{gg'}
\simeq\calL_{g''}+(\calL_g+\calL_{g'})\simeq(\calL_{g''}+\calL_g)+\calL_{g'}\simeq\calL_{gg''}+\calL_{g'}\simeq\calL_{gg'g''}\end{multline}
Again, by the definition of the central extension of $G$ by $\calP$ (see diagram~\eqref{assoc}, which we  apply twice now), there is a canonical $2$-isomorphism from the above composition of $1$-isomorphisms to the following composition of $1$-isomorphisms
\begin{equation}\label{C2V}\calL_{gg'g''}\simeq\calL_{gg'}+\calL_{g''}\simeq\calL_{g''}+\calL_{gg'}\simeq\calL_{gg'g''} \mbox{,}\end{equation}
which is canonically isomorphic to $\calZ(C_2^\calL(gg',g''))$.

Let us write down a diagram which will represent the above
$2$-isomorphisms. To simplify the notation, we will denote the
$2$-commutative diagram \eqref{assoc} as
\begin{equation} \label{ip14}
\xymatrix{\calL_{gg'g''}\ar@{=>}[d]\\\calL_g+\calL_{g'}+\calL_{g''}}
\end{equation}

Then, the $2$-isomorphism $\calZ(C^\calL_2(gg',g''))\simeq\calZ(C^\calL_2(g,g''))+\calZ(C^\calL_2(g',g''))$ is
represented by the following diagram
\begin{equation}\label{constrC2}
\xymatrix@C=20pt@R=20pt{& & \calL_{gg''g'} \ar@{=>}[d] \ar^{\calZ(C_2^\calL(g,g''))}[ddrr] & & \\
& & \calL_g+\calL_{g''}+\calL_{g'} \ar[dr] \ar[dl] & & \\
\calL_{gg'g''} \ar^{\calZ(C_2^\calL(g',g''))}[uurr] \ar@{=>}[r] \ar@ / _1.5pc/[rrrr]_{\calZ(C_2^\calL(gg',g''))} & \calL_g+\calL_{g'}+\calL_{g''}\ar[rr] && \calL_{g''}+\calL_g+\calL_{g'} & \calL_{g''gg'}\ar@{=>}[l]
}\end{equation}

To check all the compatibility conditions between these canonical
isomorphisms we generalize the proof of lemma-definition~\ref{comm}.
We construct the following $2$-category $H_{\calL}$, where objects
of $H_{\calL}$ are  objects from  categories given by all
expressions
\[\calL_{g_1}+ \cdots + \calL_{g_k}:=(\cdots(\calL_{g_1}+\calL_{g_2})+\calL_{g_3})+\cdots)+\calL_{g_k},
\mbox{ where }g_i \in G,\]
$1$-morphisms in $H_{\calL}$ are defined as following:
\begin{multline*}
\Hom_{H_{\calL}} (\calL_{g_{i_1}} + \ldots + \calL_{g_{i_k}} \, , \, \calL_{g_{j_1}} + \ldots + \calL_{g_{j_l}}) = \\
\left\{\begin{array}{ll}\emptyset & \mbox{if} \quad g_{i_1} \ldots g_{i_k} \ne
g_{j_1} \ldots g_{j_l} \mbox{;}\\
\Hom_{B\calP} (\calL_{g_{i_1}} + \ldots + \calL_{g_{i_k}} \, , \, \calL_{g_{j_1}} + \ldots + \calL_{g_{j_l}})
& \mbox{if} \quad g_{i_1} \ldots g_{i_k} =
g_{j_1} \ldots g_{j_l} \mbox{,}
\end{array}\right.
\end{multline*}
and $2$-morphisms in $2$-category $H_{\calL}$ come from
$2$-morphisms of category $B\calP$. The category $H_{\calL}$ is a
monoidal group-like $2$-groupoid (or a $2$-$gr$-category),
see~\cite[\S 8]{Br}, where monoidal structure on $H_{\calL}$ is
given in an obvious way by using the associativity constraints in
the category $B\calP$ and pentagon diagram~\eqref{pentagon2}. We
have $\pi_0 (H_{\calL}) = G$. We consider the $\calP$-torsor
$\calE^\calL$ on
 $Z_2$ which is the commutator of $H_{\calL}$ (see~\cite[\S8]{Br2}\footnote{L. Breen assumed
 for simplicity in {\it loc. cit.}
  that the group $\pi_1$ of a $2$-gr-category is equal to $0$. We have $\pi_1(H_{\calL}) \ne 0$, but
  the constructions and its properties which  we need remain true in our situation.}).
  The fibre of $\calE^\calL$ over $(g_1,g_2) \in Z_2 $ is the $\calP$-torsor
 \begin{equation*}\label{biex}
 \calE^\calL_{g_1,g_2} = \Hom_{H_{\calL}} (\calL_{g_1} + \calL_{g_2}, \calL_{g_2} + \calL_{g_1}).
 \end{equation*}
 The $\calP$-torsor $\calE^\calL$ on $Z_2 $ has a natural structure of a weak $(2,2)$-extension
 (see~\cite[prop. 8.1]{Br2}), i.e. there are partial composition (group) laws on $\calE^\calL$
 which are compatible (see diagrams (7.1), (7.3)  in {\it loc.cit}).
 Now the commutativity constraints $C$ from $B\calP$ give a trivialization of $\calP$-torsor $\calE^\calL$ on $Z_2$
 which is compatible with partial composition laws on $\calE^\calL$, i.e. "bimultiplicative".
 (The compatibility of this trivialization with composition laws follows at once from
  definition of partial composition laws on $\calE^\calL$ and hexagon diagram~\eqref{hexagon2}.
  See also the discussion in the end  of~\cite[\S8]{Br2} regarding the braiding structure in $H_{\calL}$,
  which gives the "bimultiplicative" trivialization of the $\calP$-torsor $\calE^\calL$ on $Z_2$.) The other
  trivialization of the $\calP$-torsor $\calE^\calL$ on $Z_2$ which is compatible with partial composition laws on
  $\calE^\calL$ is obtained as the composition of the following
  two equivalences from
   definition of $\calL$:
 \begin{equation*}\label{S}S_{\calL_{g_{1}},\calL_{g_{2}}}:\calL_{g_{1}} +\calL_{g_{2}}
 \simeq \calL_{g_{1}g_{2}} = \calL_{g_{2}g_{1}} \simeq \calL_{g_{2}} +\calL_{g_{1}}.\end{equation*}
Now the difference between the first  trivialization and the second
trivialization of the $\calP$-torsor $\calE^\calL$ on $Z_2$
coincides with  $C^\calL_2$, which is, thus, a bimultiplicative
homomorphism, because both trivializations are "bimultiplicative".

We have shown that $C_2^\calL:Z_2\to\calP$ is a bimultiplicative
1-morphism. One readily checks from the above constructions that
this is anti-symmetric from $Z_2$ to $\calP$, since $C^2 = \on{id}$.
\end{proof}

\begin{rmk} \label{Del}
If $\calP =BA$, then the construction of~$C^\calL_2$ given above is
equivalent to the construction of  the  commutator category of the
central extension $-\calL$ introduced by Deligne in~\cite{Del2}.
\end{rmk}

We also have the following categorical analogue of
corollary~\ref{Add}. First, let us remark that if
$f_1,f_2:Z_2\to\calP$ are two bimultiplicative homomorphisms, one
can define  $f_1+f_2$, which is again a bimultiplicative
homomorphism, in the same way as defining the Picard structure on
$H^1(BG,\calP)$.

\begin{lem} \label{lembaer} For any two central extensions $\calL$ and $\calL'$ of $G$ by $\calP$
there is a natural bimultiplicative $2$-isomorphism (i.e., it respects
the bimultiplicative structure) between bimultiplicative $1$-morphisms
$C^{\calL + \calL' }_2$ and $C^{\calL}_2 + C^{\calL'}_2$.
\end{lem}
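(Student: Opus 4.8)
The plan is first to construct the desired $2$-isomorphism fibrewise over $Z_2$, using the compatibility of the braiding in $B\calP$ with the Picard structure, and then to check that this fibrewise datum respects the two bimultiplicative structures. Fix $(g,g')\in Z_2$. By the construction in Lemma-Definition~\ref{C2}, $C^\calL_2(g,g')$ is the image under $\calZ^{-1}$ (see~\eqref{zz},~\eqref{zzz}) of the automorphism $\Phi_\calL\in\Hom_\calP(\calL_{gg'},\calL_{gg'})$ obtained from the composite $\calL_{gg'}\simeq\calL_g+\calL_{g'}\simeq\calL_{g'}+\calL_g\simeq\calL_{g'g}=\calL_{gg'}$, and likewise $C^{\calL'}_2(g,g')=\calZ^{-1}(\Phi_{\calL'})$. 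Since $(\calL+\calL')_h=\calL_h+\calL'_h$ and the structure equivalence $S_{\calL+\calL'}$ factors as $S_\calL+S_{\calL'}$ followed by the middle-four interchange $m$ (assembled from the associativity and commutativity constraints of $B\calP$), the automorphism $\Phi_{\calL+\calL'}$ computing $C^{\calL+\calL'}_2(g,g')$ is the composite
\begin{multline}\label{baer}
\calL_{gg'}+\calL'_{gg'}\simeq(\calL_g+\calL_{g'})+(\calL'_g+\calL'_{g'})\simeq(\calL_g+\calL'_g)+(\calL_{g'}+\calL'_{g'})\\
\simeq(\calL_{g'}+\calL'_{g'})+(\calL_g+\calL'_g)\simeq(\calL_{g'}+\calL_g)+(\calL'_{g'}+\calL'_g)\simeq\calL_{gg'}+\calL'_{gg'},
\end{multline}
in which the outer maps are $S_\calL+S_{\calL'}$ and its inverse, the second and fourth are $m$ and $m^{-1}$, and the central one is the commutativity constraint $C$ of $B\calP$ applied to the two ``columns''.

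The heart of the fibrewise statement is the coherence that in the symmetric monoidal $2$-category $B\calP$ the braiding of a sum decomposes over the interchange: the central portion $m^{-1}\circ C\circ m$ of~\eqref{baer} equals the $1$-morphism $C_{\calL_g,\calL_{g'}}+C_{\calL'_g,\calL'_{g'}}$ gotten by braiding each summand separately and adding. I would establish this by a diagram chase from the hexagon~\eqref{hexagon2}, the functoriality of the constraints~\eqref{axas1}--\eqref{axcom}, and the pentagon~\eqref{pentagon2}; it is the usual Mac Lane coherence for a symmetric monoidal structure, transcribed into the semistrict setting of $B\calP$. Granting it, bifunctoriality of $+$ on $1$-morphisms turns~\eqref{baer} into $\Phi_\calL+\Phi_{\calL'}$, viewed as an automorphism of $\calL_{gg'}+\calL'_{gg'}$. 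Applying $\calZ^{-1}$, which is compatible with the Picard structure up to a canonical $2$-isomorphism (so that $\calZ^{-1}(\Phi_\calL+\Phi_{\calL'})\simeq\calZ^{-1}(\Phi_\calL)+\calZ^{-1}(\Phi_{\calL'})$), yields the fibrewise isomorphism $C^{\calL+\calL'}_2(g,g')\simeq C^\calL_2(g,g')+C^{\calL'}_2(g,g')$ in $\calP$. Naturality in $(g,g')$ is automatic, since every map used is natural.

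It remains to see that these fibrewise isomorphisms form a bimultiplicative $2$-isomorphism, i.e.\ that they intertwine the structure isomorphisms $C_2(gg',g'')\simeq C_2(g,g'')+C_2(g',g'')$ of~\eqref{constrC2}; concretely, the square
\begin{equation}\label{baer2}
\xymatrix@C=12pt{
C^{\calL+\calL'}_2(gg',g'')\ar[r]\ar[d] & C^{\calL+\calL'}_2(g,g'')+C^{\calL+\calL'}_2(g',g'')\ar[d]\\
(C^\calL_2+C^{\calL'}_2)(gg',g'')\ar[r] & (C^\calL_2+C^{\calL'}_2)(g,g'')+(C^\calL_2+C^{\calL'}_2)(g',g'')
}
\end{equation}
(horizontal arrows the bimultiplicative structure maps, vertical arrows the fibrewise isomorphism above) must commute. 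The cleanest route, which I would follow, is to argue as in Lemma-Definition~\ref{C2}: realize each $C_2$ as the difference of the braiding trivialization and the structure trivialization of the commutator weak $(2,2)$-extension $\calE$, and observe that the Baer sum of central extensions induces a Baer sum $\calE^{\calL+\calL'}\simeq\calE^\calL+\calE^{\calL'}$ of weak $(2,2)$-extensions under which both trivializations are additive. Since the partial composition laws are preserved by the Baer sum, the difference $C_2$ is additive as a bimultiplicative homomorphism and~\eqref{baer2} commutes. I expect this last step to be the main obstacle: matching the two bimultiplicative structures forces one to carry the interchange $m$ and the braiding $C$ through the pentagon and hexagon coherence of $B\calP$ at the level of three elements $g,g',g''$, exactly as packaged in~\eqref{comp} and~\eqref{constrC2}. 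Finally, anti-symmetry of the resulting $2$-isomorphism, as for $C_2$ itself, is immediate from the identity $C^2=\id$ in $B\calP$.
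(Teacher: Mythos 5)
Your proposal is correct in substance, and its two halves relate to the paper's proof in different ways. The fibrewise step is essentially the paper's own argument: the paper likewise writes $\calZ(C_2^{\calL+\calL'}(g,g'))$ as the long composition \eqref{baer} and then contracts its middle portion (interchange, braiding of the two ``columns'', inverse interchange) to the sum of the separate braidings, using two applications of the functoriality axiom \eqref{axcom} together with the diagram \eqref{baer'} whose triangles are filled by hexagons \eqref{hexagon2}; this is precisely your coherence claim that $m^{-1}\circ C\circ m$ equals $C_{\calL_g,\calL_{g'}}+C_{\calL'_g,\calL'_{g'}}$, and it yields the paper's short composition \eqref{baer2}. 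Where you genuinely diverge is in proving that the fibrewise isomorphisms respect the bimultiplicative structure. The paper does this by brute force: it decomposes the structure $2$-isomorphism $C_2(gg',g'')\simeq C_2(g,g'')+C_2(g',g'')$ into the chain \eqref{C2I}--\eqref{C2V}, produces canonical $2$-isomorphisms $\eqref{C2I}_\calL+\eqref{C2I}_{\calL'}\simeq\eqref{C2I}_{\calL+\calL'}$ (and similarly at each stage), and checks that all intermediate squares commute, which together give \eqref{ccoomm}. You instead recycle the weak $(2,2)$-extension formalism from the proof of lemma-definition~\ref{C2}: each $C_2$ is the difference of two bimultiplicative trivializations of the commutator extension $\calE$, the Baer sum identifies $\calE^{\calL+\calL'}$ with $\calE^\calL+\calE^{\calL'}$, and both trivializations are additive under this identification (the structure trivialization essentially by construction of the Baer sum of central extensions, the braiding trivialization by the same coherence as in the fibrewise step), so their difference is additive. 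This route is more conceptual and reuses machinery already in place; what it costs is exactly the verification you defer, namely that the sum of weak $(2,2)$-extensions is well defined and that the identification $\calE^{\calL+\calL'}\simeq\calE^\calL+\calE^{\calL'}$ respects the partial composition laws --- a diagram check of roughly the same size as the paper's explicit one, so nothing is gained for free, but nothing would fail either. One presentational wrinkle: having built the fibrewise isomorphism by hand and the additivity abstractly, you should either observe that the abstract $2$-isomorphism restricts fibrewise to the one you constructed (it does --- unwinding the trivialization-difference description reproduces the passage from \eqref{baer} to \eqref{baer2}), or else drop the hand-built one and work with the $(2,2)$-extension construction throughout.
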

\begin{proof} Recall that we have the following canonical $1$-isomorphism $$\calZ:\calP\to\Hom(\calL_{gg'}+\calL'_{gg'},\calL_{gg'}+\calL'_{gg'}) \mbox{.}$$
We construct a canonical isomorphism  $$\calZ(C^{\calL + \calL'}_2(g,g')) \simeq \calZ(C^{\calL}_2(g,g') + C^{\calL'}_2(g,g')) $$  for any $(g,g') \in Z_2$ as following.
By definition,  $\calZ(C^{\calL + \calL'}_2(g,g'))$ is canonically $2$-isomorphic to the following composition of $1$-morphisms
\begin{multline} \label{baer}
(\calL +\calL')_{gg'} = \calL_{gg'} + \calL_{gg'}' \simeq (\calL_{g}+\calL_{g'})+(\calL_g'+\calL'_{g'}) \simeq
(\calL_g + \calL_g') +(\calL_{g'}+ \calL_{g'}')  \\ =(\calL + \calL')_{g} + (\calL + \calL')_{g'}
\simeq (\calL + \calL')_{g'}  + (\calL + \calL')_{g} \simeq (\calL_{g'}+\calL_{g'}') +(\calL_g + \calL'_g) \\  \simeq
(\calL_{g'}+\calL_g) + (\calL_{g'}'  + \calL'_g )  \simeq \calL_{g'g} + \calL'_{g'g} = (\calL + \calL')_{g'g}
= (\calL + \calL')_{gg'}
\end{multline}
Using the "functoriality" of commutativity constraints, i.e.
applying twice  diagram~\eqref{axcom}, and using the following
commutative diagram (which is written without associativity
constraints)
 \begin{equation} \label{baer'}\xymatrix{&\calL_g+\calL_{g'}+\calL'_g+\calL'_{g'}\ar[dl]\ar[dr]&\\
\calL_g+\calL_{g'}+\calL'_{g'}+\calL'_g \ar[d] \ar[drr] && \calL_g+\calL'_g+\calL_{g'}+\calL'_{g'} \ar[d] \ar[ll]\\
\calL_{g'}+\calL_g+\calL'_{g'}+\calL'_g &&  \ar[ll] \calL_{g'}+\calL'_{g'}+\calL_g+\calL'_g} \end{equation}
 (to obtain the correct diagram we have to replace every triangle in this diagram by a hexagon coming from~\eqref{hexagon2}), we obtain that
the composition~\eqref{baer} of $1$-morphisms is equal to the following composition of $1$-morphisms
\begin{multline} \label{baer2}
 \calL_{gg'} + \calL_{gg'}' \simeq (\calL_{g}+\calL_{g'})+(\calL_g'+\calL'_{g'})  \\ \simeq
(\calL_{g'}+\calL_g) + (\calL_{g'}'  + \calL'_g )  \simeq \calL_{g'g} + \calL'_{g'g} = \calL_{gg'} + \calL'_{gg'} \mbox{,}
\end{multline}
which is, by definition,  $2$-isomorphic to  $\calZ(C^{\calL}_2(g,g') + C^{\calL'}_2(g,g'))$.

To complete the proof, we need to show that the following diagram
\begin{equation}\label{ccoomm} \small \xymatrix{C_2^{\calL+\calL'}(g,g'')+C_2^{\calL+\calL'}(g',g'')\ar[r]&
(C_2^{\calL}(g,g'')+C_2^{\calL'}(g,g''))+(C_2^{\calL}(g',g'')+C_2^{\calL'}(g',g''))\ar[dd]\\
C_2^{\calL+\calL'}(gg',g'')\ar[u]\ar[d]&\\
C_2^\calL(gg',g'')+C_2^{\calL'}(gg',g'')\ar[r]&(C_2^{\calL}(g,g'')+C_2^{\calL}(g',g''))+(C_2^{\calL'}(g,g'')+C_2^{\calL'}(g',g''))
}\end{equation} and a similar diagram involving
$C_2^{\calL+\calL'}(g,g'g'')$ are commutative. To prove this, let us
recall that the $2$-isomorphism $C_2^\calL(gg',g'')\simeq
C_2^\calL(g,g'')+C_2^\calL(g',g'')$ is the composition of the
following $2$-isomorphisms
\[\calZ(C_2^\calL(g,g'')+C_2^\calL(g',g''))\to\eqref{C2I}\to\eqref{C2II}\to\cdots\to\eqref{C2V}\to\calZ(C_2^\calL(gg',g'')) \mbox{.}\]
Let us denote the $1$-isomorphism \eqref{C2I} for $\calL$ (resp.
$\calL'$, resp. $\calL+\calL'$) as $\eqref{C2I}_\calL$ (resp.
$\eqref{C2I}_{\calL'}$, resp. $\eqref{C2I}_{\calL+\calL'}$) and etc.
Then it is readily  checked that there exists a canonical
$2$-isomorphism
\[\eqref{C2I}_{\calL}+\eqref{C2I}_{\calL'}\simeq\eqref{C2I}_{\calL+\calL'}\]
 between corresponding $1$-isomorphisms $\calL_{gg'g''}+\calL'_{gg'g''}\to\calL_{gg'g''}+\calL_{gg'g''}$, and
   canonical $2$-isomorphisms for \eqref{C2II}-\eqref{C2V} such that the following diagram
\[\xymatrix{\eqref{C2I}_\calL+\eqref{C2I}_{\calL'}\ar[r]\ar[d]&\eqref{C2I}_{\calL+\calL'}\ar[d]\\
\eqref{C2II}_\calL+\eqref{C2II}_{\calL'}\ar[r]&\eqref{C2II}_{\calL+\calL'}
}\]
and similar diagrams for \eqref{C2II}-\eqref{C2V} commute. In addition, the following diagrams commute.
\[\hspace{-0.35cm}\small\xymatrix{\calZ(C_2^{\calL+\calL'}(g,g'')+C_2^{\calL+\calL'}(g',g'')) \ar[r]\ar[d]&
\calZ((C_2^{\calL}(g,g'')+C_2^{\calL}(g',g''))+(C_2^{\calL'}(g,g'')+C_2^{\calL'}(g',g'')))\ar[d]\\
\eqref{C2I}_{\calL+\calL'}\ar[r]&\eqref{C2I}_{\calL}+\eqref{C2I}_{\calL'}
}\]
\[\xymatrix{\calZ(C_2^{\calL+\calL'}(gg',g''))\ar[r]&\calZ(C_2^{\calL}(gg',g'')+C_2^{\calL'}(gg',g''))\\
\eqref{C2V}_{\calL+\calL'}\ar[r] \ar[u]&\eqref{C2V}_{\calL}+\eqref{C2V}_{\calL'} \ar[u]
}\]
These facts together imply the commutativity of diagram \eqref{ccoomm}.
\end{proof}

Fix $g\in G$, the induced map $Z_G(g)\to\calP$ given by $g'\mapsto
C^\calL_2(g,g')$ is denoted by $C^\calL_g$. The bimultiplicativity
of $C_2^\calL$ implies that $C_g^\calL$ is an object in
$H^1(BZ_G(g),\calP)$. It is easy to see from the definition the
following lemma:

\begin{lem}(i) If two central extensions $\calL$ and $\calL'$ of $G$ by $\calP$ are isomorphic in $H^2(BG,\calP)$,
then for any $g$ the induced two homomorphisms $C^\calL_g$ and
$C^{\calL'}_g$ are isomorphic in $H^1(BZ_G(g),\calP)$.

(ii) One has that $C_g^\calP$ is the trivial homomorphism for any
$g\in G$.
\end{lem}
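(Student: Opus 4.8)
The plan is to prove both parts by tracing through the explicit construction of $C^\calL_2$ given in Lemma-Definition~\ref{C2}, rather than invoking any new machinery. For part (i), suppose $\Phi:\calL\simeq\calL'$ is an isomorphism of central extensions in $H^2(BG,\calP)$. By definition, $\Phi$ assigns to each $g$ a $\calP$-torsor $1$-isomorphism $\Phi_g:\calL_g\simeq\calL'_g$, and to each pair $g,g'$ a $2$-isomorphism compatible with the equivalences $\calL_{gg'}\simeq\calL_g+\calL_{g'}$ and $\calL'_{gg'}\simeq\calL'_g+\calL'_{g'}$. First I would fix $(g,g')\in Z_2$ and observe that the composite of $1$-isomorphisms defining $\calZ(C^\calL_2(g,g'))$, namely
\[\calL_{gg'}\simeq\calL_g+\calL_{g'}\simeq\calL_{g'}+\calL_g\simeq\calL_{g'g}=\calL_{gg'},\]
is intertwined with the corresponding composite for $\calL'$ via the $1$-isomorphisms $\Phi_{gg'}$ and $\Phi_g,\Phi_{g'}$. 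The three squares one needs to commute (modulo canonical $2$-isomorphism) are: the square for the first equivalence (this is exactly the compatibility $2$-isomorphism carried by $\Phi$), the square for the commutativity constraint (this is the functoriality axiom~\eqref{axcom} in $B\calP$), and the square for the third equivalence (again the $\Phi$-data). Pasting these three $2$-isomorphisms yields a $2$-isomorphism between the endomorphisms $\calZ(C^\calL_2(g,g'))$ and $\calZ(C^{\calL'}_2(g,g'))$ of the respective objects, hence, applying $\calZ^{-1}$ from~\eqref{zzz}, an isomorphism $C^\calL_2(g,g')\simeq C^{\calL'}_2(g,g')$ in $\pi_1(\calP)$-torsor terms. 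Restricting to $g'\in Z_G(g)$ and checking that these isomorphisms are compatible with the multiplicative structure of $C^\calL_g$ gives the desired isomorphism $C^\calL_g\simeq C^{\calL'}_g$ in $H^1(BZ_G(g),\calP)$.

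For part (ii), I would compute $C^\calP_2$ directly for the trivial central extension $\calP$, where each $\calP_g=\calP$ and the equivalence $\calP_{gg'}\simeq\calP_g+\calP_{g'}$ is the chosen quasi-inverse of the canonical $1$-isomorphism $\calP+\calP\to\calP$. The composite defining $\calZ(C^\calP_2(g,g'))$ is then
\[\calP\simeq\calP+\calP\simeq\calP+\calP\simeq\calP,\]
where the middle equivalence is the commutativity constraint $C$ on $B\calP$. The key point is that on the unit object $\calP$, regarded as a $\calP$-torsor over itself, this composite is canonically $2$-isomorphic to the identity: the outer two equivalences are inverse to each other, and $C$ acts trivially on the symmetric pair $(\calP,\calP)$ because the commutativity constraint on the unit is compatible with the unit structure. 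Hence $\calZ(C^\calP_2(g,g'))\simeq\on{id}$, and applying $\calZ^{-1}$ shows $C^\calP_2(g,g')$ is the trivial automorphism of $e$ for all commuting $g,g'$; consequently $C^\calP_g$ is the trivial homomorphism in $H^1(BZ_G(g),\calP)$.

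The main obstacle I anticipate is part (i): verifying that the $2$-isomorphism $C^\calL_2(g,g')\simeq C^{\calL'}_2(g,g')$ assembled pointwise respects the bimultiplicative (monoidal) structure, so that it genuinely lives in $H^1(BZ_G(g),\calP)$ rather than being merely a family of unrelated isomorphisms. This requires checking that the pasting of the three $2$-isomorphisms is compatible with the coherence data~\eqref{constrC2} expressing bimultiplicativity of $C^\calL_2$, which in turn rests on the compatibility of $\Phi$ with the associativity diagram~\eqref{assoc} and its higher coherence. I expect this to reduce to a diagram chase analogous to the one in Lemma~\ref{lembaer}, using the functoriality axioms~\eqref{axas1}--\eqref{axcom} together with the hexagon~\eqref{hexagon2}; the statement asserts it "is easy to see," so the intended argument is presumably that these compatibilities hold automatically from the definitions, and the bulk of the work is bookkeeping rather than any genuinely new idea.
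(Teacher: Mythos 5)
Your proposal is correct and matches the paper's intent: the paper states this lemma without proof (``It is easy to see from the definition''), and your argument --- pasting the $2$-isomorphisms carried by an isomorphism $\Phi$ of central extensions with the functoriality axiom \eqref{axcom} of the commutativity constraint (plus the $\calP$-torsor structure isomorphism of $\Phi_{gg'}$ to transport the resulting conjugation back through $\calZ^{-1}$) for (i), and using the compatibility of the unit $1$-isomorphism $\calP+\calP\to\calP$ with the commutativity constraint for (ii) --- is precisely the definitional unwinding the paper intends, with the deferred bookkeeping (compatibility with the structure isomorphisms coming from \eqref{assoc}, as in lemma~\ref{lembaer}) being exactly what makes the isomorphisms live in $H^1(BZ_G(g),\calP)$. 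One small terminological slip in (ii): $C_2^{\calP}(g,g')$ is an \emph{object} of $\calP$, so the conclusion should be an isomorphism $C_2^{\calP}(g,g')\simeq e$ with the unit object (compatible with the multiplicative structure), not ``the trivial automorphism of $e$''; the final claim that $C_g^{\calP}$ is the trivial homomorphism is nevertheless the right one.
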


Let $Z_3\subset G\times G\times G$ be the subset of pairwise
commuting elements.

\begin{prop}\label{C3}Let a map
\[C^\calL_3:Z_3\to\pi_1(\calP)\]
be defined by
\[C^\calL_3(g,g',g''):=\on{Comm}(C^\calL_{g})(g',g'').\]
Then $C^\calL_3$ is an anti-symmetric tri-multiplicative
homomorphism from $Z_3$ to $\pi_1(\calP)$.
\end{prop}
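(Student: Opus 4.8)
The plan is to prove the three asserted properties of $C^\calL_3$ by leveraging the already-established properties of the lower-dimensional commutator maps $\on{Comm}$ and $C^\calL_2$. Recall that by definition $C^\calL_3(g,g',g'') = \on{Comm}(C^\calL_g)(g',g'')$, where $C^\calL_g \in H^1(BZ_G(g),\calP)$ is the homomorphism $g' \mapsto C^\calL_2(g,g')$. Since $\on{Comm}$ applied to any homomorphism is already known (lemma-definition~\ref{comm}) to be anti-symmetric and bimultiplicative in its two arguments, the content of the proposition is really about the \emph{first} variable $g$: I must show tri-multiplicativity and anti-symmetry with respect to all three slots simultaneously, not just the last two.

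\textbf{Multiplicativity and symmetry in the last two variables.}
First I would dispose of the properties involving only $g'$ and $g''$. For fixed $g$, the map $(g',g'') \mapsto C^\calL_3(g,g',g'')$ is exactly $\on{Comm}(C^\calL_g)$, so by lemma-definition~\ref{comm} it is anti-symmetric and bimultiplicative in $(g',g'')$. This immediately yields multiplicativity in the second and third slots and the anti-symmetry $C^\calL_3(g,g',g'') = -C^\calL_3(g,g'',g')$.

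\textbf{Multiplicativity in the first variable.}
The key step is multiplicativity in $g$, i.e.\ $C^\calL_3(gh,g',g'') = C^\calL_3(g,g',g'') + C^\calL_3(h,g',g'')$. Here I would use the bimultiplicativity of $C^\calL_2$ established in lemma-definition~\ref{C2}: the canonical isomorphism $C^\calL_2(gh,g') \simeq C^\calL_2(g,g') + C^\calL_2(h,g')$ says precisely that $C^\calL_{gh} \simeq C^\calL_g + C^\calL_h$ as homomorphisms in $H^1(BZ_G(gh),\calP)$ (restricted to the appropriate centralizer). Applying corollary~\ref{Add}, which states $\on{Comm}(f + f') = \on{Comm}(f) + \on{Comm}(f')$, together with remark~\ref{2.5} (that isomorphic homomorphisms have equal $\on{Comm}$), gives
\[
C^\calL_3(gh,g',g'') = \on{Comm}(C^\calL_{gh})(g',g'') = \on{Comm}(C^\calL_g + C^\calL_h)(g',g'') = C^\calL_3(g,g',g'') + C^\calL_3(h,g',g'').
\]
For this I must check that the bimultiplicative $2$-isomorphism from lemma-definition~\ref{C2} genuinely identifies $C^\calL_{gh}$ with $C^\calL_g + C^\calL_h$ \emph{as objects of $H^1$}, respecting the monoidal constraints; this is where the full compatibility data packaged into $C^\calL_2$ (the $15$-vertex diagram and the commutativity of~\eqref{ccoomm}) is used.

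\textbf{Full anti-symmetry.}
The remaining and most delicate point is anti-symmetry under transposing $g$ with $g'$ (or $g$ with $g''$): I must show $C^\calL_3(g,g',g'') = -C^\calL_3(g',g,g'')$. Anti-symmetry in the last two slots is immediate, so full anti-symmetry will follow once I establish this one adjacent transposition together with the already-known one. The natural approach is to observe that $C^\calL_3$ measures the failure of the three commutativity isomorphisms $C$ in $B\calP$ to compose to the identity around a suitable configuration, and that the relation $C^2 = \on{id}$ in $B\calP$ forces the sign. Concretely, I would trace through the diagram~\eqref{constrC2} and the hexagon~\eqref{hexagon2}: interchanging the roles of $g$ and $g'$ reverses the order in which two of the braiding isomorphisms are applied, and by $C^2=\on{id}$ this inverts the resulting element of $\pi_1(\calP)$. \textbf{I expect this full anti-symmetry to be the main obstacle}, because unlike the last-two-slot case it is not a formal consequence of $\on{Comm}$; it requires genuinely using the higher coherence of the braiding in $B\calP$ (the strong-braiding property noted in the remark following the construction of $B\calP$) to compare $C^\calL_{g}$ applied to $(g',g'')$ against $C^\calL_{g'}$ applied to $(g,g'')$. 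Once this is in hand, tri-multiplicativity in the first slot can alternatively be deduced from multiplicativity in any other slot via anti-symmetry, giving a clean and symmetric finish.
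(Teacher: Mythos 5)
Your treatment of tri-multiplicativity is correct and matches the paper exactly: multiplicativity in $g'$ and $g''$ comes from lemma-definition~\ref{comm}, and multiplicativity in $g$ from the bimultiplicative structure of lemma-definition~\ref{C2} combined with corollary~\ref{Add} and remark~\ref{2.5} (including your correct observation that one must use the full compatibility diagrams of the bimultiplicative $2$-isomorphism, not just the object-level isomorphisms). The anti-symmetry in the last two slots is likewise handled as in the paper.

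The gap sits precisely at the point you flag as the main obstacle: you never actually prove a transposition identity such as $C^\calL_3(g,g',g'')=-C^\calL_3(g',g,g'')$, and the mechanism you sketch for it is not the right one. The two sides are the commutators $\on{Comm}(C^\calL_g)(g',g'')$ and $\on{Comm}(C^\calL_{g'})(g,g'')$, which a priori are automorphisms of \emph{different} objects ($C^\calL_2(g,g'g'')$ and $C^\calL_2(g',gg'')$) and are built from braidings in $\calP$ of entirely different pairs of objects ($c_{C_2(g,g'),C_2(g,g'')}$ versus $c_{C_2(g',g),C_2(g',g'')}$); there is no single pair of braiding isomorphisms whose ``order is reversed'' when $g$ and $g'$ are swapped, and the equality $C^2=\on{id}$ in $B\calP$ --- which the paper uses only to establish anti-symmetry of the $1$-morphism $C^\calL_2$ itself, inside lemma-definition~\ref{C2} --- does not by itself produce the sign. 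What the paper actually does is prove the \emph{sign-free cyclic} identity $C_3(g,g',g'')=C_3(g',g'',g)$: it constructs a canonical isomorphism \eqref{comm=comm} from a $2$-commutative diagram that uses the $2$-isomorphism data \eqref{assoc} of the central extension, proves a lemma identifying \eqref{comm=comm} with the commutativity constraint of $\calP$, and then, using that lemma, recognizes $C_3(g,g',g'')$ as the deviation of the isomorphism \eqref{2nd isom} from the unique Mac Lane coherence isomorphism; unwinding \eqref{2nd isom} via the diagram \eqref{constrC2} yields a closed diagram whose rotational symmetry gives the cyclic identity. Full anti-symmetry then follows formally from cyclicity together with anti-symmetry in the last two slots, so the sign ultimately comes from $c_{y,x}c_{x,y}=\id$ in $\calP$, not from $C^2=\on{id}$ in $B\calP$. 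To complete your proof you would need either to reproduce this cyclic-invariance argument or to supply a comparably detailed diagram chase for the direct transposition; your sketch does neither.
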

\begin{proof} Let us see the
tri-multiplicativity of the map $C^\calL_3$. The multiplicativity of
this map with respect to $g'$ or $g''$ follows from
lemma-definition~\ref{comm}. The multiplicativity of this map with
respect to $g$ follows from lemma-definition~\ref{C2}  and
corollary~\ref{Add}.

The hard part is to prove now that the map $C^\calL_3$ is
anti-symmetric. Let us write $C_2$ instead of $C_2^\calL$, and $C_3$
instead of $C_3^{\calL}$  for simplicity. Let $(g,g',g'')\in Z_3$.
First of all, let us observe that by definition, there is a
canonical isomorphism
\begin{equation}\label{comm=comm}C_2(g,g'g'')+C_2(g',g'')\simeq C_2(g',g'')+C_2(g,g''g')\end{equation} induced by the following 2-commutative diagram
\[\xymatrix{
\calL_{gg'g''}\ar[r]\ar@/^1.5pc/|{\calZ(C_2(g,g'g''))}[rrr]\ar_{\calZ(C_2(g',g''))}[ddd]&\calL_g+\calL_{g'g''}\ar[r]\ar[d]&\calL_{g'g''}+\calL_g\ar[r]\ar[d]&\calL_{gg'g''}\ar^{\calZ(C_2(g',g''))}[ddd]\\
&\calL_g+(\calL_{g'}+\calL_{g''})\ar[r]\ar[d]&(\calL_{g'}+\calL_{g''})+\calL_g\ar[d]&\\
&\calL_g+(\calL_{g''}+\calL_{g'})\ar[r]\ar[d]&(\calL_{g''}+\calL_{g'})+\calL_g\ar[d]&\\
\calL_{gg''g'}\ar[r]\ar@/_1.5pc/|{\calZ(C_2(g,g''g'))}[rrr]&\calL_g+\calL_{g''g'}\ar[r]&\calL_{g'g''}+\calL_g\ar[r]&\calL_{gg''g'}
}\]
The following lemma can be checked using the definition of $B\calP$.
\begin{lem}The isomorphism \eqref{comm=comm} is the same as the commutativity constraint in $\calP$.
\end{lem}
Now, there are two isomorphisms between $(C_2(g,g')+C_2(g,g''))+C_2(g',g'')$ and $C_2(g',g'')+(C_2(g,g'')+C_2(g,g'))$. Namely, the first isomorphism is obtained by the associativity and commutativity constraints in $\calP$. (Recall that such isomorphism is unique by the "Mac Lane's
coherence theorem" for Picard category.) The second isomorphism is
\begin{multline}\label{2nd isom}(C_2(g,g')+C_2(g,g''))+C_2(g',g'')\simeq C_2(g,g'g'')+C_2(g',g'')\stackrel{\eqref{comm=comm}}{\simeq}\\
C_2(g',g'')+C_2(g,g''g')\simeq
C_2(g',g'')+(C_2(g,g'')+C_2(g,g')).\end{multline} By the lemma, the
difference between these two isomorphisms is $C_3(g,g',g'')$. If we
recall the definition of $C_2(g,g'g'')\simeq C_2(g,g')+C_2(g,g'')$
by \eqref{constrC2}, we see that the isomorphism \eqref{2nd isom}
can be represented by the following diagram
\[
\tiny
\xymatrix@C=15pt@R=20pt{&\calL_{g'gg''}\ar^{\calZ(C_2(g,g''))}[rrr]\ar@{=>}[dr]&&&\calL_{g'g''g}\ar[ddr]^{\calZ(C_2(g',g''))}\ar@{=>}[dl]&\\
&&\calL_{g'}+\calL_g+\calL_{g''}\ar[r]&\calL_{g'}+\calL_{g''}+\calL_g\ar[rd]&&\\
\calL_{gg'g''}\ar@{=>}[r]\ar^{\calZ(C_2(g,g'))}[uur]\ar_{\calZ(C_2(g',g''))}[ddr]&\calL_{g}+\calL_{g'}+\calL_{g''}\ar[ur]\ar[dr]&&&\calL_{g''}+\calL_{g'}+\calL_g&\calL_{g''g'g}\ar@{=>}[l]\\
&&\calL_{g}+\calL_{g''}+\calL_{g'}\ar[r]&\calL_{g''}+\calL_{g}+\calL_{g'}\ar[ru]&&\\
&\calL_{gg''g'}\ar@{=>}[ur]\ar_{\calZ(C_2(g,g''))}[rrr]&&&\calL_{g''gg'}\ar@{=>}[ul]\ar_{\calZ(C_2(g,g'))}[uur]&
}\]
This diagram clearly implies that $C_3(g,g',g'')=C_3(g',g'',g)$. This together with the fact
 that $C_3(g,g',g'')=-C_3(g,g'',g')$ (because the map $\on{Comm}(C_g^{\calL})$ is anti-symmetric) implies that
  $C_3$ is anti-symmetric.
\end{proof}

\begin{cor} \label{trivial} (i) If two central extensions $\calL$ and $\calL'$ of $G$ by $\calP$
are isomorphic in $H^2(BG,\calP)$,
then $C_3^\calL=C_3^{\calL'}$.

(ii) $C_3^\calP$ is trivial.
\end{cor}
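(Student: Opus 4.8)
The plan is to derive both assertions formally from Proposition~\ref{C3} and the lemma immediately preceding it, together with the elementary invariance properties of the map $\on{Comm}$ established in Section~\ref{hompic}; no new categorical coherence needs to be checked, since all the genuine work was already carried out in Lemma-Definition~\ref{comm}, Lemma-Definition~\ref{C2} and Proposition~\ref{C3}. The role of this corollary is only to package those results into an invariance and a vanishing statement for $C^\calL_3$.

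For part (i), I would begin from the defining formula $C^\calL_3(g,g',g'')=\on{Comm}(C^\calL_g)(g',g'')$. Fix $g\in G$. By part (i) of the lemma preceding Proposition~\ref{C3}, an isomorphism $\calL\simeq\calL'$ in $H^2(BG,\calP)$ induces, for this $g$, an isomorphism $C^\calL_g\simeq C^{\calL'}_g$ in $H^1(BZ_G(g),\calP)$. Remark~\ref{2.5} then says that isomorphic objects of $H^1(BZ_G(g),\calP)$ have equal commutators, so that $\on{Comm}(C^\calL_g)=\on{Comm}(C^{\calL'}_g)$ as maps on the commuting pairs in $Z_G(g)$. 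Evaluating at $(g',g'')$ and letting $g$ vary, so that $(g,g',g'')$ ranges over all of $Z_3$, yields $C^\calL_3=C^{\calL'}_3$.

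For part (ii), I would again use $C^\calP_3(g,g',g'')=\on{Comm}(C^\calP_g)(g',g'')$. By part (ii) of the same preceding lemma, $C^\calP_g$ is the trivial homomorphism in $H^1(BZ_G(g),\calP)$ for every $g$. Since the commutator of a trivial homomorphism vanishes --- this is the last sentence of Corollary~\ref{2.8}, and can also be seen directly from the construction in Lemma-Definition~\ref{comm}, because the commutativity constraint entering the definition is then $c_{e,e}=\id$ --- we conclude $\on{Comm}(C^\calP_g)=0$, hence $C^\calP_3$ is trivial. The only point requiring a little care is purely bookkeeping: one must apply the invariance and vanishing statements not to $G$ itself but to the centralizer $Z_G(g)$ and to the commutator restricted to its commuting pairs, matching a commuting triple $(g,g',g'')\in Z_3$ with the commuting pair $(g',g'')$ in $Z_G(g)$. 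Once this identification is fixed there is no real obstacle, as everything reduces to the already-proved properties of $\on{Comm}$ recorded in Section~\ref{hompic}.
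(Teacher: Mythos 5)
Your proposal is correct and follows exactly the route the paper intends: Corollary~\ref{trivial} is stated there without proof, as an immediate consequence of the definition $C_3^\calL(g,g',g'')=\on{Comm}(C_g^\calL)(g',g'')$, parts (i) and (ii) of the lemma preceding Proposition~\ref{C3}, Remark~\ref{2.5}, and the final assertion of Corollary~\ref{2.8} --- precisely the chain of citations you assemble. Your extra care in applying these statements to the centralizer $Z_G(g)$ rather than to $G$ itself, together with the observation that the trivial homomorphism case reduces to $c_{e,e}=\id$ (so the abelian hypothesis in Corollary~\ref{2.8} causes no trouble), is exactly the right bookkeeping.
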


\begin{cor} \label{2Baer}
For any two central extensions $\calL$  and $\calL'$ of $G$ by
$\calP$ we have
$$
C_3^{\calL + \calL' } = C_3^{\calL} + C_3^{\calL'}  \mbox{.}
$$
\end{cor}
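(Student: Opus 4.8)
The plan is to reduce the additivity of $C_3$ to the already established additivity of $\on{Comm}$ (corollary~\ref{Add}) by passing through the ``fixed first variable'' homomorphisms $C_g^\calL\in H^1(BZ_G(g),\calP)$. Recall from the definition preceding proposition~\ref{C3} that $C_3^\calL(g,g',g'')=\on{Comm}(C_g^\calL)(g',g'')$, where $C_g^\calL$ is the homomorphism $Z_G(g)\to\calP$, $g'\mapsto C_2^\calL(g,g')$, whose multiplicativity (and hence its structure as an object of $H^1(BZ_G(g),\calP)$) comes from the bimultiplicativity of $C_2^\calL$ in the second variable. So it suffices to fix $g$ and to identify $C_g^{\calL+\calL'}$ with $C_g^\calL+C_g^{\calL'}$ inside $H^1(BZ_G(g),\calP)$.

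First I would invoke lemma~\ref{lembaer}, which provides a bimultiplicative $2$-isomorphism between $C_2^{\calL+\calL'}$ and $C_2^\calL+C_2^{\calL'}$. Fixing $g$ and letting $g'$ range over $Z_G(g)$, this $2$-isomorphism specializes to a natural isomorphism $C_2^{\calL+\calL'}(g,g')\simeq C_2^\calL(g,g')+C_2^{\calL'}(g,g')$, i.e. a natural transformation $C_g^{\calL+\calL'}\simeq C_g^\calL+C_g^{\calL'}$ of functors on $Z_G(g)$. The phrase ``respects the bimultiplicative structure'' in lemma~\ref{lembaer} is precisely what guarantees that this natural transformation is compatible with the multiplicativity constraints in the second variable, and hence is a \emph{monoidal} natural transformation, that is, an isomorphism in $H^1(BZ_G(g),\calP)$. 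Granting this, remark~\ref{2.5} gives $\on{Comm}(C_g^{\calL+\calL'})=\on{Comm}(C_g^\calL+C_g^{\calL'})$, and corollary~\ref{Add} gives $\on{Comm}(C_g^\calL+C_g^{\calL'})=\on{Comm}(C_g^\calL)+\on{Comm}(C_g^{\calL'})$. Evaluating at $(g',g'')$ and unwinding the definition of $C_3$ then yields $C_3^{\calL+\calL'}(g,g',g'')=C_3^\calL(g,g',g'')+C_3^{\calL'}(g,g',g'')$ for every $(g,g',g'')\in Z_3$, which is the claim.

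The main obstacle is exactly the step isolating the additivity of $C_2$: one must check that restricting the global bimultiplicative $2$-isomorphism of lemma~\ref{lembaer} to the slice $(g,Z_G(g))$ really produces an isomorphism in $H^1(BZ_G(g),\calP)$ rather than a mere pointwise family of isomorphisms in $\calP$. This is where the compatibility with multiplication in the second variable---encoded in the commutativity of diagram~\eqref{ccoomm} (and its companion involving $C_2^{\calL+\calL'}(g,g'g'')$) from the proof of lemma~\ref{lembaer}---is essential: it is precisely the coherence datum needed to make the isomorphism monoidal along $Z_G(g)$. Everything else is formal, since $\on{Comm}$ was already shown to be additive and to depend only on the isomorphism class of its argument.
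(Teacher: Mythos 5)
Your proof is correct and follows essentially the same route as the paper: the paper's own proof of this corollary is exactly the one-line citation of lemma~\ref{lembaer}, corollary~\ref{Add} and the definition of $C_3$, which you have simply fleshed out. Your identification of the key coherence point---that the ``respects the bimultiplicative structure'' clause of lemma~\ref{lembaer} (in particular the companion diagram to \eqref{ccoomm} involving $C_2^{\calL+\calL'}(g,g'g'')$) is what makes the restricted isomorphism $C_g^{\calL+\calL'}\simeq C_g^\calL+C_g^{\calL'}$ monoidal, so that remark~\ref{2.5} applies---is precisely the content the paper leaves implicit.
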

\begin{proof} This follows from lemma~\ref{lembaer}, corollary~\ref{Add} and the definition of map $C_3$.
\end{proof}

\begin{rmk}
If $\calP =BA$, where $A$ is an abelian group, then a central
extension $\calL$ of a group $G$ by the Picard groupoid $\calP$ is a
$gr$-category such that these $gr$-categories are classified by the
group $H^3(G,A)$ with the trivial $G$-module $A$. In this case the
map $C_3^{\calL}$ coincides with the symmetrization of corresponding
$3$-cocycle, see~\cite[\S 4]{Br2}. (It follows from
remarks~\ref{Del},~\ref{usual} and~\cite[prop.~10]{O2}.)
\end{rmk}

\section{Tate vector spaces} \label{tate}
\subsection{The category of Tate vector spaces} \label{tvecs}
We first review the definition of Tate vector spaces, following
\cite{O1,AK}. Let us fix a base field $k$.

Let us recall that for an exact category $\calE$ in the sense of Quillen (cf. \cite{Q}), Beilinson associates another exact category $\lim\limits_{\longleftrightarrow}\calE$, which is again an exact category (cf. \cite{Be}). In nowadays terminology, this is the category of locally compact objects of $\calE$.

For an exact category $\calE$, let $\hat{\calE}$ denote the category
of left exact additive contravariant functors from the category
$\calE$ to the category of abelian groups. This is again an exact
category (in fact an abelian category), in which arbitrary small
colimits exist. The Yoneda embedding $h:\calE\to\hat{\calE}$ is
exact. Then the category $\on{Ind}(\calE)$ of (strict) ind-objects
of $\calE$, is the full subcategory of $\hat{\calE}$ consisting of
objects of the form $\mathop{\underrightarrow{\lim}}\limits_{i\in
I}h(X_i)$, where $I$ is a filtered small category, and
$X_i\in\calE$, such that for $i\to j$ in $I$, the map $X_i\to X_j$
is an admissible monomorphism. This category is a natural exact
category. Likewise, one can define $\on{Pro}(\calE)$ as
$\on{Ind}(\calE^{op})^{op}$.

\begin{dfn}Let $\calE$ be an exact category. Then $\lim\limits_{\longleftrightarrow}\calE$ is the full subcategory of
$\on{Pro}(\on{Ind}(\calE))$ consisting of objects that can be represented as
 $\mathop{\underleftarrow{\lim}}\limits_{i\in I^{op}}
 \mathop{\underrightarrow{\lim}}\limits_{j\in I}h(X_{ij})$ such that
 for any $i\to i', j\to j'$, the following diagram is cartesian (which is automatically cocartesian then).
\[\xymatrix{X_{ij}\ar[r]\ar[d]&X_{ij'}\ar[d]\\
                   X_{i'j}\ar[r]&X_{i'j'}
}\]
\end{dfn}

One can show that $\lim\limits_{\longleftrightarrow}\calE$ is an exact category and the following embedding
$\lim\limits_{\longleftrightarrow}\calE\to\on{Pro}(\on{Ind}(\calE))$ is exact. In addition, there is a
natural embedding $\lim\limits_{\longleftrightarrow}\calE\to\on{Ind}(\on{Pro}(\calE))$ which is again exact.
 It is clear that the natural embedding $\on{Ind}(\calE)\to\on{Pro}(\on{Ind}(\calE))$ lands
 in $\lim\limits_{\longleftrightarrow}\calE$ and similarly the natural embedding
  $\on{Pro}(\calE)\to\on{Ind}(\on{Pro}(\calE))$ also lands in $\lim\limits_{\longleftrightarrow}\calE$.

\begin{dfn}Define $\Tate_0$ to be the category of finite dimensional vector spaces,
together with its canonical exact category structure.
Define $\Tate_n=\lim\limits_{\longleftrightarrow}\Tate_{n-1}$, together with the canonical
 exact category structure given by Beilinson.
\end{dfn}

There is a canonical forgetful functor $F_n:\Tate_n\to \calT op$, where $\calT op$ denotes the category of
 topological vector spaces. As is shown in \cite{O1}, the functor is fully faithful when $n=1$,
  but this is in general not the case when $n>1$.

\begin{dfn} \label{latcol} Let $V$ be an object of $\Tate_n$. A {\em lattice} $L$ of $V$ is an object in $\Tate_n$
 which  actually belongs to $\on{Pro}(\Tate_{n-1})$, together with an admissible monomorphism $L\to V$
 such that the object $V/L$ belongs to $\on{Ind}(\Tate_{n-1})$.  A {\em colattice} $L^c$ of $V$ is an object in
 $\Tate_n$
   which actually belongs to $\on{Ind}(\Tate_{n-1})$, together with an addmissible monomorphism $L^c\to V$
   such that the object $V/L^c$ belongs to $\on{Pro}(\Tate_{n-1})$.
\end{dfn}

It is clear that if $L$ is a lattice of $V$ and $L^c$ is a colattice, then $L\cap L^c$ belongs to $\Tate_{n-1}$.

The main players of this paper are $\Tate_1$ and $\Tate_2$. The
category $\Tate_1$ is just the category of locally linearly compact
$k$-vector spaces. A typical object in $\Tate_1$ is the field of
formal Laurent series $k((t))$, i.e. $k((t))$ is the field of
fractions of the ring $k[[t]]$. $k((t))$ is equipped with the
standard  topology, where the base of neighbourhoods of zero
consists of integer powers of the maximal ideal of $k[[t]]$. The
subspace $k[[t]]$ is a lattice in $k((t))$ and $k[t^{-1}]$ is a
colattice. Observe that $k[t]\subset k((t))$ is neither a lattice
nor a colattice, because the subspace $k[t]$ is not closed in the
topological space $k((t))$. Therefore the embedding $k[t]
\hookrightarrow k((t))$ is not an admissible monomorphism, since any
admissible (exact) triple in the category $\Tate_1$ is of the form:
$$
0  \lrto V_1 \lrto V_2  \lrto V_3 \lrto 0 \mbox{,}
$$
where the locally linearly compact vector space $V_1$ is a closed
vector subspace in a locally linearly compact vector space $V_2$,
and the locally linearly compact vector space $V_3$ has the quotient
topology on the quotient vector space.

A typical object in $\Tate_2$ is $k((t))((s))$, since
$$
k((t))((s)) =\mathop{\underleftarrow{\lim}}\limits_{l \in \bbZ}
 \mathop{\underrightarrow{\lim}}\limits_{m \le l} s^m k((t))[[s]]
 \,/\,
 s^l k((t))[[s]] =
\mathop{\underrightarrow{\lim}}\limits_{m \in \bbZ}
\mathop{\underleftarrow{\lim}}\limits_{l \ge m }
 s^m k((t))[[s]] \,/ \,
 s^l k((t))[[s]] \mbox{,}
$$
and $s^m k((t))[[s]] \,/\,
 s^l k((t))[[s]]$ is a locally linearly compact $k$-vector space.

The $k$-space $k((t))[[s]]$ is a lattice, and the $k$-space
$k((t))[s^{-1}]$ is a colattice in the $k$-space $k((t))((s))$. As
just mentioned above, it is not enough to  regard them as
topological vector spaces. On the other hand $k[[t]]((s))$ is not a
lattice in $k((t))((s))$ although the natural map $k[[t]]((s))\to
k((t))((s))$ is an admissible monomorphism.

\begin{rmk}
The category $\Tate_n$ coincides with  the category of  complete $C_n$-spaces from~\cite{O1}.
\end{rmk}

\subsection{Determinant theories of Tate vector spaces}
\label{detth}

We consider $\Tate_0$ as an exact category. Then $\det: (\Tate_0,
\on{isom} )\to\calP ic^\bbZ$  (see~\eqref{det}) is a functor
satisfying the following additional  property: for each injective
homomorphism $V_1\to V$ in the category $\Tate_0$, there is a
canonical isomorphism
\begin{equation}\label{tensor}\det(V_1)\otimes\det(V/V_1)\simeq\det(V) \mbox{,}\end{equation}
such that:

(i) for $V_1=0$ (resp. $V_1=V$), equality \eqref{tensor} is the same as
\begin{equation}\label{unit1}\ell_0\otimes\det(V)\simeq\det(V)\end{equation}
resp.
\begin{equation}\label{unit2}\det(V)\otimes\ell_0\simeq\det(V) \mbox{,}\end{equation}
where $\ell_0$ is the trivial $k$-line of degree zero.

(ii) For any diagram
\begin{equation}\label{2x3}
\xymatrix{0\ar[r]&U_1\ar[r]\ar^\simeq[d]&U\ar[r]\ar^\simeq[d]&U/U_1\ar[r]\ar^\simeq[d]&0\\
0\ar[r]&V_1\ar[r]&V\ar[r]&V/V_1\ar[r]&0,
}
\end{equation}
the following diagram is commutative
\begin{equation}\label{diag1}
\xymatrix{
\det(U_1)\otimes\det(U/U_1)\ar[r]\ar[d]&\det(U)\ar[d]\\
\det(V_1)\otimes\det(V/V_1)\ar[r]&\det(V).}
\end{equation}

(iii) For any diagram
\begin{equation}\label{3x3}
\xymatrix{&0\ar[d]&0\ar[d]&0\ar[d]&\\
0\ar[r]&U_1\ar[r]\ar[d]&V_1\ar[r]\ar[d]&W_1\ar[r]\ar[d]&0\\
0\ar[r]&U\ar[r]\ar[d]&V\ar[r]\ar[d]&W\ar[r]\ar[d]&0\\
0\ar[r]&U/U_1\ar[r]\ar[d]&V/V_1\ar[r]\ar[d]&W/W_1\ar[r]\ar[d]&0\\
&0&0&0&,}\end{equation} the following diagram is commutative
\begin{equation}\label{diag2}
\xymatrix{(\det(U_1)\otimes\det(U/U_1))\otimes(\det(W_1)\otimes\det(W/W_1))\ar[rr]\ar^{ass.\ and\ comm.\ constraints}[d]&&\det(U)\otimes\det(W)\ar[dd]\\
(\det(U_1)\otimes\det(W_1))\otimes(\det(U/U_1)\otimes\det(W/W_1))\ar[d]&&\\
\det(V_1)\otimes\det(V/V_1)\ar[rr]&&\det(V)}\end{equation}

\begin{dfn}Let $\calP$ be a Picard groupoid. A determinant functor from the category $(\Tate_0, \on{isom})$ to
$\calP$ is a functor $D: (\Tate_0, \on{isom})\to\calP$ together with isomorphisms \eqref{tensor} satisfying equalities and diagrams  \eqref{unit1}-\eqref{diag2}, where we have to change  notation $"\det"$
to notation $"D"$ everywhere in these formulas.
\end{dfn}

There is the following obvious proposition.
\begin{prop}Let $D:(\Tate_0, \on{isom} )\to\calP$ be a determinant functor. Then there is a $1$-homomorphism of Picard groupoids $\tilde{D}:\calP ic^\bbZ\to\calP$  and a monoidal natural transformation $\varepsilon:\tilde{D}\circ\det\simeq D$. Furthermore, the pair $(\tilde{D},\varepsilon)$ is unique up to a unique isomorphism.
\end{prop}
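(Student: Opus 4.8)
The plan is to exhibit $\det\colon(\Tate_0,\on{isom})\to\calP ic^\bbZ$ (see \eqref{det}) as the universal determinant functor, so that $D$ factors through it. I would first extract uniqueness, since it dictates the construction. If $(\tilde D,\varepsilon)$ exists, then for a one-dimensional space $\ell$ the isomorphism $\varepsilon$ gives $\tilde D(\ell,1)=\tilde D(\det\ell)\simeq D(\ell)$. Every object of $\calP ic^\bbZ$ is isomorphic to a tensor product of objects $(\ell,1)$ and their inverses (as $\pi_0(\calP ic^\bbZ)\simeq\bbZ$ is generated by degree $1$), and every morphism is, after trivialising, multiplication by a scalar in $\pi_1(\calP ic^\bbZ)=k^\times$. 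Since a $1$-homomorphism of Picard groupoids preserves $\otimes$, the unit, and inverses up to coherent isomorphism, $\tilde D$ is thereby determined on all objects and morphisms up to a unique isomorphism. This proves the asserted uniqueness and tells us what $\tilde D$ must be.

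The key lemma for existence is that $D$ already factors through $\det$ on automorphisms. Fix $V\in\Tate_0$ and $g\in\Aut(V)=\GL(V)$; under the canonical identification $\Aut_\calP(D(V))\simeq\pi_1(\calP)$ I claim $D(g)$ depends only on $\det(g)\in k^\times$. Indeed, if $g$ preserves an admissible subspace $V_1$ and acts as the identity on both $V_1$ and $V/V_1$, then applying the naturality axiom \eqref{diag1} to the isomorphism $(\on{id},g,\on{id})$ of the exact sequence $0\to V_1\to V\to V/V_1\to0$ forces $D(g)=\on{id}$. Every transvection is of this form, and transvections generate $\on{SL}(V)$ over the field $k$; hence $D$ kills $\on{SL}(V)$, so $\lambda\mapsto D(g)$ (for any $g$ with $\det g=\lambda$) is a well-defined homomorphism $\delta_D\colon k^\times\to\pi_1(\calP)$. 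A short argument using \eqref{unit1}, \eqref{unit2} and \eqref{diag1} applied to the split sequence $0\to V\to V\oplus k\to k\to0$ shows $\delta_D$ is independent of $V$.

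Now I construct $\tilde D$. On objects I set $\tilde D(\ell,n):=D(\ell\oplus k^{n-1})$ for $n\geq1$, together with the canonical identification $\det(\ell\oplus k^{n-1})\simeq(\ell,n)$, and I extend to $n\leq0$ using the chosen inverses in $\calP$, e.g. $\tilde D(\ell,0):=D(\ell)+(-D(k))$. On a morphism $\phi\colon(\ell_1,n)\to(\ell_2,n)$ I choose any $k$-linear isomorphism $g$ between the chosen representatives, compare $\det g$ with the morphism induced by $\phi$ to get a scalar $\mu\in k^\times$, and set $\tilde D(\phi):=\delta_D(\mu)^{-1}\, D(g)$; the key lemma guarantees independence of $g$. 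The tensor-structure isomorphism $\tilde D(L_1)+\tilde D(L_2)\simeq\tilde D(L_1\otimes L_2)$ is built from the determinant isomorphism \eqref{tensor} of the split sequence, followed by the comparison of representatives supplied by the key lemma (both sides having equal dimension and canonically identified top exterior powers). Finally $\varepsilon_V\colon\tilde D(\det V)\simeq D(V)$ is the key-lemma comparison between $D$ of the chosen representative of $\det V$ and $D(V)$.

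It then remains to check that $\tilde D$ is a functor, that its tensor-structure isomorphisms satisfy the two coherence diagrams of a $1$-homomorphism (the associativity square and the commutativity square \eqref{dz}), and that $\varepsilon$ is a monoidal natural transformation. Functoriality and the associativity coherence are routine diagram chases from the naturality axiom \eqref{diag1} and the three-step-filtration compatibility of \eqref{tensor}. The main obstacle, and the conceptual heart of the proposition, is the commutativity coherence: one must show that $\tilde D$ reproduces the sign $(-1)^{mn}$ commutativity constraint of $\calP ic^\bbZ$ rather than a naive symmetric one. This is exactly where the graded structure is essential, and it follows from axiom \eqref{diag2} (the $3\times3$ diagram) together with the observation that the reordering automorphism of $V_1\oplus V_2$ has determinant $(-1)^{(\dim V_1)(\dim V_2)}$, so that $D$ of the swap equals $\delta_D((-1)^{mn})$ by the key lemma and thus matches the commutativity constraint of $\calP$. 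The remaining verifications are mechanical.
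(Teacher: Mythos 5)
The paper itself offers no proof to compare against: the proposition is introduced there with the words ``There is the following obvious proposition'' and is left unproved. Your argument is correct and supplies exactly what the authors leave implicit, along the standard route (essentially Deligne's argument that $\det$ is the universal determinant functor). The two pillars are the right ones: (1) the key lemma that $D(g)\in\Aut_\calP(D(V))\simeq\pi_1(\calP)$ depends only on $\det g$, proved by applying \eqref{diag1} to the automorphism $(\id,g,\id)$ of $0\to V_1\to V\to V/V_1\to 0$ when $g$ is a transvection and using that transvections generate $\on{SL}(V)$; and (2) the recognition that the commutativity coherence \eqref{dz} is the only non-formal compatibility, and that it holds because the sign $(-1)^{mn}$ in the constraint of $\calP ic^\bbZ$ is precisely the determinantal sign of the shuffle. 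Two points deserve tightening when this is written out in full. First, for the independence of $\delta_D$ of $V$, the cleanest argument applies \eqref{diag1} to the split sequence $0\to V\to V\oplus k\to k\to 0$ twice, with the automorphisms $(g,\,g\oplus\id,\,\id)$ and $(\id,\,\id\oplus\lambda,\,\lambda)$: the first gives $\delta_{D,V}=\delta_{D,V\oplus k}$, the second $\delta_{D,V\oplus k}=\delta_{D,k}$. Second, since $V_1\neq V_2$ in general, the ``reordering automorphism'' is really an isomorphism $\sigma\colon V_1\oplus V_2\to V_2\oplus V_1$, and the coherence argument splits cleanly into three steps: (a) axiom \eqref{diag2}, applied to the $3\times 3$ diagram with rows $0\to 0\to V_2\to V_2\to 0$, $0\to V_1\to V_1\oplus V_2\to V_2\to 0$, $0\to V_1\to V_1\to 0\to 0$, gives (modulo the unit isomorphisms \eqref{unit1}, \eqref{unit2}) that $\alpha_{12}=\alpha'\circ c_{D(V_1),D(V_2)}$, where $\alpha_{12}\colon D(V_1)\otimes D(V_2)\to D(V_1\oplus V_2)$ is the structure map of $0\to V_1\to V_1\oplus V_2\to V_2\to 0$ and $\alpha'\colon D(V_2)\otimes D(V_1)\to D(V_1\oplus V_2)$ that of $0\to V_2\to V_1\oplus V_2\to V_1\to 0$; (b) axiom \eqref{diag1} applied to the isomorphism $(\id,\sigma,\id)$ of sequences gives $\alpha'=D(\sigma)^{-1}\circ\alpha_{21}$; (c) the sign computation shows that $\Lambda^{\mathrm{top}}\sigma$, read through the concatenation identifications, is exactly the constraint $c^{\calP ic^\bbZ}$ (the sign $(-1)^{mn}$ appears on both sides), so $\tilde D(c^{\calP ic^\bbZ})=D(\sigma)$ with no $\delta_D$-correction. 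Chaining (a)--(c) yields the square \eqref{dz} for $\tilde D$. These are refinements rather than repairs; your proof stands, and it makes explicit why the proposition, while routine, is ``obvious'' only after the $\on{SL}$-triviality lemma is in hand.
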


\begin{rmk}All the above discussions are valid when one replaces $k$ by a noetherian commutative ring $A$, and replaces $\Tate_0$ by the category of finitely generated projective $A$-modules.
\end{rmk}

Next we turn to $\Tate_1$. The following result is fundamental and
is due to Kapranov, \cite{Kap}, but see also~\cite[\S 5.1-5.3]{Dr}.
\begin{prop}\label{det gerbe} There is a natural functor
\[\Det \: : \: (\Tate_1, \on{isom}) \to B\calP ic^\bbZ,\]
and for each admissible monomorphism $\calV_1\to \calV$ there is a
$1$-isomorphism
\begin{equation} \label{aaa}
\Det(\calV_1)+\Det(\calV/\calV_1)\to \Det(\calV)
\end{equation}
such that if $\calV_1=0$ (resp. $\calV_1=\calV$), this
$1$-isomorphism is the canonical $1$-isomorphism
\[\calP+\calD et(\calV)\simeq\calD et(\calV)\]
resp.
\[\calD et(\calV)+\calP\simeq\calD et(\calV) \mbox{.}\]
For each admissible diagram \eqref{2x3} of $1$-Tate vector spaces,
the corresponding diagram  \eqref{diag1} is commutative. In
addition, for each admissible diagram \eqref{3x3} of $1$-Tate vector
spaces, there is a $2$-isomorphism for the corresponding
diagram~\eqref{diag2}.
\end{prop}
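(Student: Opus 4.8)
The plan is to build the graded-determinant torsor $\Det(\calV)$ directly out of the finite-dimensional determinant functor $\det\colon(\Tate_0,\on{isom})\to\calP ic^\bbZ$ of \eqref{det}, exploiting the fact that an object of $\Tate_1$ has no absolute determinant but any two of its lattices differ by a finite-dimensional object. In this way every assertion of the proposition is reduced to the additivity data \eqref{tensor} and the compatibilities \eqref{unit1}--\eqref{diag2} already available for $\det$ on $\Tate_0$. Functoriality in isomorphisms will be automatic, since an isomorphism of $1$-Tate spaces carries lattices to lattices and is compatible with $\det$ on the finite-dimensional subquotients.

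First I would fix $\calV\in\Tate_1$ and introduce the relative determinant of two lattices. If $L,L'$ are lattices then so is $L\cap L'$, and both $L/(L\cap L')$ and $L'/(L\cap L')$ lie in $\Tate_0$; I set
\[\det(L:L'):=\det\bigl(L/(L\cap L')\bigr)\otimes\det\bigl(L'/(L\cap L')\bigr)^{-1}\in\calP ic^\bbZ.\]
Comparing $L,L',L''$ through $L\cap L'\cap L''$ and applying \eqref{tensor} to the resulting finite-dimensional pieces yields canonical cocycle isomorphisms $\det(L:L')\otimes\det(L':L'')\simeq\det(L:L'')$ together with $\det(L:L)\simeq\ell_0$. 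I then define $\Det(\calV)$ to be the groupoid whose objects are pairs $(L,\mu)$ with $L$ a lattice and $\mu$ a graded line, a morphism $(L,\mu)\to(L',\mu')$ being an isomorphism $\mu\simeq\mu'\otimes\det(L:L')$ compatible with these cocycle isomorphisms. The formula $t+(L,\mu):=(t\otimes\mu,\,L)$ makes $\Det(\calV)$ a $\calP ic^\bbZ$-torsor: it is nonempty, it is connected because the relative determinants identify all objects, and the stabilizer of an object is $k^\times=\pi_1(\calP ic^\bbZ)$ with the grading recorded by $\pi_0(\calP ic^\bbZ)\simeq\bbZ$.

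Next I would construct the $1$-isomorphism \eqref{aaa} attached to an admissible monomorphism $\calV_1\to\calV$ with quotient $\calV/\calV_1$. A lattice $L_1$ of $\calV_1$ and a lattice $\bar L$ of $\calV/\calV_1$ determine a lattice $L$ of $\calV$ (the preimage of $\bar L$) fitting in an admissible triple $0\to L_1\to L\to\bar L\to 0$; the assignment $((L_1,\mu_1),(\bar L,\bar\mu))\mapsto(L,\mu_1\otimes\bar\mu)$, together with the multiplicativity \eqref{tensor} of $\det$ measuring how $L$ varies with the chosen lattices, gives a $\calP ic^\bbZ$-equivariant functor, hence a $1$-isomorphism of torsors. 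For $\calV_1=0$ or $\calV_1=\calV$ this reduces to the identity on the single nontrivial factor, which by \eqref{unit1}--\eqref{unit2} is the canonical unit isomorphism, giving the stated normalization. The commutativity of \eqref{diag1} for a morphism between two admissible triples is then inherited lattice by lattice from its finite-dimensional counterpart for $\det$ on $\Tate_0$, and for a $3\times3$ diagram of $1$-Tate spaces one chooses a compatible nine-fold system of lattices realizing \eqref{3x3} at the finite level and produces the required $2$-isomorphism filling \eqref{diag2} from the diagram \eqref{diag2} for $\det$ on $\Tate_0$.

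The hard part is coherence. The genuine work is to check that the cocycle isomorphisms for $\det(L:L')$, the transition isomorphisms defining \eqref{aaa}, and the diagrams \eqref{diag1} and \eqref{diag2} are mutually compatible and independent of all lattice choices; it is precisely in the $3\times3$ case that the full Picard-groupoid structure, rather than bare additivity of dimensions, is needed. I expect the most delicate step to be the construction of the $2$-isomorphism for \eqref{diag2}: the commutativity constraint of $\calP ic^\bbZ$ carries the sign $(-1)^{n_1n_2}$, so when the four corner determinant lines are reorganized in the two different ways one must track the gradings, i.e. the relative dimensions, to see that the reorganizations agree up to exactly the correct sign. This is the reason the target must be $\calP ic^\bbZ$ and not $\calP ic$, and it is where the careful bookkeeping concentrates.
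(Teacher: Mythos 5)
Your model of the torsor differs from the paper's, though the underlying idea (glue the finite-dimensional $\det$ of \eqref{det} along lattices) is the same. The paper, following Kapranov, defines $\calD et(\calV)$ as the category of graded-determinantal theories: rules $\Delta$ assigning to \emph{every} lattice $L\subset\calV$ an object of $\calP ic^\bbZ$, with coherent isomorphisms $\Delta(L_1)\otimes\det(L_2/L_1)\simeq\Delta(L_2)$ for $L_1\subset L_2$. You use the opposite, ``total space'' presentation: single pairs $(L,\mu)$ glued by relative determinants $\det(L:L')$. The two categories are canonically equivalent (send $(L,\mu)$ to the theory $L'\mapsto\mu\otimes\det(L':L)$), so your route is viable, and it is even directly covariant in $\calV$, whereas the paper has to invert arrows to pass from its naturally contravariant functor to a covariant one. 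What the paper's presentation buys is a choice-free construction of \eqref{aaa}: since a theory is already defined on all lattices, one simply sets $\Delta(L):=\Delta_1(L\cap\calV_1)\otimes\Delta_2(\epsilon(L))$, using that $L\cap\calV_1$ and $\epsilon(L)$ are automatically lattices of $\calV_1$ and $\calV/\calV_1$ for an \emph{arbitrary} lattice $L$ of $\calV$.

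Two of your assertions need repair. First, $\Det(\calV)$ is \emph{not} connected: a morphism $(L,\mu)\to(L',\mu')$ can exist only when $\deg\mu=\deg\mu'+\deg\det(L:L')$, so $\pi_0(\Det(\calV))\simeq\bbZ$; indeed this is forced, since condition (iii) in the definition of a $\calP$-torsor makes $\Det(\calV)$ equivalent to $\calP ic^\bbZ$, whose $\pi_0$ is $\bbZ$. The verification you should make is that for fixed $(L,\mu)$ the functor $x\mapsto x+(L,\mu)$ is an equivalence --- essentially surjective because $x:=\mu'\otimes\det(L:L')\otimes\mu^{-1}$ hits $(L',\mu')$, and fully faithful --- rather than the ``connected with stabilizer $k^\times$'' criterion, which is the one appropriate to $\calP ic$-torsors, not $\calP ic^\bbZ$-torsors. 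Second, in your construction of \eqref{aaa} the preimage of $\bar{L}$ in $\calV$ is not a lattice: it contains all of $\calV_1$, so it cannot sit in a triple $0\to L_1\to L\to\bar{L}\to 0$. You must instead choose a lattice $L$ \emph{adapted} to the data, i.e.\ with $L\cap\calV_1=L_1$ and $\epsilon(L)=\bar{L}$; such $L$ exist but are not unique, so you also need, for two adapted lattices $L,L'$, the canonical trivialization of $\det(L:L')$ coming from the canonical isomorphisms $L/(L\cap L')\simeq\bar{L}/\epsilon(L\cap L')\simeq L'/(L\cap L')$, and you must check that this trivialization is compatible with your gluing isomorphisms. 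This is precisely the bookkeeping that the paper's choice-free formula sidesteps; with it, your argument goes through, including the point you correctly isolate that the $2$-isomorphism for \eqref{diag2} requires the commutativity constraint of $\calP ic^\bbZ$ with its sign $(-1)^{n_1n_2}$.
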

\begin{rmk} Under conditions of proposition~\ref{det gerbe}, the $2$-isomorphisms which appear from
diagram~\eqref{diag2} satisfy further compatibility conditions.
\end{rmk}
\begin{proof}  For a $1$-Tate vector space $\calV$, we recall the definition of a graded-determinantal theory
 $\Delta$ on $\calV$. This is a rule that assign to every lattice $L \subset \calV$ an object $\Delta(L)$ from
 $\calP ic^\bbZ$ and to every lattices $L_1 \subset L_2 \subset \calV$ an isomorphism
$$
\Delta_{L_1, L_2} \: : \: \Delta(L_1) \otimes \det(L_2/L_1) \lrto \Delta(L_2)
$$
such that for any three lattices $L_1 \subset L_2 \subset L_3
\subset \calV$ the following diagram is commutative
$$
\xymatrix{
\Delta(L_1) \otimes \det(L_2/L_1) \otimes \det(L_3/L_2)\ar[r]\ar[d]& \Delta(L_1) \otimes \det(L_3/L_1)\ar[d]\\
\Delta(L_2) \otimes \det(L_3/L_2)\ar[r]& \Delta(L_3) \mbox{.}
}
$$
Let $\calD et(\calV)$ be the category of graded-determinantal
theories on $\calV$. This is a $\calP ic^\bbZ$-torsor, where for any
$x \in \calP ic^\bbZ$, $\Delta \in \calD et(\calV)$, we have
$(x+\Delta) (L) := x \otimes \Delta(L)$.

Now for an admissible (exact) sequence
$$
0 \lrto \calV_1 \lrto \calV \stackrel{\epsilon}{\lrto} \calV/\calV_1
\lrto 0
$$
the  $1$-isomorphism~\eqref{aaa}  is constructed as
 $$
\Delta(L) := \Delta_1(L \cap \calV_1) \otimes \Delta_2(\epsilon(L))
\mbox{,}
 $$
where $L$ is a lattice in $\calV$, $\Delta_1 \in \calD et(\calV_1)$,
$\Delta_2 \in \calD et(\calV/\calV_1)$, $\Delta \in \calD
et(\calV)$. (We used that the $k$-space $L \cap \calV_1$ is a
lattice in the $1$-Tate vector space $\calV_1$, and the $k$-space
$\epsilon(L)$ is a lattice in the $1$-Tate vector space $\calV_3$).

We note that, by construction,  $\calV \mapsto \calD et(\calV)$ is
naturally a contravariant functor from the category $(\Tate_1,
\on{isom})$ to the category $B\calP ic^\bbZ$. To obtain the
covariant functor we have to inverse arrows in the category
$(\Tate_1, \on{isom})$.
\end{proof}

\section{Applications to the case $G=\GL (k((t)))$ and $\GL (k((t))((s)))$}
\label{appl}
\subsection{Tame symbols} \label{ts}
Let us first review the tame symbols. Recall that if $K$ is a field
with discrete valuation $\nu:K^\times\to\bbZ$, and  $k$ denote its
residue field, then there are  so-called boundary maps for any $i
\in \mathbb{N}$
\[\partial_i \: : \:K^M_i(K) \lrto K^M_{i-1}(k),\]
where $K^M_i(F)$ denotes the $i$th Milnor K-group of a field $F$. Let us also recall that for a field $F$, the $i$th Milnor K-group $K^M_i(F)$ is the quotient of the abelian group $F^\times\otimes_\bbZ F^\times\otimes_\bbZ\cdots\otimes_\bbZ F^\times$ modulo the so-called Steinberg relations. Then the tame symbol is defined as the composition of the following maps
\[\{\cdot,\cdot\} \: : \:K^\times\otimes_\bbZ K^\times\lrto K^M_2(K)\stackrel{\partial_2}{\lrto} K_1^M(k)\simeq k^\times.\]
Explicitly, let $\pi\subset K$ be the maximal ideal. Then
\begin{equation}\label{ts2}
\{f,g\}=(-1)^{\nu(f)\nu(g)}\frac{f^{\nu(g)}}{g^{\nu(f)}} \mod \pi
\end{equation}

Now, let $\bbK$ be a two-dimensional local field, whose residue field is denoted by $K$, whose residue field is $k$. Then we define the following map as
\[\nu_\bbK \: : \:\bbK^\times\otimes_\bbZ\bbK^\times\lrto K_2^M(\bbK) \stackrel{\partial_2}{\lrto} K_1^M(K) \stackrel{\partial_1}{\lrto} K_0^M(k)\simeq\bbZ \mbox{,}\]
and define the two-dimensional tame symbol as
\[\{\cdot,\cdot,\cdot\} \: : \: \bbK^\times\otimes_\bbZ\bbK^\times\otimes_\bbZ\bbK^\times\lrto K_3^M(\bbK) \stackrel{\partial_3}{\lrto} K_2^M(K)
\stackrel{\partial_2}{\lrto} K_1^M(k)\simeq k^\times.\] We have the
following explicit formulas for $\nu_\bbK$ and
$\{\cdot,\cdot,\cdot\}$ (see \cite{O2}.) Let $\nu_1:\bbK\to\bbZ$,
and $\nu_2:K\to\bbZ$ be discrete valuations. Let $\pi_\bbK$ be the
maximal ideal of $\bbK$, $\pi_K$ be the maximal ideal of $K$. For an
element $f\in\calO_\bbK$, let $\bar{f}$ denote its residue class in
$K$. Then
\begin{equation}\label{nuk}
\nu_\bbK(f,g)=\nu_2(\overline{\frac{f^{\nu_1(g)}}{g^{\nu_1(f)}}})
\end{equation}
\begin{equation}\label{ts3}
\{f,g,h\}=\on{sgn}(f,g,h)f^{\nu_\bbK(g,h)}g^{\nu_\bbK(h,f)}h^{\nu_\bbK(f,g)} \mod \pi_\bbK \mod \pi_K
\end{equation}
where
\begin{equation} \hspace{-0.2cm} \label{sgn}
\begin{array}{c}\on{sgn}(f,g,h)=(-1)^A,\\
A=\nu_\bbK(f,g)\nu_\bbK(f,h)+\nu_\bbK(g,h)\nu_\bbK(g,f)+\nu_\bbK(h,f)\nu_\bbK(h,g)+\nu_\bbK(f,g)\nu_\bbK(g,h)\nu_\bbK(h,f).
\end{array}
\end{equation}
\begin{rmk}Originally one used another explicit formula for the sign of
the two-dimensional tame symbol.
 This other formula was introduced in~\cite{Pa0}.
\end{rmk}

It is easy to see that tame symbols $\{\cdot, \cdot \}$, $\{ \cdot,
\cdot, \cdot \}$ and the map $\nu_\bbK$ are anti-symmetric.

\subsection{The one-dimensional story} \label{one-story}
Let $\calV$ be a $1$-Tate vector space over $k$. The group of
automorphisms of $\calV$ in this category is denoted by
$\GL(\calV)$.
\begin{prop}\label{DetV}
There is a homomorphism $\Det_\calV:\GL(\calV)\to\calP ic^\bbZ$,
which is canonical up to a unique isomorphism in
$H^1(B\GL(\calV),\calP ic^\bbZ)$.
\end{prop}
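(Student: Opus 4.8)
The plan is to obtain $\Det_\calV$ by restricting the functor $\Det$ of Proposition~\ref{det gerbe} to the automorphisms of $\calV$ and then transporting the resulting self-equivalences of the torsor $\Det(\calV)$ back into $\calP ic^\bbZ$ via the chosen equivalence $\calZ^{-1}$ of~\eqref{zzz}.

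First I would recall that by Proposition~\ref{det gerbe} the object $\Det(\calV)$ is a $\calP ic^\bbZ$-torsor and that $\Det$ is a functor on $(\Tate_1,\on{isom})$. Every $g\in\GL(\calV)$ is an automorphism of $\calV$ in $\Tate_1$, so $\Det(g)$ is a self-equivalence of $\Det(\calV)$, i.e. an object of the monoidal groupoid $\Hom_{\calP ic^\bbZ}(\Det(\calV),\Det(\calV))$ (with composition as the monoidal structure). Functoriality of $\Det$ furnishes, for every $g,g'\in\GL(\calV)$, a canonical isomorphism $\Det(gg')\simeq\Det(g)\circ\Det(g')$, and the coherence of $\Det$ makes these isomorphisms compatible with the associativity constraint on triple products $\Det(gg'g'')$. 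Thus $g\mapsto\Det(g)$ is a monoidal functor from $\GL(\calV)$, regarded as a discrete monoidal category under group multiplication, to the monoidal groupoid $\Hom_{\calP ic^\bbZ}(\Det(\calV),\Det(\calV))$.

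Next I would post-compose with $\calZ^{-1}:\Hom_{\calP ic^\bbZ}(\Det(\calV),\Det(\calV))\to\calP ic^\bbZ$ of~\eqref{zzz}. Since $\calZ$ of~\eqref{zz} is a $1$-isomorphism of monoidal groupoids sending the $+$ of $\calP ic^\bbZ$ to composition, $\calZ^{-1}$ carries composition to $+$. Setting $\Det_\calV(g):=\calZ^{-1}(\Det(g))$, the composite $\calZ^{-1}\circ\Det$ is therefore a monoidal functor $\GL(\calV)\to\calP ic^\bbZ$, equipped with isomorphisms
\[\Det_\calV(gg')\simeq\calZ^{-1}(\Det(g)\circ\Det(g'))\simeq\Det_\calV(g)+\Det_\calV(g')\]
compatible with associativity; this is exactly the datum of an object of $H^1(B\GL(\calV),\calP ic^\bbZ)$.

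For the canonicity, I would note that the construction uses only the torsor $\Det(\calV)$, determined up to canonical isomorphism by Proposition~\ref{det gerbe}, and the chosen quasi-inverse $\calZ^{-1}$ together with the $2$-isomorphism $\calZ^{-1}\circ\calZ\simeq\on{id}$, which was fixed once and for all and is unique up to a unique isomorphism. Hence any two realizations of the construction are related by a unique isomorphism in $H^1(B\GL(\calV),\calP ic^\bbZ)$. The only genuinely nonformal point, and the one I would check with care, is that $g\mapsto\Det(g)$ is monoidal rather than merely multiplicative on isomorphism classes, namely that the coherence isomorphisms $\Det(gg')\simeq\Det(g)\circ\Det(g')$ satisfy the associativity compatibility required for membership in $H^1(B\GL(\calV),\calP ic^\bbZ)$; this follows from the functoriality and coherence established in Proposition~\ref{det gerbe} together with the monoidality of $\calZ^{-1}$.
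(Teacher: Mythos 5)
Your proposal is correct and follows essentially the same route as the paper: the paper's proof is precisely to use Proposition~\ref{det gerbe} to get a homomorphism $\GL(\calV)\to\Hom_{\calP ic^\bbZ}(\calD et(\calV),\calD et(\calV))$ and then identify the target with $\calP ic^\bbZ$ via the fixed quasi-inverse $\calZ^{-1}$ of~\eqref{zzz}. You merely spell out the coherence and canonicity checks that the paper leaves implicit.
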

\begin{proof}According to proposition~\ref{det gerbe}, we have a
homomorphism
\[\GL(\calV)\to\Hom_{\calP ic^\bbZ}(\calD et(\calV),\calD et(\calV))\simeq\calP ic^\bbZ\]
via $\calZ^{-1}$, where $\calZ:\calP ic^\bbZ\to\Hom_{\calP
ic^\bbZ}(\calD et(\calV),\calD et(\calV))$ is a natural homomorphism
from section~\ref{Ptorsors}.
\end{proof}

Choose $\calL\subset \calV$ a lattice. It follows from the proof of
proposition~\ref{det gerbe} that in concrete terms, one has to
assign to $\Det_\calV(g)$ the graded line
\begin{equation} \label{grline}
\det(\calL \mid g\calL):=\det\left(\frac{g\calL}{\calL\cap g\calL}\right)\otimes \,
\det\left(\frac{\calL}{\calL\cap g\calL}\right)^{-1} \mbox{,}
\end{equation}
where $g \in \GL(\calV)$.
Then, it is well-known that there is a canonical isomorphism
\[\det(\calL \mid gg'\calL)\simeq\det(\calL|g\calL)\otimes\det(g\calL \mid gg'\calL)\simeq
\det(\calL\mid g\calL)\otimes\det(\calL \mid g'\calL) \mbox{,}\]
which is compatible with the associativity constraints in the
category $\calP ic^\bbZ$ (see, for example, \cite[\S 1]{FZ}). For
different choice of $\calL$, the resulting objects in
$H^1(BGL(\calV),\calP ic^\bbZ)$ are isomorphic.

We also have the following lemma, which easily follows from the
construction of homomorphism $\Det_\calV$ and the discussion in \S
\ref{detth} (in particular the diagram \eqref{diag2}).
\begin{lem}\label{fil}
If $0\to\calV'\to\calV\to\calV''\to 0$ is a short exact sequence of $1$-Tate vector spaces
(recall that $\Tate_1$ is an exact category). Let $P$ be the
subgroup of $\GL(\calV)$ that preserves this sequence, then there is
a canonical $1$-isomorphism $\calD et_{\calV'}+\calD
et_{\calV''}\simeq\Det_\calV$  in $H^1(BP,\calP ic^\bbZ)$.
\end{lem}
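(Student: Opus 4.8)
The plan is to construct the canonical $1$-isomorphism $\calD et_{\calV'} + \calD et_{\calV''} \simeq \Det_\calV$ in $H^1(BP,\calP ic^\bbZ)$ by exhibiting it fibrewise on each graded line and then checking compatibility with the homomorphism constraints. First I would choose a lattice $\calL' \subset \calV'$ and a lattice $\calL'' \subset \calV''$; since the sequence $0 \to \calV' \to \calV \to \calV'' \to 0$ is admissible, I can lift $\calL''$ and form a lattice $\calL \subset \calV$ with $\calL \cap \calV' = \calL'$ and $\epsilon(\calL) = \calL''$, where $\epsilon \: : \: \calV \to \calV''$ is the quotient map. For $g \in P$, the element $g$ preserves the sequence, so $g$ acts on both $\calV'$ and $\calV''$ compatibly, and one has short exact sequences relating $g\calL$, $g\calL'$ and $g\calL''$. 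Using the explicit formula \eqref{grline} for the graded lines $\det(\calL' \mid g\calL')$, $\det(\calL'' \mid g\calL'')$ and $\det(\calL \mid g\calL)$, together with the additivity property \eqref{tensor} of $\det$ in $\Tate_0$, I would produce for each $g \in P$ a canonical isomorphism
\[
\det(\calL' \mid g\calL') \otimes \det(\calL'' \mid g\calL'') \simeq \det(\calL \mid g\calL)
\]
in $\calP ic^\bbZ$.

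The second step is to verify that this fibrewise isomorphism is functorial in $g$ and compatible with the multiplicativity constraints $\det(\calL \mid gg'\calL) \simeq \det(\calL \mid g\calL) \otimes \det(\calL \mid g'\calL)$ discussed after \eqref{grline}. This is precisely the content of asking that the collection of isomorphisms defines a morphism in $H^1(BP,\calP ic^\bbZ)$. Here I would invoke the commutative diagram \eqref{diag2} from \S\ref{detth}: applied to the $3 \times 3$ diagram built from the filtration by $\calV'$ and the successive lattices $\calL$, $g\calL$, $gg'\calL$, it guarantees that the associativity and commutativity constraints in $\calP ic^\bbZ$ match up the two ways of decomposing $\det(\calL \mid gg'\calL)$, once through the $g, g'$ factorization and once through the $\calV', \calV''$ decomposition. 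Equivalently, using $\calZ^{-1}$ as in the proof of Proposition~\ref{DetV}, the homomorphisms $\Det_{\calV'}, \Det_{\calV''}, \Det_\calV$ all arise from the graded-determinantal torsors $\calD et(\calV'), \calD et(\calV''), \calD et(\calV)$, and the $1$-isomorphism \eqref{aaa} of Proposition~\ref{det gerbe} already provides $\calD et(\calV') + \calD et(\calV'') \simeq \calD et(\calV)$ as $\calP ic^\bbZ$-torsors; restricting the $\GL$-actions to $P$ and transporting through $\calZ$ yields the desired isomorphism in $H^1(BP,\calP ic^\bbZ)$.

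The main obstacle, and the step deserving the most care, is the compatibility of the multiplicativity data rather than the fibrewise construction itself: one must confirm that the isomorphism \eqref{aaa} is equivariant for the $P$-action in the sense that the induced automorphisms of $\calD et(\calV') + \calD et(\calV'')$ and of $\calD et(\calV)$ agree under $\calZ$, and that this agreement is compatible with composition in $P$. This reduces to the commutativity of diagram \eqref{diag1} together with the $2$-isomorphism attached to \eqref{diag2}, which control exactly how the determinant of a lattice behaves under the filtration and under iterated quotients. Since the proposition only asserts the existence of a canonical $1$-isomorphism in $H^1(BP,\calP ic^\bbZ)$ (not a finer $2$-categorical statement), the coherence \eqref{diag1} and the existence of the $2$-isomorphism for \eqref{diag2} from Proposition~\ref{det gerbe} suffice, and the remaining verifications are the routine diagram chases already signalled in \S\ref{detth}; I would simply note that independence of the auxiliary choice of $\calL$ follows as in Proposition~\ref{DetV}, since different compatible choices give isomorphic objects in $H^1(BP,\calP ic^\bbZ)$.
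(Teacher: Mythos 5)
Your proposal is correct and is essentially the paper's own proof: the paper disposes of this lemma in one sentence, saying it follows from the construction of $\Det_\calV$ in Proposition~\ref{DetV} (via the torsor $\calD et(\calV)$ and $\calZ^{-1}$) together with the discussion in \S\ref{detth}, i.e.\ the $1$-isomorphism \eqref{aaa} of Proposition~\ref{det gerbe} and the compatibilities \eqref{diag1}--\eqref{diag2}, which is exactly what your second and third paragraphs spell out. The only caution concerns your lattice-level warm-up: the isomorphism $\det(\calL'\mid g\calL')\otimes\det(\calL''\mid g\calL'')\simeq\det(\calL\mid g\calL)$ cannot be obtained by applying \eqref{tensor} directly to the quotients appearing in \eqref{grline}, because the induced sequence $0\to g\calL'/(\calL'\cap g\calL')\to g\calL/(\calL\cap g\calL)\to g\calL''/(\calL''\cap g\calL'')\to 0$ need not be exact (take $\calV=k^{2}$, $\calV'=ke_{1}$, $\calL=k(e_{1}+e_{2})$, and $g\in P$ with $ge_{1}=e_{1}$, $ge_{2}=e_{1}+e_{2}$: then $\calL\cap g\calL=0$ while $\calL''\cap g\calL''=\calV''$, so the middle term is one-dimensional but both ends vanish), so one must instead compare $\calL$ and $g\calL$ through a common sublattice $N\subseteq\calL\cap g\calL$ and the exact sequences $0\to\calL'/(N\cap\calV')\to\calL/N\to\calL''/\epsilon(N)\to 0$ --- which is precisely the bookkeeping that the graded-determinantal theories behind \eqref{aaa} carry out for you.
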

\begin{rmk}The 1-homomorphism $F_{\calP ic}\circ\calD et_\calV:
\GL(\calV)\to\calP ic$ is essentially constructed in \cite{ACK}.
However, the above lemma does not hold for this 1-homomorphism. This
is the complication of the sign issues in \cite{ACK}.
\end{rmk}

Now let $k'/k$ be a finite extension and $K=k'((t))$ be a local
field with residue field $k'$. Then $K$ has a natural structure as a
$1$-Tate vector space over $k$. Let $H=K^\times$.
 The multiplication gives a natural embedding $H\subset\GL(K)$. The
 following proposition is from~\cite{BBE}.
\begin{prop}\label{tame symbol} If $f,g\in H$, then
\[\on{Comm}(\calD et_{K})(f,g)=\on{Nm}_{k'/k}\{f,g\}^{-1} \]
that is inverse to the tame symbol of $f$ and $g$.
\end{prop}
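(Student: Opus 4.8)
The plan is to reduce the statement to the classical determinant-line computation of \cite{ACK} by separating the contribution of the grading on $\calP ic^\bbZ$ from that of the underlying ungraded lines. Throughout I fix the lattice $\calL=\calO_K=k'[[t]]\subset K$ and use the concrete model \eqref{grline} for $\calD et_K$, so that for $f\in K^\times$ the graded line $\calD et_K(f)$ has underlying $k$-line $\det(\calL\mid f\calL)$ and degree $d(f)=-\nu(f)[k':k]$ (multiplication by $f=t^{\nu(f)}u$ carries $\calL$ to $t^{\nu(f)}\calL$, and $\dim_k(\calL/t^n\calL)=n[k':k]$). Both $\on{Comm}(\calD et_K)$ and $(f,g)\mapsto\on{Nm}_{k'/k}\{f,g\}^{-1}$ are anti-symmetric and bimultiplicative (by lemma-definition~\ref{comm} for the former, and because the tame symbol factors through $K^M_2$ while the norm is multiplicative for the latter), so it suffices to compare them on a generating set of $K^\times$, namely $t$ together with the units $u\in\calO_K^\times$.

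First I would isolate the sign. Recall the monoidal equivalence $\calP ic^\bbZ\simeq\calP ic\times\bbZ$, and that under it the commutativity constraint of $\calP ic^\bbZ$ differs from the strictly commutative one on $\calP ic\times\bbZ$ exactly by the factor $(-1)^{n_1n_2}$ on objects of degrees $n_1,n_2$. Since $\on{Comm}$ is assembled only from the structural isomorphisms of the homomorphism together with the commutativity constraint, and since $\pi_1(\bbZ)=0$, this yields the factorization
\[\on{Comm}(\calD et_K)(f,g)=(-1)^{d(f)d(g)}\cdot\on{Comm}(F_{\calP ic}\circ\calD et_K)(f,g),\]
where $F_{\calP ic}\circ\calD et_K:K^\times\to\calP ic$ is the ungraded determinant homomorphism of \cite{ACK}. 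Because $d(f)d(g)=\nu(f)\nu(g)[k':k]^2$ and $m^2\equiv m\pmod 2$, the sign equals $(-1)^{\nu(f)\nu(g)[k':k]}=\on{Nm}_{k'/k}\big((-1)^{\nu(f)\nu(g)}\big)$, which matches the norm of the sign appearing in \eqref{ts2}.

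It then remains to identify the ungraded commutator $\on{Comm}(F_{\calP ic}\circ\calD et_K)(f,g)$ with $\on{Nm}_{k'/k}\big(\tfrac{f^{\nu(g)}}{g^{\nu(f)}}\bmod\pi\big)^{-1}$; combining this with the sign above, and using that $\on{Nm}_{k'/k}((-1)^{\nu(f)\nu(g)})$ is $2$-torsion, produces exactly $\on{Nm}_{k'/k}\{f,g\}^{-1}$. On generators this is a direct finite-dimensional computation in the model $\det(\calL\mid f\calL)$: for two units the lines $\calD et_K(u_i)$ are canonically trivial and the commutator is $1$; for the pair $(t,u)$ the commutator is the determinant over $k$ of multiplication by $\bar u$ on $\calL/t\calL\cong k'$, i.e. $\on{Nm}_{k'/k}(\bar u)^{-1}=\on{Nm}_{k'/k}\{t,u\}^{-1}$; and for $(t,t)$ the ungraded part is trivial while the sign gives $(-1)^{[k':k]}=\on{Nm}_{k'/k}\{t,t\}^{-1}$.

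I expect the main obstacle to be precisely this last identification in the general (non-split) situation: the careful lattice bookkeeping showing that the swap automorphism of $\det(\calL\mid fg\calL)$ computes $f^{\nu(g)}/g^{\nu(f)}\bmod\pi$, together with the appearance of $\on{Nm}_{k'/k}$ coming from viewing $K=k'((t))$ as a Tate vector space over $k$ rather than over $k'$ (restriction of scalars). This is exactly the content of the determinant-theoretic computation of the tame symbol in \cite{ACK} (see also \cite[\S1]{FZ} and \cite{BBE}); the role of $\calP ic^\bbZ$ here, as emphasized in the introduction, is only to supply the correct sign $(-1)^{\nu(f)\nu(g)}$ in a functorial way, whereas in the ungraded setting of \cite{ACK} this required delicate ad hoc sign conventions.
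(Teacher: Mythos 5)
The paper does not actually prove this proposition: it is imported directly from \cite{BBE} (the text introducing it says exactly that), and the only internal commentary is the remark immediately following it, which records that the ungraded homomorphism $F_{\calP ic}\circ\calD et_{K}$ has commutator $(-1)^{\on{ord}(f)\on{ord}(g)}\on{Nm}_{k'/k}\{f,g\}^{-1}$. Your proposal is therefore a genuinely different treatment --- an actual proof outline rather than a citation --- and its architecture is sound: the reduction to the generators $t$ and $\calO_K^\times$ via anti-symmetry and bimultiplicativity (lemma-definition~\ref{comm}) is legitimate, and your sign-splitting identity $\on{Comm}(\calD et_K)(f,g)=(-1)^{d(f)d(g)}\on{Comm}(F_{\calP ic}\circ\calD et_K)(f,g)$ (with $d$ the grading in \eqref{grline}) is exactly the paper's remark read backwards; it is correct because $F_{\calP ic}$ is monoidal but not symmetric monoidal, so the two commutators are assembled from the images of identical structural isomorphisms and differ only in the single commutativity-constraint arrow, i.e.\ by the Koszul sign $(-1)^{d(f)d(g)}=(-1)^{\nu(f)\nu(g)[k':k]}$. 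Three caveats are worth recording. First, in the unit--unit case, canonical triviality of the lines $\calD et_K(u_i)$ does not by itself make the commutator trivial; you need the trivializations to be compatible with the multiplicative structure, which in the paper's language is lemma~\ref{prop3.4} (triviality of $\calD et_K$ on the lattice-preserving subgroup $\calO_K^\times$) combined with corollary~\ref{2.8}. Second, your $(t,u)$ step has an internal slip: the determinant of multiplication by $\bar u$ on $\calL/t\calL\cong k'$ is $\on{Nm}_{k'/k}(\bar u)$, not $\on{Nm}_{k'/k}(\bar u)^{-1}$; since $\{t,u\}=\bar u^{-1}\bmod\pi$ by \eqref{ts2}, the chain should read $\on{Comm}(t,u)=\on{Nm}_{k'/k}(\bar u)=\on{Nm}_{k'/k}\{t,u\}^{-1}$, so your endpoints agree while the middle term is inverted. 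Third, the one genuinely nontrivial computation --- that the swap automorphism of $\det(\calL\mid tu\calL)$ built from the structural isomorphisms really is multiplication by this norm --- is asserted and deferred to \cite{ACK}; that is the entire analytic content of the proposition, so your proof is, like the paper's, ultimately a reduction to the literature, but a more informative one: it isolates what $\calP ic^\bbZ$ contributes (the functorial sign) versus what the ungraded theory of \cite{ACK} contributes (the norm of the unit part), which is precisely the point the paper makes only in passing.
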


\begin{rmk}
Since the natural functor $F_{\calP ic}$ is monoidal, the
restriction to $H$ of the functor  $ F_{\calP ic} \circ \calD
et_{K}$ determines a homomorphism $H \to \calP ic$. The commutator
pairing $\on{Comm}(f,g)$ constructed by this homomorphism is
$$(-1)^{\on{ord}(f)\on{ord}(g)}\on{Nm}_{k'/k}\{f,g\}^{-1} \mbox{.}$$
\end{rmk}

By definition~\ref{latcol}, a lattice $\calL$ of $\calV$ is a
linearly compact open $k$-subspace of $\calV$ such that $\calV
/\calL$ is a discrete $k$-space. A colattice $\calL^c$ is a
$k$-subspace of $\calV$ such that for any lattice $\calL$, both
$\calL^c\cap\calL$ and $\calV/(\calL^c+\calL)$ are finite
dimensional.

\begin{lem}\label{prop3.4}
Let $P\subset \GL(\calV)$ be a subgroup of $\GL(\calV)$ that preserves a lattice (or a colattice)
in $\calV$, then the homomorphism $\calD et_\calV$ is trivial on
$P$.
\end{lem}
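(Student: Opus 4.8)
The plan is to prove the statement by choosing, in the concrete realization of $\Det_\calV$ given by \eqref{grline}, a base lattice adapted to $P$, treating the lattice case directly and reducing the colattice case to it via Lemma~\ref{fil}.

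First I would do the lattice case. Suppose $P$ preserves a lattice $\calL_0$; since $P$ is a subgroup, for $g\in P$ both $g$ and $g^{-1}$ map $\calL_0$ into itself, so in fact $g\calL_0=\calL_0$. I would realize $\Det_\calV$ by taking $\calL=\calL_0$ in \eqref{grline}, so that $\Det_\calV(g)=\det(\calL_0\mid g\calL_0)$; by the discussion following \eqref{grline} a different choice of lattice only changes the representative by an isomorphism in $H^1(B\GL(\calV),\calP ic^\bbZ)$, so this is permissible. For $g\in P$ we have $\calL_0\cap g\calL_0=\calL_0$, both subquotients $g\calL_0/(\calL_0\cap g\calL_0)$ and $\calL_0/(\calL_0\cap g\calL_0)$ vanish, and hence $\Det_\calV(g)=\det(0)\otimes\det(0)^{-1}$ is canonically the unit object $\ell_0$. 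The multiplicativity isomorphisms $\Det_\calV(gg')\simeq\Det_\calV(g)+\Det_\calV(g')$ then reduce to the canonical isomorphisms of the unit object with itself, so the restriction of $\Det_\calV$ to $P$ is isomorphic to the trivial homomorphism in $H^1(BP,\calP ic^\bbZ)$.

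Two special cases of what I have just shown are worth isolating: if $W\in\on{Pro}(\Tate_0)$ is linearly compact then the whole space $W$ is a lattice of itself fixed by \emph{every} automorphism, and if $W\in\on{Ind}(\Tate_0)$ is discrete then the zero subspace is a lattice fixed by every automorphism; in either case the lattice case shows $\Det_W$ is trivial on all of $\GL(W)$. For the colattice case $P$ need not preserve any lattice, so I would instead invoke Lemma~\ref{fil}. If $\calL_0^c$ is a colattice preserved by $P$ (again $g\calL_0^c=\calL_0^c$ for $g\in P$), then $P$ preserves the admissible sequence $0\to\calL_0^c\to\calV\to\calV/\calL_0^c\to 0$, in which $\calL_0^c\in\on{Ind}(\Tate_0)$ is discrete and $\calV/\calL_0^c\in\on{Pro}(\Tate_0)$ is linearly compact. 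Lemma~\ref{fil} supplies a canonical isomorphism $\Det_{\calL_0^c}+\Det_{\calV/\calL_0^c}\simeq\Det_\calV$ in $H^1(BP,\calP ic^\bbZ)$, and the images of $P$ in $\GL(\calL_0^c)$ and $\GL(\calV/\calL_0^c)$ act on a purely discrete and a purely linearly compact space respectively, so by the two special cases each summand is trivial; a sum of trivial homomorphisms is trivial, whence $\Det_\calV$ is trivial on $P$.

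The main obstacle is the colattice case rather than the lattice case: the content lies in recognizing, via Lemma~\ref{fil}, that it collapses to the already-settled triviality of $\Det$ on automorphisms of pure pro- and ind-objects. The point requiring care is that $P$ genuinely acts by automorphisms on both the discrete sub-object $\calL_0^c$ and the linearly compact quotient $\calV/\calL_0^c$, so that the canonical isomorphism of Lemma~\ref{fil} is available over all of $BP$, and that \emph{triviality} is checked at the level of the homomorphism — both the assignment $g\mapsto\Det_\calV(g)$ and its multiplicativity constraints must match those of the constant functor at $\ell_0$ — and not merely object by object. Once the adapted lattice ($\calL_0$, or the whole space, or $0$) is chosen, the vanishing of the relevant subquotients makes each of these verifications immediate.
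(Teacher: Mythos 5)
Your proposal is correct, and where the real content lies --- the colattice case --- it is exactly the paper's argument: split $0\to\calL_0^c\to\calV\to\calV/\calL_0^c\to 0$ by Lemma~\ref{fil} and use triviality of $\calD et$ on automorphisms of purely discrete and purely linearly compact spaces; the paper compresses all of this into the single sentence that for a colattice one should ``use the analogous reasonings,'' so your write-up is more explicit than the published one. The only structural divergence is the lattice case: the paper routes it through Lemma~\ref{fil} as well, splitting $\calV$ into $\calL$ and $\calV/\calL$ and then citing the proof of Proposition~\ref{DetV} for the triviality of $\calD et_{\calL}$ and $\calD et_{\calV/\calL}$ on $P$, whereas you avoid the decomposition entirely by taking the $P$-invariant lattice $\calL_0$ as the base lattice in the realization \eqref{grline}, so that on $P$ the functor $\calD et_\calV$ becomes the constant functor $\ell_0$ with unit multiplicativity constraints. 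Your lattice argument is thus more elementary, while the paper's uniform use of Lemma~\ref{fil} buys the formal parallelism between the two cases that licenses its one-line treatment of the colattice. Both proofs rest on the same two ingredients --- Lemma~\ref{fil} and the concrete lattice realization of $\calD et_\calV$ --- and your insistence on checking triviality at the level of monoidal functors (constraints included), not merely object by object, is the right standard of care and is implicit in the paper's appeal to Proposition~\ref{DetV}.
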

\begin{proof}
Let $\calL \subset \calV$ be a lattice such that the group $P$
preseves it. We consider an exact sequence of $1$-Tate vector spaces
$$
0 \lrto \calL \lrto \calV  \lrto \calV / \calL \lrto 0 \mbox{.}
$$
Then the group $P$ preserves this sequence. Therefore by
lemma~\ref{fil}, it is enough to prove that the homomorphisms $\calD
et_\calL$ and $\calD et_{\calL / \calV}$ are trivial on $P$. But
this is obvious from the proof of proposition~\ref{DetV}.

For a colattice $\calL^c \subset \calV$ we have to use the analogous
reasonings.
\end{proof}

\subsection{The two-dimensional story}  \label{2-story}

If $\bbV\in\Tate_2$, then we denote by $\GL(\bbV)$  the group of
automorphisms of $\bbV$ in this category.

There should be a determinantal functor from $(\Tate_2, \on{isom})$
to $B^2\calP ic^\bbZ$, which assigns to every such $ \bbV$ the
graded gerbel theory in the sense of \cite{AK}, satisfying
properties which generalize properties listed in
proposition~\ref{det gerbe} (and further compatibility conditions).
We do not make it precise. But we define the corresponding central
extension of $\GL(\bbV)$ as follows. Pick  a lattice $\bbL$ of
$\bbV$. Then one associates with $g$ the $\calP ic^\bbZ$-torsor
\begin{equation} \label{pictorsor}
\Det_\bbV(g)=\Det(\bbL \mid g\bbL):=\Det \left(\frac{g\bbL}{\bbL\cap g\bbL}
\right) -\Det \left(\frac{\bbL}{\bbL\cap g\bbL} \right) \mbox{.}
\end{equation}
This definition is correct because both $k$-spaces
$\frac{g\bbL}{\bbL\cap g\bbL}$ and $\frac{\bbL}{\bbL\cap g\bbL}$
belong to objects of  category $\Tate_1$. We define the
$1$-isomorphism as
\begin{equation} \label{deiso}
\Det(\bbL \mid gg'\bbL)\simeq\Det(\bbL \mid g\bbL)+\Det(g\bbL \mid
gg'\bbL)\simeq \Det(\bbL \mid g\bbL)+\Det(\bbL \mid g'\bbL) \mbox{.}
\end{equation}
One uses proposition~\ref{det gerbe} to check that this defines a
central extension of $\GL(\bbV)$ by $\calP ic^\bbZ$. This central
extension depends on the chosen lattice $\bbL$ of $\bbV$. If we
change the lattice, then the central extension constructed by a new
lattice will be isomorphic to the previous one.
\begin{rmk}If
one replaces $\calP ic^\bbZ$ by $\calP ic$, such a central extension
was constructed in~\cite{O2, FZ}. Besides, in~\cite{O2} the
two-dimensional tame symbol up to sign was obtained as an
application of this construction, and the reciprocity laws on
algebraic surfaces were proved up to sign.
\end{rmk}

As generalization of lemma ~\ref{fil} and lemma~\ref{prop3.4} it is
not difficult to prove the following lemmas.
\begin{lem}\label{fil2}
If $0\to\bbV'\to\bbV\to\bbV''\to 0$ is a short exact sequence of $2$-Tate vector spaces
(recall that $\Tate_2$ is an exact category). Let $P$ be the
subgroup of $\GL(\bbV)$ that preserves this sequence, then there is
a canonical $1$-isomorphism $\calD et_{\bbV'}+\calD
et_{\bbV''}\simeq\Det_\bbV$  in $H^2(BP,\calP ic^\bbZ)$.
\end{lem}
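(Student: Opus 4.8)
The plan is to mimic the proof of Lemma~\ref{fil}, replacing the one-dimensional determinant homomorphism by the two-dimensional central extension $\Det_\bbV$ of \eqref{pictorsor}, and feeding in the additivity of the $1$-Tate determinant functor $\Det\colon(\Tate_1,\on{isom})\to B\calP ic^\bbZ$ together with its coherences from Proposition~\ref{det gerbe}. First I would choose a lattice $\bbL$ of $\bbV$ compatible with the given sequence, i.e.\ such that $\bbL':=\bbL\cap\bbV'$ is a lattice of $\bbV'$, the image $\bbL''$ of $\bbL$ in $\bbV''$ is a lattice of $\bbV''$, and $0\to\bbL'\to\bbL\to\bbL''\to0$ is admissible exact. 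Existence of such a compatible lattice follows from the exact category structure of $\Tate_2$ exactly as in the $1$-Tate case, since $\bbV'\to\bbV$ is an admissible monomorphism and the relevant sub/quotients stay in $\on{Pro}(\Tate_1)$ and $\on{Ind}(\Tate_1)$. As every $g\in P$ preserves the sequence, it restricts to $g'\in\GL(\bbV')$ and descends to $g''\in\GL(\bbV'')$, and I would compute $\Det_\bbV(g)$, $\Det_{\bbV'}(g')$, $\Det_{\bbV''}(g'')$ using $\bbL$, $\bbL'$, $\bbL''$ respectively.

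Next, for each such $g$ I would construct the underlying $1$-isomorphism of $\calP ic^\bbZ$-torsors $\Det_\bbV(g)\simeq\Det_{\bbV'}(g')+\Det_{\bbV''}(g'')$. The key point is that $\bbL\cap g\bbL$ is again filtration-compatible: one has $(\bbL\cap g\bbL)\cap\bbV'=\bbL'\cap g'\bbL'$, and writing $M''$ for the image of $\bbL\cap g\bbL$ in $\bbV''$ there are admissible exact sequences of $1$-Tate spaces $0\to\bbL'/(\bbL'\cap g'\bbL')\to\bbL/(\bbL\cap g\bbL)\to\bbL''/M''\to0$ together with its analogue for $g\bbL$. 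Applying the additivity $1$-isomorphism \eqref{aaa} to these two sequences and subtracting gives $\Det_\bbV(g)\simeq\Det_{\bbV'}(g')+(\Det(g''\bbL''/M'')-\Det(\bbL''/M''))$. Here $M''$ is only a finite-codimension subspace of $\bbL''\cap g''\bbL''$; but the finite-dimensional discrepancy $(\bbL''\cap g''\bbL'')/M''$ enters both remaining terms symmetrically, so one more application of \eqref{aaa} cancels these contributions and identifies the bracket canonically with $\Det_{\bbV''}(g'')$.

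Then I would upgrade this family of torsor $1$-isomorphisms to an isomorphism in $H^2(BP,\calP ic^\bbZ)$, i.e.\ make it compatible with the central-extension data. Compatibility with the multiplicativity $1$-isomorphisms \eqref{deiso} for a pair $g,g'$ reduces to the naturality of the additivity isomorphism, which is precisely the commutativity of diagram \eqref{diag1} for $\Det$; compatibility with the associativity $2$-isomorphisms \eqref{assoc} is supplied by the $2$-isomorphism attached to diagram \eqref{diag2} in Proposition~\ref{det gerbe}, transported through the decomposition above. Since $\Det_{\bbV'}+\Det_{\bbV''}$ and $\Det_\bbV$ are assembled from the same $1$-Tate determinant, all these matches are canonical.

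The hard part will be the bookkeeping of the $2$-isomorphisms rather than any single new idea. Concretely, one must check that the $2$-isomorphisms produced from \eqref{diag2} satisfy the $15$-vertex coherence described after the cube \eqref{cube}, so that they genuinely define a morphism of central extensions and not merely a compatible family of $1$-isomorphisms; this reduces to the further compatibility conditions on the $2$-isomorphisms of \eqref{diag2} noted after Proposition~\ref{det gerbe}, combined with the pentagon \eqref{pentagon2} and the functoriality axioms \eqref{axas1}--\eqref{axas3} in $B\calP ic^\bbZ$. A secondary nuisance is independence of the auxiliary compatible lattice $\bbL$, which follows as in the one-dimensional case, since two such choices differ by commensurable modifications whose determinants are canonically trivialized.
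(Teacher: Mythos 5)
Your proposal follows the only route the paper itself indicates: the paper in fact gives no proof of this lemma, saying merely that "as generalization of lemma~\ref{fil} and lemma~\ref{prop3.4} it is not difficult to prove" it, and your argument is a faithful implementation of that generalization (a compatible lattice $\bbL$, restriction/descent of each $g\in P$, the additivity $1$-isomorphism \eqref{aaa} of proposition~\ref{det gerbe} applied to the two induced sequences of $1$-Tate quotients, and then the coherences \eqref{diag1}, \eqref{diag2} to promote the family of torsor $1$-isomorphisms to an isomorphism of central extensions compatible with \eqref{deiso} and \eqref{assoc}).

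One claim in your second paragraph is, however, false as stated, although your argument survives its correction. You assert that $M''$, the image of $\bbL\cap g\bbL$ in $\bbV''$, has \emph{finite} codimension in $\bbL''\cap g''\bbL''$. That is a $1$-Tate phenomenon which fails for $2$-Tate spaces: take $\bbV=\bbV'\oplus\bbV''$ with $\bbV'=\bbV''=k((t))((s))$, the lattice $\bbL=\calO\oplus\calO$ with $\calO=k((t))[[s]]$, and $g=\left(\begin{smallmatrix}1&s^{-1}\\ 0&1\end{smallmatrix}\right)\in P$. Then $g'=\id$, $g''=\id$, so $\bbL''\cap g''\bbL''=\calO$, while $\bbL\cap g\bbL=\calO\oplus s\calO$, whence $M''=s\calO$ and the discrepancy $(\bbL''\cap g''\bbL'')/M''\simeq\calO/s\calO\simeq k((t))$ is an infinite-dimensional $1$-Tate space. (In general, by the snake lemma, this discrepancy is the kernel of $\bbV'/(\bbL'+g'\bbL')\to\bbV/(\bbL+g\bbL)$, which has no reason to be finite-dimensional.) Fortunately your cancellation step never actually uses finite-dimensionality: since $M''\subseteq\bbL''\cap g''\bbL''\subseteq\bbL''$ (resp.\ $\subseteq g''\bbL''$) are nested lattices in $\bbV''$, the discrepancy and all quotients involved are objects of $\Tate_1$ and the two short exact sequences are admissible, so \eqref{aaa} applies verbatim and the term $\Det\bigl((\bbL''\cap g''\bbL'')/M''\bigr)$ cancels from the difference using the inverse structure in $B\calP ic^\bbZ$. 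Replace "finite-dimensional" by "$1$-Tate (in general infinite-dimensional)" and your proof stands; the remaining coherence bookkeeping you describe is exactly what the paper's one-line justification sweeps under "it is not difficult".
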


\begin{lem}\label{t} Let $P$ be subgroup of $\GL(\bbV)$ which preserves a lattice or a colattice in $\bbV$,
then the central extension restricted to $P$ can be trivialized.
\end{lem}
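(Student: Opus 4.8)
The plan is to mimic the proof of Lemma~\ref{prop3.4} in the one-dimensional setting, now using Lemma~\ref{fil2} in place of Lemma~\ref{fil}. Suppose first that $P$ preserves a lattice $\bbL\subset\bbV$. By Definition~\ref{latcol} we have $\bbL\in\on{Pro}(\Tate_1)$ and $\bbV/\bbL\in\on{Ind}(\Tate_1)$, and both $\on{Pro}(\Tate_1)$ and $\on{Ind}(\Tate_1)$ embed into $\Tate_2$; hence the admissible sequence
\[0\lrto\bbL\lrto\bbV\lrto\bbV/\bbL\lrto 0\]
is a short exact sequence of $2$-Tate vector spaces preserved by $P$. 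By Lemma~\ref{fil2} there is then a canonical $1$-isomorphism $\calD et_\bbL+\calD et_{\bbV/\bbL}\simeq\Det_\bbV$ in $H^2(BP,\calP ic^\bbZ)$. So it suffices to show that each of $\Det_\bbL$ and $\Det_{\bbV/\bbL}$ is trivial on $P$ (where these are understood as pulled back along the restriction homomorphisms $P\to\GL(\bbL)$ and $P\to\GL(\bbV/\bbL)$).

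To prove these triviality statements I would exploit the freedom in choosing the lattice entering the construction~\eqref{pictorsor}. For $\bbL\in\on{Pro}(\Tate_1)$, the object $\bbL$ is a lattice in itself, since $\bbL\in\on{Pro}(\Tate_1)$ and $\bbL/\bbL=0\in\on{Ind}(\Tate_1)$. Building the central extension $\Det_\bbL$ from this distinguished lattice, every $g$ in the image of $P\to\GL(\bbL)$ satisfies $g\bbL=\bbL$, so $\Det_\bbL(g)=\Det(\bbL\mid g\bbL)$ is the unit $\calP ic^\bbZ$-torsor $\calP$; thus $\Det_\bbL$ is the trivial central extension on $P$. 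Dually, for $\bbV/\bbL\in\on{Ind}(\Tate_1)$ the zero subspace is a lattice (as $0\in\on{Pro}(\Tate_1)$ and $(\bbV/\bbL)/0=\bbV/\bbL\in\on{Ind}(\Tate_1)$), and with this choice $\Det_{\bbV/\bbL}(g)=\Det(0\mid g\cdot 0)=\calP$ for all $g$, so $\Det_{\bbV/\bbL}$ is trivial on $P$ as well. Combining these with the $1$-isomorphism above shows that $\Det_\bbV$ restricted to $P$ is isomorphic to the trivial central extension, hence can be trivialized.

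The colattice case is entirely parallel. If $P$ preserves a colattice $\bbL^c$, then $\bbL^c\in\on{Ind}(\Tate_1)$ and $\bbV/\bbL^c\in\on{Pro}(\Tate_1)$, and I would apply Lemma~\ref{fil2} to the sequence $0\to\bbL^c\to\bbV\to\bbV/\bbL^c\to 0$, reducing to the triviality of $\Det_{\bbL^c}$ (built from the lattice $0$) and of $\Det_{\bbV/\bbL^c}$ (built from the whole space as its own lattice), exactly as before.

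I do not expect a genuine obstacle here, since the statement is the $2$-Tate analogue of Lemma~\ref{prop3.4} and the real content has been pushed into Lemma~\ref{fil2}. The only point requiring care is to check that the central extension built from these degenerate choices of lattice is trivial \emph{together with its structure data}, i.e. that the canonical $1$-isomorphisms~\eqref{deiso} and the associativity $2$-isomorphisms~\eqref{assoc} reduce to the canonical ones for the trivial central extension $\calP ic^\bbZ$. This is immediate from the normalizations built into Proposition~\ref{det gerbe}, namely the cases $\calV_1=0$ and $\calV_1=\calV$ of the $1$-isomorphism~\eqref{aaa}.
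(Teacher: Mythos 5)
Your proof is correct and follows exactly the route the paper intends: the paper presents Lemma~\ref{t} as a direct generalization of Lemma~\ref{prop3.4} (stating only that it "is not difficult to prove"), and your argument is precisely that generalization — apply Lemma~\ref{fil2} to the sequence $0\to\bbL\to\bbV\to\bbV/\bbL\to 0$ (resp. the colattice sequence) and observe that the degenerate lattice choices ($\bbL$ as a lattice in itself, $0$ as a lattice in $\bbV/\bbL$) make the two factor central extensions manifestly trivial via formula~\eqref{pictorsor}. Your closing remark, that the normalizations $\calV_1=0$ and $\calV_1=\calV$ in Proposition~\ref{det gerbe} guarantee the trivialization is compatible with the structure data~\eqref{deiso} and~\eqref{assoc}, is exactly the content the paper leaves implicit.
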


Let $k'/k$ be a finite field extension, and $\bbK=k'((t))((s))$ be a
two-dimensional local field. Then $\bbK$ has a natural structure as
a $2$-Tate vector space over $k$. The group $H=\bbK^\times$ acts on
$\bbK$ by left multiplications, which gives rise to an embedding
$H\to\GL(\bbK)$.

\begin{thm} \label{2-tame} For $f,g,h\in H$, one has
\[C^{\Det}_3(f,g,h)=\on{Nm}_{k'/k}\{f,g,h\},\]
where the map $C^\Det_3$ is  constructed in lemma~\ref{C3} and
$\{\cdot,\cdot,\cdot\}$ is  the two-dimensional tame symbol.
\end{thm}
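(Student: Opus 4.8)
The plan is to show that both sides of the claimed identity are anti-symmetric tri-multiplicative homomorphisms $H^3\to k^\times$ and then to match them on a convenient set of generators. That $C_3^{\Det}$ is such a homomorphism is exactly proposition~\ref{C3} (note $\pi_1(\calP ic^\bbZ)=k^\times$); that the two-dimensional tame symbol is anti-symmetric and multiplicative in each argument is clear from its definition as a composite of boundary maps in Milnor $K$-theory, or directly from \eqref{ts3}--\eqref{sgn}. Fix the lattice $\bbL=k'((t))[[s]]$ used in \eqref{pictorsor}, and let $\nu_1:\bbK^\times\to\bbZ$ be the $s$-adic valuation. The first key remark is that every $f\in\bbK^\times$ with $\nu_1(f)=0$ lies in $k'((t))[[s]]^\times$ and hence satisfies $f\bbL=\bbL$. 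Consequently the subgroup $H_0=\{f:\nu_1(f)=0\}$ preserves the lattice $\bbL$, so by lemma~\ref{t} the restriction of $\Det$ to $H_0$ is trivial, and corollary~\ref{trivial} gives $C_3^{\Det}(f,g,h)=1$ whenever $f,g,h\in H_0$. A direct check from \eqref{nuk}--\eqref{sgn} shows the two-dimensional tame symbol also vanishes on $H_0^3$ (all the relevant values of $\nu_\bbK$ vanish), so the two sides agree there.

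Next I would use tri-multiplicativity and anti-symmetry to reduce to a single essential case. Writing an arbitrary element as $s^{\nu_1(f)}f_0$ with $f_0\in H_0$ and expanding $C_3^{\Det}(s^{a}f_0,s^{b}g_0,s^{c}h_0)$ by multiplicativity, every term with all three arguments in $H_0$ vanishes by the previous step, and every term containing two or three factors of $s$ vanishes by anti-symmetry. Hence $C_3^{\Det}$ is determined by the values $C_3^{\Det}(s,g_0,h_0)$ with $g_0,h_0\in H_0$ (the other surviving terms being $\pm$ of these by anti-symmetry), and the same reduction applies verbatim to $\on{Nm}_{k'/k}\{\cdot,\cdot,\cdot\}$. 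Thus it suffices to prove $C_3^{\Det}(s,g_0,h_0)=\on{Nm}_{k'/k}\{s,g_0,h_0\}$ for $g_0,h_0$ of $s$-adic valuation zero.

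The heart of the argument is to identify this with the one-dimensional story on $K=k'((t))$. By definition $C_3^{\Det}(s,g_0,h_0)=\on{Comm}(C_s^{\Det})(g_0,h_0)$, so I would first analyze the homomorphism $C_s^{\Det}:H_0\to\calP ic^\bbZ$. Since $g_0\in H_0$ preserves $\bbL$, the torsor $\Det_\bbK(g_0)$ is canonically trivial, while $\Det_\bbK(s)=\Det(\bbL\mid s\bbL)\simeq-\Det(\bbL/s\bbL)=-\Det(K)$ because $s\bbL\subset\bbL$ with quotient the $1$-Tate space $K$. The action of $g_0$ on $\bbL/s\bbL=K$ is multiplication by its residue $\bar g_0\in K^\times$, and feeding the exact sequence $0\to s\bbL\to\bbL\to K\to 0$ together with the automorphism $g_0$ into the compatibility diagram \eqref{diag2} of proposition~\ref{det gerbe} yields a canonical isomorphism $C_s^{\Det}\simeq\pm\,\Det_K\circ r$ in $H^1(BH_0,\calP ic^\bbZ)$, where $r:H_0\to K^\times$ is the residue map and $\Det_K$ is the one-dimensional determinant homomorphism of proposition~\ref{DetV}. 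Applying remark~\ref{2.5} (invariance of $\on{Comm}$ under isomorphism) and proposition~\ref{tame symbol}, I then obtain $\on{Comm}(C_s^{\Det})(g_0,h_0)=\on{Nm}_{k'/k}\{\bar g_0,\bar h_0\}^{\pm 1}$, the one-dimensional tame symbol in $K$. Finally, a direct evaluation of \eqref{ts3}--\eqref{sgn} in this case gives $\nu_\bbK(g_0,h_0)=0$, $\nu_\bbK(h_0,s)=\nu_2(\bar h_0)$, $\nu_\bbK(s,g_0)=-\nu_2(\bar g_0)$ and $\on{sgn}(s,g_0,h_0)=(-1)^{\nu_2(\bar g_0)\nu_2(\bar h_0)}$, so that $\{s,g_0,h_0\}$ reduces exactly to the one-dimensional tame symbol $\{\bar g_0,\bar h_0\}$ of \eqref{ts2}; taking norms matches the two expressions.

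The main obstacle I anticipate is precisely the bookkeeping of signs and orientations in the identification $C_s^{\Det}\simeq\pm\,\Det_K\circ r$: one must pin down the correct sign (equivalently, whether the residue sequence is used covariantly or contravariantly, and the $(-1)$ coming from $\Det_\bbK(s)=-\Det(K)$) so that the inverse appearing in proposition~\ref{tame symbol} is reconciled with the non-inverted right-hand side of the theorem, and so that the graded ($\bbZ$-degree) signs built into the commutativity constraint of $\calP ic^\bbZ$ reproduce exactly the factor $\on{sgn}(s,g_0,h_0)$. Getting every such sign right is exactly the point where the use of the non-strictly commutative groupoid $\calP ic^\bbZ$, rather than $\calP ic$, is essential, as emphasized in the remark following lemma~\ref{fil}.
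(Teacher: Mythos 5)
Your treatment of the ``mixed'' case is essentially the paper's own argument: the paper also reduces by anti-symmetry and tri-multiplicativity, and also proves $C_3(f,g,s)=\on{Nm}_{k'/k}\{\bar f,\bar g\}$ by identifying $C_s$ with the minus of $\Det_K\circ\pi_s$ on the subgroup preserving $\calO_\bbK$ and then invoking proposition~\ref{tame symbol}. But there is a genuine gap in your reduction step. The claim that ``every term containing two or three factors of $s$ vanishes by anti-symmetry'' is false: anti-symmetry of a map into $k^\times$ only says that transposing two arguments inverts the value, so on a triple with a repeated entry it forces the value $x$ to satisfy $x^2=1$, i.e.\ $x=\pm 1$, not $x=1$. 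In fact both sides of the theorem are nontrivial on exactly these triples: a direct computation from \eqref{nuk}--\eqref{sgn} gives $\{f,s,s\}=(-1)^{\nu_2(\bar f)}$ for $f\in\calO_\bbK^\times$, and correspondingly the paper proves (its case (iii)) that $C_3(f,s,s)=(-1)^{\nu_2(\bar f)[k':k]}$. So your argument, taken literally, would establish the false statement that both maps are alternating.

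Nor can this be repaired formally: agreement of two anti-symmetric tri-multiplicative maps on $H_0^3$ and on triples $(s,g_0,h_0)$ does not imply agreement everywhere. For example, $\delta(f,g,h)=(-1)^{\nu_1(f)\nu_1(g)\nu_1(h)}$ is tri-multiplicative and anti-symmetric (its values are their own inverses), and is trivial on every triple you check, yet $\delta(s,s,s)=-1$; similarly one can cook up such a $\delta$ that is nontrivial only on triples of the form $(f_0,s,s)$. Hence the repeated-$s$ cases --- the paper's cases (iii) ($f\in\calO_\bbK^\times$, $g=h=s$) and (iv) ($f=g=h=s$) --- carry independent content and must be verified separately. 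The paper handles (iii) by feeding the exact sequence $0\to s\calO_\bbK/s^2\calO_\bbK\to\calO_\bbK/s^2\calO_\bbK\to\calO_\bbK/s\calO_\bbK\to 0$ into lemma~\ref{fil}, identifying $C_2(s^2,f)\simeq C_2(s,f)+C_2(s,f)$ with the resulting decomposition, and then using the graded sign in the commutativity constraint of $\calP ic^\bbZ$ (this, not the mixed case, is where the essential use of $\calP ic^\bbZ$ versus $\calP ic$ and the factor $(-1)^{\nu_2(\bar f)[k':k]}$ appear); case (iv) is handled using the lattice $k'[[t]]((s))$, which is invariant under multiplication by $s$. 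Your proposal needs these two arguments added; the sign-bookkeeping issue you flag at the end concerns only the mixed case, which is the part you did handle correctly.
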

In what follows, we will denote the bimultiplicative homomorphism
$C_2^{\calD et}$ by $C_2$, the homomorphism $C_g^{\calD et}$ by
$C_g$ and the map $C_3^{\calD et}$ by $C_3$.
\begin{proof}Since both maps $C_3$ and $\on{Nm}_{k'/k}\{\cdot,\cdot,\cdot\}$ are anti-symmetric and
tri-mul\-ti\-pli\-ca\-ti\-ve, we just need to consider the following
cases: (i) $f,g,h\in\calO_{\bbK}^\times$; (ii)
$f,g\in\calO_{\bbK}^\times,h=s$; (iii)
$f\in\calO_{\bbK}^\times,g=h=s$; (iv) $f=g=h=s$. Here
$\calO_\bbK=k'((t))[[s]]$ is the ring of integers of the field
$\bbK$, which is also a lattice in $\bbK$. We will fix $\bbL
=\calO_\bbK$.

In the first case, we have that both $C_3$ and
$\on{Nm}_{k'/k}\{\cdot,\cdot,\cdot\}$ are trivial (to see that $C_3$
is trivial, one uses lemma~\ref{t}).

\medskip

According to formulas~\eqref{nuk}-\eqref{sgn}, the second case
amounts to proving that
\[C_3(f,g,s)=\on{Nm}_{k'/k}\{\bar{f},\bar{g}\},\]
where $\bar{f},\bar{g}$ are the image of elements $f,g$ under the
map $\calO_\bbK^\times\to K^\times$.

Let us consider a little more general situation. Let
$f,g\in\GL(\bbK)$ that leave the lattice $\calO_\bbK$ invariant, and
let $h\in\GL(\bbK)$ such that $h\calO_\bbK\subset\calO_\bbK$. Let
$\calV=\calO_\bbK/h\calO_\bbK$, which is a $1$-Tate vector space
over the field $k$. We assume that $f,g,h$ mutually commute with
each other. Then $f,g:\calO_\bbK\to\calO_\bbK$ induce automorphisms
$\pi_h(f),\pi_h(g):\calV\to\calV$.
 Let $\calD et$ be the central extension of $\GL(\bbK)$ by $\calP ic^\bbZ$ defined by the lattice $\bbL=\calO_\bbK$.
 By definition,
 under the isomorphism $\calZ:\calP ic^\bbZ\to\Hom_{\calP ic^\bbZ}(\calD et(\calO_\bbK|hg\calO_\bbK),\calD
 et(\calO_\bbK|hg\calO_\bbK))$, the $1$-isomorphism $C_2(h,g)$ corresponds
 to the following composition of $1$-isomorphisms of $\calP
 ic^\bbZ$-torsors
\[\small\begin{split}\calD et(\calO_\bbK | hg\calO_\bbK)\to\calD et(\calO_\bbK | h\calO_\bbK)+\calD
et(h\calO_\bbK | hg\calO_\bbK)\to\calD et(\calO_\bbK | h\calO_\bbK)+\calD et(\calO_\bbK | g\calO_\bbK)\\
\to\calD et(\calO_\bbK | g\calO_\bbK)+\calD et(\calO_\bbK |
h\calO_\bbK)\to\calD et(\calO_\bbK | g\calO_\bbK)+\calD
 et(g\calO_\bbK | gh\calO_\bbK)\to\calD et(\calO_\bbK |
 gh\calO_\bbK) \mbox{.}
\end{split}\]

Using the fact that $g\calO_\bbK=\calO_\bbK$ and
proposition~\ref{det gerbe}, the above $1$-isomorphism is
canonically $2$-isomorphic to the following $1$-isomorphism
\[\calD et(\calO_\bbK \mid hg\calO_\bbK)\stackrel{\calZ(\calD et_{\calV}(\pi(g)))}{\longrightarrow}
\calD et(\calO_\bbK \mid hg\calO_\bbK).\] Therefore, there is a
canonical $2$-isomorphism $C_2(h,g)\simeq -\calD
et_\calV(\pi_h(g))$, because, by definition (see
formula~\eqref{pictorsor}), $\calD et(\calO_\bbK \mid h\calO_\bbK)
\simeq - \calD et(\calV)$.
 One readily checks by the
construction of lemma-definition~\ref{C2}, that these
$2$-isomorphisms fit into the following commutative diagrams
\begin{equation}\label{I}
\xymatrix{C_2(h,fg)\ar[r]\ar[d]&C_2(h,f)+C_2(h,g)\ar[d]\\
-\Det_\calV(\pi_h(fg))\ar[r]&-\Det_\calV(\pi_h(f))-\Det_\calV(\pi_h(g))
}\end{equation} where the natural isomorphism
$\Det_\calV(\pi_h(fg))\to\Det_\calV(\pi_h(f))+\Det_\calV(\pi_h(g))$
comes from proposition~\ref{DetV}. (We have to use that $\calD
et((0))$ is canonically isomorphic to $\calP ic^{\bbZ}$, and
$\calO_\bbK/ g\calO_\bbK= (0)$, where $(0)$ is the zero-space. )

We now return to our proof of case (ii). Let $P_s$ be the subgroup
of $\GL(\bbK)$ consisting of elements that preserve the lattice
$\calO_\bbK$ and commute with the element $s$. Then elements in the
group $P_s$ also preserve the lattice $s\calO_\bbK$, and therefore
induce a group homomorphism
\[\pi_s :P_s\to \GL(K),\]
because $\bbK = \calO_\bbK / s \calO_\bbK$. Then the commutative
diagram \eqref{I} amounts to the following lemma.

\begin{lem}The homomorphism $C_s:P_s\to\calP ic^\bbZ$ is isomorphic to
the minus (or the inverse) of the following homomorphism
\[\Det_K\circ\pi_s \: : \: P_s\to \GL(K)\to \calP ic^\bbZ.\]
\end{lem}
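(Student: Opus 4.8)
The plan is to deduce the lemma directly from diagram~\eqref{I}, which was established just above in the general situation of three mutually commuting elements $h,f,g\in\GL(\bbK)$ with $h\calO_\bbK\subset\calO_\bbK$ and $f\calO_\bbK=g\calO_\bbK=\calO_\bbK$. First I would record the relevant structure maps. Every $f\in P_s$ fixes the lattice $\calO_\bbK$ and commutes with $s$, hence also preserves $s\calO_\bbK$; therefore $f$ induces an automorphism of the quotient $\calV:=\calO_\bbK/s\calO_\bbK=K$, and this is precisely the group homomorphism $\pi_s\colon P_s\to\GL(K)$. Composing with the homomorphism $\Det_K$ of proposition~\ref{DetV} yields an object $\Det_K\circ\pi_s\in H^1(BP_s,\calP ic^\bbZ)$, whose inverse in this Picard groupoid is by definition $-\Det_K\circ\pi_s$. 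The goal is then to produce an isomorphism between $C_s$ (the restriction to $P_s$ of $C_s^{\Det}$) and $-\Det_K\circ\pi_s$ in $H^1(BP_s,\calP ic^\bbZ)$.

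Next I would apply the general construction preceding the statement with $h=s$. Since $s\calO_\bbK\subset\calO_\bbK$, and since any $f\in P_s$ commutes with $s$ and fixes $\calO_\bbK$, the hypotheses hold with $\calV=K$ and $\pi_h=\pi_s$; consequently, for each $f\in P_s$ there is a canonical $2$-isomorphism $\theta_f\colon C_s(f)=C_2(s,f)\simeq -\Det_K(\pi_s(f))$, obtained exactly as the canonical $2$-isomorphism $C_2(h,g)\simeq -\Det_\calV(\pi_h(g))$ derived above. This family $\{\theta_f\}_{f\in P_s}$ is the candidate morphism in $H^1(BP_s,\calP ic^\bbZ)$.

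Finally I would check that $\{\theta_f\}$ respects the homomorphism structures. Regarding $P_s$ as a discrete monoidal category, the only thing to verify is the compatibility square relating $\theta_{fg}$ with $\theta_f+\theta_g$: its top edge is the bimultiplicativity isomorphism $C_s(fg)=C_2(s,fg)\simeq C_2(s,f)+C_2(s,g)$ coming from lemma-definition~\ref{C2}, and its bottom edge is the isomorphism $-\Det_K(\pi_s(fg))\simeq -\Det_K(\pi_s(f))-\Det_K(\pi_s(g))$ coming from the homomorphism $\Det_K$ of proposition~\ref{DetV} together with the group homomorphism $\pi_s$. This square is precisely diagram~\eqref{I} specialized to $h=s$, which commutes. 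Hence $\{\theta_f\}$ is a monoidal natural transformation and furnishes the desired isomorphism $C_s\simeq -\Det_K\circ\pi_s$ in $H^1(BP_s,\calP ic^\bbZ)$.

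The main, and essentially only, obstacle is conceptual rather than computational: recognizing that diagram~\eqref{I} is exactly the monoidal coherence required for an isomorphism in $H^1(BP_s,\calP ic^\bbZ)$, so that no further higher coherence need be checked. This is because the target $\calP ic^\bbZ$ is an ordinary Picard groupoid and $P_s$ is discrete, so a morphism in $H^1(BP_s,\calP ic^\bbZ)$ amounts to a family of isomorphisms making a single square commute for each pair $(f,g)$, the associativity compatibility being automatic from the coherence already present in $C_2^{\Det}$ and in $\Det_K$. One should, however, take care at the outset to confirm that $f,g\in P_s$ indeed satisfy all the hypotheses needed to invoke the general construction and diagram~\eqref{I}, namely that $f,g,s$ are pairwise commuting, that $f\calO_\bbK=g\calO_\bbK=\calO_\bbK$, and that $s\calO_\bbK\subset\calO_\bbK$.
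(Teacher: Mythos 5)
Your proposal is correct and coincides with the paper's own proof: the paper justifies this lemma exactly by remarking that diagram~\eqref{I}, specialized to $h=s$ and $\calV=K$, supplies both the family of $2$-isomorphisms $C_2(s,f)\simeq -\Det_K(\pi_s(f))$ and the single monoidal-compatibility square that a morphism in $H^1(BP_s,\calP ic^{\bbZ})$ must satisfy, which is precisely what you spell out. One caveat on your final checklist: two arbitrary elements $f,g\in P_s$ need not commute with each other (they are only required to commute with $s$), so ``pairwise commuting'' cannot be confirmed in general; this is harmless, since neither the construction of the $2$-isomorphisms nor the commutativity of diagram~\eqref{I} actually uses commutation of $f$ with $g$ (the top edge only needs $f,g\in Z_{\GL(\bbK)}(s)$ for the bimultiplicativity isomorphism of lemma-definition~\ref{C2}), but be aware that the paper shares the same imprecision by stating the general situation under mutual commutation of $f,g,h$.
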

By proposition~\ref{tame symbol}, we thus obtain that
\[C_3(f,g,s)=C_3(s,f,g)=\Comm(C_s)(f,g)=\on{Nm}_{k'/k}\{\bar{f},\bar{g}\}\] for $f,g\in\calO_\bbK^\times\subset\GL(\bbK)$.
The case (ii) follows.

\medskip

Case (iii). According to formulas~\eqref{nuk}-\eqref{sgn}, one needs
to show
\[C_3(f,s,s)=C_f(s,s)= \on{Nm}_{k'/k} (-1)^{\nu_2(\bar{f})}=
(-1)^{(\nu_2(\bar{f})[k':k])} = (-1)^{(\nu_2(\bar{f})[k':k])^2}
\mbox{.}\] We have the following exact sequence of $1$-Tate vector
spaces
\[0\lrto\frac{s\calO_\bbK}{s^2\calO_\bbK}\lrto\frac{\calO_\bbK}{s^2\calO_\bbK}\lrto\frac{\calO_\bbK}{s\calO_\bbK}
\lrto 0 \mbox{.}\] and therefore by lemma~\ref{fil}, for any element
$p \in P_s$, there is a canonical isomorphism in $\calP ic^\bbZ$
\begin{equation}\label{x}\calD et_{\frac{\calO_\bbK}{s^2\calO_\bbK}}(\pi_{s^2}(p))
\simeq\calD
et_{\frac{s\calO_\bbK}{s^2\calO_\bbK}}(\pi_{s^2}(p))+\calD
et_{\frac{\calO_\bbK}{s\calO_\bbK}}(\pi_{s^2}(p))\mbox{.}
\end{equation}
On the other hand, we have already shown that there are canonical isomorphisms
\begin{equation}\label{y}C_2(s,p)\simeq -\calD
et_{\frac{\calO_\bbK}{s\calO_\bbK}}(\pi_s(p)) =-\calD
et_{\frac{\calO_\bbK}{s\calO_\bbK}}(\pi_{s^2}(p))  \mbox{,} \qquad
C_2(s^2,p)\simeq-\calD
et_{\frac{\calO_\bbK}{s^2\calO_\bbK}}(\pi_{s^2}(g)) \mbox{.}
\end{equation}
Again, by checking the construction as in lemma-definition~\ref{C2},
one obtains that under the isomorphisms~\eqref{y}, the canonical
isomorphism $C_2(s^2,p)\simeq C_2(s,p)+C_2(s,p)$ corresponds to
\eqref{x}.

Now let $p=f$ as in the Case (iii). We know that $\calD
et_{\frac{\calO_\bbK}{s\calO_\bbK}}(\pi(f))$ is a graded line of
degree $\nu_2(\bar{f})[k':k]$. Therefore, using $C_2(a,b) \simeq    -C_2(b,a)$ for any commuting elements  $a,b \in \GL(\bbK)$, we
obtain that Case (iii) follows from the definition of the
commutativity constraints in $\calP ic^\bbZ$.

\medskip

Case (iv). One needs to show that $C_s(s,s)=1$. One can easily show
that there are canonical isomorphisms $C_2(s,s)\simeq\ell_0,
C_2(s^2,s)\simeq\ell_0$, and the canonical isomorphism
$C_2(s^2,s)\simeq C_2(s,s)+C_2(s,s)$ corresponds to $\ell_0\simeq   \ell_0+\ell_0$.
(We used that for the $k'$-space $M= k'[[t]]((s))$
we have $s M =M$, and the $k'$-space $M$ induce a lattice in every
$1$-Tate vector space $s^n \calO_\bbK / s^{n+l} \calO_\bbK$, $n \in
\bbZ$, $l \in \bbN$.) This case also follows.
\end{proof}

\section{Reciprocity laws} \label{reslaws}
We will use the ad\`ele theory on schemes.  Ad\`eles on algebraic
surfaces were introduced by Parshin in~\cite{Pa1}.  On arbitrary
noetherian schemes they were considered by Beilinson in~\cite{Be1}.
See the proof of part of results of~\cite{Be1} in~\cite{H}. A survey
of ad\`eles can be found in~\cite{O4}.

We fix a perfect field $k$.

\subsection{Weil reciprocity law} \label{Weil}
To fix the idea, let us first revisit the Weil reciprocity law.
Let $C$ be an irreducible  projective curve over a field $k$. Let
$k(C)$ be the field of rational functions on the curve $C$. For a
closed point
 $p
\in C$ let $\hat{\oo}_p$ be the completion by maximal ideal $m_p$ of
the local ring $\oo_p$ of point $p \in C$. Let a ring $K_p$ be the
localization of the ring $\hat{\oo}_p$ with respect to the
multiplicative system $\oo_p \setminus 0$. (If $p $ is a smooth
point, then $K_p=k(C)_p$ is the fraction field of the ring
$\hat{\oo}_p$, and $K_p = k(p)((t_p))$, $\hat{\oo}_p = k(p)[[t_p]]$,
where $k(p)$ is the residue field of the point $p$, $t_p$ is a local
parameter at $p$. For a non-smooth point $p \in C$, the ring $K_p$
is a finite direct product of one-dimensional local fields.)

We have that $K_p$ is a $1$-Tate vector space over $k$, and
$\hat{\oo}_p$ is a lattice in $K_p$ for any point $p \in C$.

 For any coherent subsheaf
 $\ff$
 of the constant sheaf $k(C)$ on the
curve $C$ we consider the following ad\`ele complex $\ad_C(\ff)$:
$$
\da_{C,0}(\ff) \oplus \da_{C,1}(\ff) \lrto \da_{C,01}(\ff)
$$
whose cohomology groups coincide with the cohomology groups
$H^*(C,\ff)$. Let us recall that
$$
\da_{C, 0} (\ff) = k(C)\otimes_{\oo_C} \ff \mbox{,} \qquad \da_{C,
1} (\ff)=\prod_{p \in C} \hat{\oo}_p \otimes_{\oo_C} \ff \mbox{,}
$$
$$
\da_{C, 01}(\ff) = \da_C = {\prod_{p \in C}}'
K_p\otimes_{\oo_C}\ff\mbox{,}
$$
where $\prod'$ denotes the restricted (ad\`ele) product with respect
to $\prod\limits_{p \in C} \hat{\oo}_p$. Observe that since $\ff$ is
a subsheaf of $k(C)$, we have
$$
k(C)\otimes_{\oo_C}\ff=k(C),\qquad K_p\otimes_{\oo_C}\ff=K_p.
$$
The ad\`ele ring  $\da_C$ is a $1$-Tate vector space over $k$. This
is because
$$
\da_C =\mathop{\underleftarrow{\lim}}\limits_{\g \subset k(C)}
 \mathop{\underrightarrow{\lim}}\limits_{\h \subset k(C)} \da_{C,1}(\h) /
 \da_{C,1}(\g) \mbox{,}
$$
and $\dim_k \da_{C,1}(\h) /
 \da_{C,1}(\g) < \infty$ for coherent subsheaves $ 0 \ne \g \subset \h$ of
 $k(C)$. (We used that $\da_{C,1}(\h) /
 \da_{C,1}(\g) = \da_{C,1}= \bigoplus\limits_{p
\in C} \hat{\oo}_p \otimes_{\oo_C} (\h /\g )$). For any coherent
subsheaf $\ff$ of $ k(C)$ the space $\da_{C,1}(\ff)$
 is a lattice in the space $\da_C$.
Hence, we have that the $k$-space $k(C)$ is a colattice in $\da_C$,
since from the adleic complex $\ad(\ff)$ it follows
$$
\dim_k k(C) \cap \da_{C,1}(\ff) = \dim_k H^0(C, \ff) < \infty
\mbox{,}
$$
$$
 \dim_k \da_C / (k(C) + \da_{C,1}(\ff)) = \dim_k H^1(C, \ff) <
\infty \mbox{.}
$$

Let a $p$ be a point of $ C$ and   $f$, $g$  a pair of elements of
 $K_p^{\times}$. If $K_p = k(p)((t_p))$, then we denote by
$\{f,g\}_p$ the element from $k(p)^{\times}$ which is the
corresponding tame symbol. If the ring $K_p$ is isomorphic to the
finite product of fields isomorphic to $k(p)((t))$, then we denote
by $\{f,g\}_p$ the element from $k(p)^{\times}$ which is the same
finite product of the corresponding tame symbols. Recall that there
is the diagonal embedding $k(C) \hookrightarrow \da_C$.

\begin{prop}[Weil reciprocity law]
For any elements $f, g \in k(C)^{\times}$ the following product
contains only   finitely many nonequal to $1$ terms and
\begin{equation} \label{Weilform}
\prod_{p \in C}  \on{Nm}_{k(p)/k}\{f,g  \}_p  =1  \mbox{.}
\end{equation}
\end{prop}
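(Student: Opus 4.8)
The plan is to exploit the multiplication embedding $k(C)^\times \hookrightarrow \GL(\da_C)$: since $\da_C$ is a $1$-Tate vector space over $k$, every $f \in k(C)^\times$ acts as an automorphism of $\da_C$, and any two such automorphisms commute because $k(C)$ is a field. Proposition~\ref{DetV} applied to $\calV = \da_C$ produces a homomorphism $\Det_{\da_C}\colon \GL(\da_C) \to \calP ic^\bbZ$, and restricting the associated pairing $\on{Comm}(\Det_{\da_C})$ to the abelian subgroup $k(C)^\times$ gives an anti-symmetric bimultiplicative map $k(C)^\times \times k(C)^\times \to \pi_1(\calP ic^\bbZ) = k^\times$. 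The whole proof is then a comparison of two evaluations of this pairing on a fixed pair $(f,g)$.

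First I would compute it globally. The field $k(C)$ is a colattice in $\da_C$, and multiplication by any $f \in k(C)^\times$ preserves it, since $f\cdot k(C) = k(C)$. Thus $k(C)^\times$ preserves a colattice, so by Lemma~\ref{prop3.4} the restriction of $\Det_{\da_C}$ to $k(C)^\times$ is the trivial homomorphism; by Remark~\ref{2.5} and Corollary~\ref{2.8} this forces $\on{Comm}(\Det_{\da_C})(f,g) = 1$ for all $f,g \in k(C)^\times$.

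Next I would compute the same pairing by a local-to-global decomposition. Fix $f,g$ and let $S$ be the finite set of closed points at which $f$ or $g$ has a zero or pole, so that $f,g \in \hat{\oo}_p^\times$ for $p \notin S$. Splitting off the finitely many places of $S$ gives a decomposition in the exact category $\Tate_1$,
\[\da_C \;\simeq\; \Big(\bigoplus_{p \in S} K_p\Big) \;\oplus\; \da_C^S, \qquad \da_C^S := {\prod_{p \notin S}}' K_p,\]
which is preserved by multiplication by $f$ and by $g$. Writing $P \subset \GL(\da_C)$ for the subgroup preserving this decomposition (it contains $f$ and $g$), repeated application of Lemma~\ref{fil} yields a canonical isomorphism $\Det_{\da_C}|_P \simeq \sum_{p \in S} (\Det_{K_p}\circ \pr_p) + \Det_{\da_C^S}\circ \pr_S$ in $H^1(BP,\calP ic^\bbZ)$. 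Hence, by Corollary~\ref{Add} and the compatibility of $\on{Comm}$ with restriction,
\[\on{Comm}(\Det_{\da_C})(f,g) = \sum_{p \in S}\on{Comm}(\Det_{K_p})(f,g) + \on{Comm}(\Det_{\da_C^S})(f,g).\]
On $\da_C^S$ both $f$ and $g$ preserve the lattice $\prod_{p \notin S}\hat{\oo}_p$, so the last term vanishes by Lemma~\ref{prop3.4} and Corollary~\ref{2.8}. For each $p \in S$, Proposition~\ref{tame symbol} gives $\on{Comm}(\Det_{K_p})(f,g) = \on{Nm}_{k(p)/k}\{f,g\}_p^{-1}$; when $p$ is non-smooth, $K_p$ is a finite product of one-dimensional local fields, and I would first decompose $\Det_{K_p}$ accordingly via Lemma~\ref{fil}, matching the definition of $\{f,g\}_p$ as the product of the corresponding local tame symbols. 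Comparing with the global computation gives $\prod_{p \in S}\on{Nm}_{k(p)/k}\{f,g\}_p^{-1} = 1$, and since $\nu_p(f) = \nu_p(g) = 0$ for $p \notin S$ we have $\{f,g\}_p = 1$ there, so only finitely many factors are nontrivial and $\prod_{p \in C}\on{Nm}_{k(p)/k}\{f,g\}_p = 1$.

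The main obstacle is the local-to-global step: one must verify that splitting off the finite set $S$ really yields an \emph{admissible} decomposition in $\Tate_1$ — that the restricted product $\da_C^S$ is again a $1$-Tate vector space, that the splitting is exact, and that $f,g$ act compatibly on all summands — so that Lemma~\ref{fil} genuinely applies. Once this factorization is in place, the remainder is formal: it is the comparison of the two evaluations of the bimultiplicative pairing $\on{Comm}(\Det_{\da_C})$, fed by the local input of Proposition~\ref{tame symbol}.
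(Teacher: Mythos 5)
Your proposal is correct and follows essentially the same route as the paper: both arguments decompose $\da_C$ into the finitely many bad places plus the restricted product over the rest, use Lemma~\ref{fil} together with Lemma~\ref{prop3.4} (lattice $\prod_{p\notin S}\hat{\oo}_p$ away from $S$, colattice $k(C)$ globally) to show $\calD et_{\da_C}$ is simultaneously trivial and the sum of the local homomorphisms $\calD et_{K_p}$, and then conclude via Proposition~\ref{tame symbol}, Remark~\ref{2.5}, and Corollaries~\ref{Add} and~\ref{2.8}. The only presentational difference is that you phrase the conclusion as comparing two evaluations of the pairing $\on{Comm}(\Det_{\da_C})$, while the paper phrases it as an isomorphism of homomorphisms in $H^1$; these are the same argument.
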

\begin{proof} It is clear that, by proposition~\ref{tame symbol}, we can change  $\on{Nm}_{k(p)/k}\{f,g  \}_p \;$
to \\ $\Comm(\calD et_{K_p})(f,g)$ for all $p \in C$ in
formula~\eqref{Weilform}. There are points $p_1, \ldots, p_l \in C$
such that if $p \in C$ and $p \ne p_i$ $(1 \le i \le l)$, then $f
\hat{\oo}_p = \hat{\oo}_p$, $g \hat{\oo}_p = \hat{\oo}_p$, and
hence, by proposition~\ref{prop3.4}, $\Comm(\calD et_{K_p})(f,g) =1$
for  points $p \ne p_i$ $(1 \le i \le l)$.

We define the group $H$ as  the subgroup of the  group
$k(C)^{\times}$ generated by the elements $f$ and $g$. We apply
lemma~\ref{fil} to the following $1$-Tate $k$-vector spaces:
$$\calV = \da_C \mbox{, } \qquad \calV' = \da_{C
\setminus \{p_1, \ldots, p_l\}} \mbox{,}  \qquad \calV'' = \prod_{1
\le i \le l} K_{p_i} \mbox{.}$$ The group $H$ preserves the lattice
$\prod\limits_{p \in C \setminus \{p_1, \ldots, p_l  \}}
\hat{\oo}_p$ in the space $\calV'$. Therefore, by
proposition~\ref{prop3.4}, the homomorphism $\calD et_{\da_C}$ is
isomorphic to the homomorphism $\calD et_{\calV''}$, which is (again
by lemma~\ref{fil}) isomorphic to the sum  of homomorphisms $\calD
et_{K_{p_1}}, \ldots, \calD et_{K_{p_l}}$. Since the group $H$
preserves the colattice $k(C)$ in $\da_C$, the homomorphism $\calD
et_{\da_C}$ is isomorphic to the trivial one (by
proposition~\ref{prop3.4}). Now using remark~\ref{2.5} and
corollaries~\ref{Add}  and~\ref{2.8} we obtain
formula~\eqref{Weilform}.
\end{proof}
\begin{rmk}
To obtain the triviality of homomorphism~$\calD et_{\da_C} :
k(C)^{\times} \to \calP ic^{\bbZ}$ in an explicit way, one has to
use the following canonical isomorphism for any $g \in
k(C)^{\times}$:
\begin{equation} \label{detisom}
\calD et_{\da_C} (g) \: \simeq \: \det(H^* (\ad_C(g \oo_C))) \otimes
\det(H^* (\ad_C( \oo_C)))^{-1} \mbox{,}
\end{equation}
where for any coherent sheaf $\ff$ on $C$
\begin{multline*}
\det (H^*(\ad_C(\ff))) := \det (H^0 (\ad_C(\ff))) \otimes \det (H^1
 (\ad_C(\ff)))^{-1} \\ \simeq \det (H^0 (C, \ff) )\otimes \det (H^1
 (C, \ff))^{-1} \mbox{.}
\end{multline*}
(Formula~\eqref{detisom} easily follows from ad\`ele complexes and
formula~\eqref{grline} if we change in  formula~\eqref{grline} the
lattices $\calL$ and $g \calL$ in $\da_C$ to any two lattices coming
from nonzero coherent subsheaves $\g \subset \h$ of $k(C)$, and
change correspondingly in formula~\eqref{detisom} the sheaves $\oo$
and $g\oo$ to the sheaves $\g \subset \h$.) Now the  homomorphism
$\calD et_{\da_C}$ is isomorphic to the trivial one by
formula~\eqref{detisom} and the fact that multiplication on an
element $g \in k(C)^*$ gives a canonical isomorphism between ad\`ele
complexes $\ad_C(\oo_C)$ and $\ad_C(g\oo_C)$, which induce the
canonical isomorphism between $\det(H^* (\ad_C(\oo_C)))$ and
$\det(H^* (\ad_C( g\oo_C)))$.
\end{rmk}

\subsection{Parshin reciprocity laws} \label{sec5.2}
Let $X$ be an algebraic surface over the field $k$. We assume, for
simplicity, that $X$ is a smooth connected surface.

We consider pairs $x\in C$, where $C$ are irreducible curves on $X$ and $x$ are closed points on
$C$.  For every such pair one can define the ring
$K_{x,C}$, which will be a finite product of two-dimensional local fields, as follows. Assume that the curve
$C$ on $X$ has the following formal branches ${\mathbf C_1}, \ldots,
{\mathbf C_n}$ at the point $x \in C$, i.e.
$$C \mid_{\spec
\hat{\oo}_x} = \bigcup_{ 1 \le i \le n } {\mathbf C_i} \; \mbox{,}$$
where $\hat{\oo}_x$ is the completion of the local ring $\oo_x$ of a
point $x \in X$, and $ {\mathbf C_i}$ is irreducible in $\spec
\hat{\oo}_x$ for any $1 \le i \le n$. (Since we assumed $X$ is
smooth, $\hat{\oo}_x \simeq k(x)[[t_1,t_2]]$.) Now every $\mathbf
C_i$ defines a discrete valuation on the fraction field $\Frac
\hat{\oo}_x$. We define a two-dimensional local field $K_{x, \mathbf
C_i}$ as the completion of the field $\Frac \hat{\oo}_x$ with
respect to this discrete valuation, and let $\hat{\oo}_{x, \mathbf
C_i}$
 be the valuation ring. Then we define
$$
K_{x,C}: = \bigoplus\limits_{1 \le i \le n} K_{x, \mathbf C_i} \;
\mbox{,} \qquad
\hat{\oo}_{x,C}: = \bigoplus\limits_{1 \le i \le n} \hat{\oo}_{x,
\mathbf C_i} \; \mbox{,}
$$
Observe that if $x \in C$ is a smooth point, then $\hat{\oo}_{x,C} \simeq
k(x)((t))[[s]] $ and $ K_{x,C}\simeq k(x)((t))((s))$. It is clear that the ring
$\hat{\oo}_x$ diagonally embeds into the ring $K_{x,C}$.

Let us also define $B_x \subset K_{x,C} $ as
$\mathop{\underrightarrow{\lim}}\limits_{n > 0} s_C^{-n}
\hat{\oo}_x$, where a local equation $s_C =0$ determines $C$ on some
open $X \supset V \ni x$. It is clear that the subring $B_x$ of
$K_{x,C}$ does not depend on the choice of such $s_C$ when $V \ni
x$. If $x \in C$ is a smooth point, and $K_{x,C} =
k(x)((t))((s_C))$, where $s_C =0$ is a local equation of the curve
$C$ on $X$ near the point $x$ and $t=0$ defines a transversal curve
locally on $X$ near $x$, then $B_x = k(x)[[t]]((s_C))$.

Any ring $K_{x,C}$ is a $2$-Tate vector space over the field $k(x)$
(and therefore over the field $k$), and the ring $\hat{\oo}_{x,C}$ is a
lattice in $K_{x,C}$.

Let $f$, $g$, $h$ be from $K_{x,C}^{\times}$ such that $f =
\bigoplus\limits_{1 \le i \le n} f_i$, $g = \bigoplus\limits_{1 \le
i \le n} g_i$, $h = \bigoplus\limits_{1 \le i \le n} h_i$. Then we
define the following  element from $k(x)^{\times}$ as
\begin{equation}\label{around x,C}
\{f, g,h\}_{x,C} :=
\prod_{1 \le i \le n} \{f_i,g_i,h_i \}_{x, \mathbf C_i} \mbox{,}
\end{equation}
where $\{f_i,g_i,h_i \}_{x, \mathbf C_i}$ is the two-dimensional
tame symbol associated to the two-dimensional local field $K_{x,
\mathbf C_i}$ (cf. \S \ref{ts}).

\bigskip

Fix a point $x \in X$. For any free finitely generated
$\hat{\oo}_x$-module subsheaf
 $\ff$ of the constant sheaf $\Frac \hat{\oo}_x$ on the
scheme $\spec \hat{\oo}_x$ we consider the following ad\`ele complex
$\ad_{X,x}(\ff)$:
$$
\da_{X,x, 0}(\ff) \oplus \da_{X,x,1}(\ff) \lrto \da_{X,x, 01}(\ff)
\mbox{.}
$$
This is the ad\`ele complex on the $1$-dimensional scheme $U_x
:=\spec \hat{\oo}_x \setminus x$ for the sheaf $\ff \mid_{U_x}$,
and, hence, the cohomology groups of this complex coincide with the
cohomology groups $H^*(U_x,\ff \mid_{U_x} )$. By definition, we have
$$
\da_{X,x, 0} (\ff) = \Frac \hat{\oo}_x \mbox{,} \quad \da_{X,x, 1}
(\ff)=\prod_{{\mathbf C} \ni x} \hat{\oo}_{x,{\mathbf C}}
\otimes_{\hat{\oo}_x} \ff \mbox{,} \quad \da_{X,x, 01}(\ff) =
\da_{X,x} = {\prod_{ {\mathbf C} \ni x  }}' K_{p, {\mathbf C }}
\mbox{,}
$$
where the product is taken over all prime ideals $\mathbf C$ of
height $1$ of the ring $\hat{\oo}_x$, and  $\prod'$ denotes the
restricted (ad\`ele) product with respect to $\prod\limits_{\mathbf
C \ni x} \hat{\oo}_{x, \mathbf C }$.

Observe that the ad\`ele ring  $\da_{X, x}$ is a $2$-Tate vector
space over the field $k(x)$. This is because
$$
\da_{X,x} =\mathop{\underleftarrow{\lim}}\limits_{\g \subset \Frac
\hat{\oo}_x}
 \mathop{\underrightarrow{\lim}}\limits_{\h \subset \Frac \hat{\oo}_x} \da_{X,x,1}(\h) /
 \da_{X,x,1}(\g) \mbox{,}
$$
and $ \da_{X,x,1}(\h) /
 \da_{X,x,1}(\g) $ is a $1$-Tate vector space for free $\hat{\oo}_x$-module subsheaves $ 0 \ne \g \subset \h$ of
 $\Frac \hat{\oo_x}$. (We used that $ \da_{X,x,1}(\h) /
 \da_{X,x,1}(\g) = \bigoplus\limits_{\mathbf C \ni x} \hat{\oo}_{x,{\mathbf C}} \otimes_{\hat{\oo}_x} \h /\g$.)
 For any  free finitely generated $\hat{\oo}_x$-module subsheaf $\ff$ of $\Frac{\hat{\oo_x}}$ the space
 $\da_{X,x,1}(\ff)$
 is a lattice in the space $\da_{X,x}$.

 From~\cite[prop.~8]{O3} it follows that $k(x)$-vector spaces $H^0
 (\ad_{X,x}(\ff))$ and $H^1(\ad_{X,x}(\ff))$ are $1$-Tate vector
 spaces. Indeed, since $x$ is a smooth point of $X$,
 $$
H^0 (\ad_{X,x}(\ff)) = H^0(U_x, \ff \mid_{U_x}) = \ff
 $$
 is a projective limit of finite-dimensional $k(x)$-vector
 spaces $\ff / m_x^n \ff$ ($m_x$ is the maximal ideal of the ring $\hat{\oo}_x$),
 and
 $$
H^1 (\ad_{X,x}(\ff)) = H^1(U_x, \ff \mid_{U_x}) =
\mathop{\underrightarrow{\lim}}\limits_{n > 0} \Ext^2_{\hat{\oo}_x}
(\hat{\oo}_x / m_x^n, \ff) \mbox{,}
 $$
 where for any $n > 0$ the space $\Ext^2_{\hat{\oo}_x}
(\hat{\oo}_x / m_x^n, \ff)$ is a finite-dimensional over the field
$k(x)$ vector space (see, for example,~\cite[lemma~6]{O3}).

\bigskip

Fix an irreducible projective curve $C$ on $X$. For any invertible
$\oo_X$-subsheaf $\ff$ of the constant sheaf $k(X)$ on $X$ we
consider the following ad\`ele complex $\ad_{X,C}(\ff)$
$$
\da_{X,C, 0}(\ff) \oplus \da_{X,C,1}(\ff) \lrto \da_{X,C, 01}(\ff)
\mbox{.}
$$
where $$ \da_{X,C, 0}(\ff): = K_C, \quad \da_{X,C,
01}(\ff):=\da_{X,C}= \da_C((s_C)) \mbox{,}
$$
\begin{equation} \label{intersect}
 \da_{X,C,1}(\ff): =
(\prod\limits_{x \in  C} B_x \otimes_{\oo_X} \ff)  \cap \da_{X,C}
\mbox{.}
\end{equation}
 Here $K_C$ is the completion of the field $k(X)$ with
respect to the discrete valuation given by the curve $C$ on $X$. (If
$s_C = 0$ is a local equation of the curve $C$ on some open subset
$V$ of $X$ such that $V \cap C \ne \emptyset$, then $K_C =
k(C)((s_C))$.) The ring $\da_{X,C}$ is a subring of $\prod\limits_{x
\in C} K_{x,C}$, and does not depend on the choice of $s_C$. The
intersection~\eqref{intersect} is taken in the ring $\prod\limits_{x
\in C} K_{x,C}$.

We note that  from~\cite[\S~5.1]{O3} it follows
that the complex $\ad_{X,C}(\ff)$ coincides with the following
complex
$$\mathop{\underrightarrow{\lim}}\limits_n \mathop{\underleftarrow{\lim}}\limits_{m
> n} \ad_{(C, \oo_X/J_C^{m-n})} (\ff \otimes_{\oo_X} J_C^n / J_C^m)  \mbox{.}
$$
Here $J_C$ is the ideal sheaf of the curve $C$  on $X$, $(C,
\oo_X/J_C^{m-n})$ is a $1$-dimensional scheme which has the
topological space  $ C$ and the structure sheaf $\oo_X/J_C^{m-n}$,
and $\ad_{(C, \oo_X/J_C^{m-n})} (\ff \otimes_{\oo_X} J_C^n / J_C^m)$
is the ad\`ele complex of the coherent sheaf  $\ff \otimes_{\oo_X}
J_C^n / J_C^m$ on the scheme $(C, \oo_X/J_C^{m-n})$.  Hence and from
the proof of proposition~12 from~\cite{O3} we obtain that
$$
H^*(\ad_{X,C}(\ff)) = \mathop{\underrightarrow{\lim}}\limits_n
\mathop{\underleftarrow{\lim}}\limits_{m
> n} H^*(C, \ff \otimes_{\oo_X} J_C^n
/ J_C^m)  \mbox{,}
$$
where for $i=0$ and $i=1$ we have $\dim_k H^i(C, \ff \otimes_{\oo_X}
J_C^n / J_C^m ) < \infty$. For $i=0$ and $i=1$ the $k$-vector space
$H^i(\ad_{X,C}(\ff))$ has the natural topology of inductive and
projective limits. It is not difficult to see that the space
$H^0(\ad_{X,C}(\ff))$ is a locally linearly compact $k$-vector
space, i.e. it is a $1$-Tate vector space. But the space
$H^1(\ad_{X,C}(\ff))$ is not a Hausdorff space in this topology. Let
$\tilde{H}^1(\ad_{X,C}(\ff))$ be the quotient space of
$H^1(\ad_{X,C}(\ff))$ by the closure of zero. Then the space
$\tilde{H}^1(\ad_{X,C}(\ff))$ is a locally linearly compact
$k$-vector space, i.e. a $1$-Tate vector space.

We note that for any invertible subsheaves $ 0 \ne \g \subset \h$ of
$k(X)$ we have that the space $B_x \otimes_{\hat{\oo}_x} (\h/\g)$ is
a $1$-Tate vector space, which is equal to zero for almost all
points $x \in C$. Hence, we obtain that the space $$\da_{X,C,1}(\h)
/ \da_{X,C,1}(\g) = \bigoplus\limits_{x \in C } B_x
\otimes_{\hat{\oo}_x} (\h/\g)$$ is a $1$-Tate vector space.

\medskip

For any point $x \in X$, we define a ring $K_x$ as the localization
of the ring $\hat{\oo}_x$ with respect to the multiplicative system
$\oo_x \setminus 0$. (We note that inside of the field $\Frac
\hat{\oo}_x$ the ring $K_x$ is defined as the product of two
subrings: $\hat{\oo}_x$ and $k(X)$.)

For any pair $x \in C$ (where $C$ is an irreducible curve on $X$ and
$x\in C$ is a closed point), we have the natural embeddings $k(X)
\hookrightarrow K_x$, $k(X)\hookrightarrow K_C$ (recall that $K_C$
is the completion of the field $k(X)$ with respect to the discrete
valuation given by the curve $C$). In addition, there are the
natural embeddings $K_x, K_C \hookrightarrow K_{x,C}$. Therefore, we
obtain
$$k(X) \hookrightarrow K_x
\hookrightarrow \da_{X,x} \qquad \mbox{,} \qquad k(X)
\hookrightarrow K_C \hookrightarrow \da_{X,C} \mbox{.}$$

\begin{thm}[Parshin reciprocity laws]
\begin{enumerate}
\item
 Fix a point $x \in X$. Consider elements $f,g,h$  of the group $K_x^{\times}$ of invertible elements of
 the ring $K_x$.
 Then the following product in $k(x)^{\times}$
contains only  finitely many non-equal to $1$ terms and
\begin{equation} \label{Parshin1}
\prod_{C \ni x} \{f,g,h \}_{x,C} = 1  \mbox{.}
\end{equation}
\item Fix a projective irreducible curve $C$ on $X$. Let elements $f,g,h$ be from the group  $K_C^{\times}$.
Then the following product in $k^{\times}$
contains only  finitely many non-equal to $1$ terms and
\begin{equation}  \label{Parshin2}
\prod_{x \in C} \on{Nm}_{k(x)/k} \{f,g,h \}_{x,C} = 1  \mbox{.}
\end{equation}
\end{enumerate}
\end{thm}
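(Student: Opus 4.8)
The plan is to follow the proof of the Weil reciprocity law in \S\ref{Weil} verbatim in structure, replacing its one-dimensional input by the two-dimensional one. The Weil proof rests on three facts: the tame symbol is a commutator in a central extension built from a determinant theory (proposition~\ref{tame symbol}), this central extension is additive and localizes through short exact sequences of Tate spaces (lemma~\ref{fil}), and it becomes trivial on any subgroup preserving a lattice or a colattice (lemma~\ref{prop3.4}). The two-dimensional analogues of these three facts are precisely theorem~\ref{2-tame} (the two-dimensional tame symbol is $C_3^{\Det}$), lemma~\ref{fil2} together with corollary~\ref{2Baer} (localization and additivity of $C_3^{\Det}$), and lemma~\ref{t} (triviality on lattice/colattice stabilizers). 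Both reciprocity laws will follow by computing $C_3^{\Det}$ for the action of a global group on a $2$-Tate adele space in two ways.

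For part (1) I would fix $x\in X$, regard the adele ring $\da_{X,x}$ as a $2$-Tate vector space over $k(x)$ with the distinguished lattice $\da_{X,x,1}(\oo_X)$, and let $K_x^\times$ act on it by multiplication, producing a central extension $\Det_{\da_{X,x}}$ of $K_x^\times$ by $\calP ic^\bbZ$ (over $k(x)$). Given $f,g,h\in K_x^\times$, all but finitely many curves $C\ni x$ satisfy $f,g,h\in\hat{\oo}_{x,C}^\times$, and for these the computation in case (i) of theorem~\ref{2-tame} gives $\{f,g,h\}_{x,C}=1$, so the product is finite; call the remaining curves $C_1,\dots,C_m$. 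Applying lemma~\ref{fil2} to
\[0\lrto\calV'\lrto\da_{X,x}\lrto\calV''\lrto 0,\qquad \calV''=\bigoplus_{1\le i\le m}K_{x,C_i},\]
where $\calV'$ is the restricted adele product away from $C_1,\dots,C_m$, I note that the subgroup generated by $f,g,h$ preserves the lattice $\prod_{C}\hat{\oo}_{x,C}$ in $\calV'$, so $C_3^{\Det_{\calV'}}$ vanishes on it by lemma~\ref{t}. Hence corollary~\ref{2Baer} and lemma~\ref{fil2} give $C_3^{\Det_{\da_{X,x}}}(f,g,h)=\sum_i C_3^{\Det_{K_{x,C_i}}}(f,g,h)$, and theorem~\ref{2-tame} (applied over $k(x)$) together with the definition~\eqref{around x,C} identifies each summand with $\{f,g,h\}_{x,C_i}$, so that $C_3^{\Det_{\da_{X,x}}}(f,g,h)=\prod_{C\ni x}\{f,g,h\}_{x,C}$. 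On the other hand $K_x^\times$ preserves the colattice $\da_{X,x,0}=\Frac\hat{\oo}_x$ of $\da_{X,x}$, so $\Det_{\da_{X,x}}$ restricted to $K_x^\times$ is trivial by lemma~\ref{t}, whence $C_3^{\Det_{\da_{X,x}}}$ vanishes by corollary~\ref{trivial}. Equating the two computations gives \eqref{Parshin1}.

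Part (2) runs in exactly the same way over the ground field $k$: I would fix the projective curve $C$, view $\da_{X,C}$ as a $2$-Tate space over $k$, let $K_C^\times$ act by multiplication, and observe that for $f,g,h\in K_C^\times$ only finitely many points $x_1,\dots,x_m\in C$ are bad. The identical decomposition via lemma~\ref{fil2}, lemma~\ref{t} and corollary~\ref{2Baer} gives $C_3^{\Det_{\da_{X,C}}}(f,g,h)=\sum_i C_3^{\Det_{K_{x_i,C}}}(f,g,h)$. The only new feature is that each $K_{x,C}$ is naturally a $2$-Tate space over $k(x)$ rather than over $k$, so that theorem~\ref{2-tame} now produces the norm and gives $C_3^{\Det_{K_{x_i,C}}}(f,g,h)=\on{Nm}_{k(x_i)/k}\{f,g,h\}_{x_i,C}$; summing over $i$ yields $\prod_{x\in C}\on{Nm}_{k(x)/k}\{f,g,h\}_{x,C}$. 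Since $K_C^\times$ preserves the colattice $\da_{X,C,0}=K_C$, the left-hand side vanishes by lemma~\ref{t} and corollary~\ref{trivial}, giving \eqref{Parshin2}.

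The step I expect to be the main obstacle is the final triviality assertion, i.e.\ that the global group indeed stabilizes a genuine colattice so that lemma~\ref{t} applies. For part (1) the substance of this is the statement, recorded above, that $H^0(\ad_{X,x}(\ff))$ and $H^1(\ad_{X,x}(\ff))$ are $1$-Tate vector spaces; the cleanest justification imitates the remark after the Weil law, producing a canonical isomorphism $\Det_{\da_{X,x}}(g)\simeq\Det(H^*(\ad_{X,x}(g\oo_X)))-\Det(H^*(\ad_{X,x}(\oo_X)))$ and using that multiplication by $g\in K_x^\times$ induces a canonical isomorphism of adele complexes, hence a trivialization of the central extension. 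For part (2) the same strategy applies but is genuinely more delicate, because $H^1(\ad_{X,C}(\ff))$ is \emph{not} Hausdorff; here one must first pass to the $1$-Tate quotient $\tilde H^1(\ad_{X,C}(\ff))$ by the closure of zero and verify that the determinant construction, and the trivialization coming from multiplication by $g\in K_C^\times$, descend to this quotient without affecting the value of $C_3$. Controlling this non-Hausdorff correction is the technical heart of the argument; everything else is a routine two-dimensional transcription of \S\ref{Weil}.
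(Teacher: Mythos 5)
Your part (1) is essentially the paper's own argument: decompose $\da_{X,x}$ via lemma~\ref{fil2}, kill the unramified part by lemma~\ref{t}, add up by corollary~\ref{2Baer}, and trivialize the global extension. Note that the paper never claims $\Frac\,\hat{\oo}_x$ is a colattice in the technical sense of definition~\ref{latcol}; the triviality on $K_x^\times$ is obtained exactly by the explicit mechanism you describe in your last paragraph, namely the canonical isomorphism $\calD et\,(\da_{X,x,1}(\hat{\oo}_x)\mid\da_{X,x,1}(d\hat{\oo}_x))\simeq\calD et(H^*(\ad_{X,x}(d\hat{\oo}_x)))-\calD et(H^*(\ad_{X,x}(\hat{\oo}_x)))$ together with multiplication by $d$, so your self-correction lands on the right proof. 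One small omission: the decomposition naturally runs over \emph{all} height-one primes $\mathbf C$ of $\hat{\oo}_x$ (formal branches), not just algebraic curves, and one needs the observation that every $d\in K_x^\times$ is a unit along each non-algebraic branch to pass from that product to \eqref{Parshin1}.

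Part (2) has a genuine gap, and it is not the non-Hausdorff $H^1$ you single out (that issue is real but secondary). Your "identical decomposition" needs a cofinite set of points $x\in C$ at which the group generated by $f,g,h$ preserves the lattice $\hat{\oo}_{x,C}\subset K_{x,C}$. But for $f\in K_C^\times$ one has $f\,\hat{\oo}_{x,C}=\hat{\oo}_{x,C}$ if and only if $\nu_C(f)=0$, a condition independent of $x$: as soon as one of $f,g,h$ has nonzero valuation along $C$ (e.g.\ $f=s_C$, which is the only interesting case, since if all three are units along $C$ every symbol $\{f,g,h\}_{x,C}$ is already $1$), it fails to preserve $\hat{\oo}_{x,C}$ at \emph{every} point of $C$. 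So there is no short exact sequence with finitely many bad local factors, and lemma~\ref{t} never applies to the complement. This is exactly why the paper introduces a second family of central extensions $\calD et'_{x,C}$, built by formula~\eqref{detlast} from the subrings $B_x\subset K_{x,C}$ (for smooth $x$, $B_x\simeq k(x)[[t]]((s_C))$, which is \emph{not} a lattice in $K_{x,C}$): elements of $k(X)^\times$ do preserve $B_x$ for almost all $x\in C$, and the global ring $\da_{X,C,1}(\oo_X)$, assembled from the $B_x$, is what pairs with $K_C$ in the adele complex. The price is a key comparison lemma absent from your proposal: $\calD et'_{x,C}$ is \emph{inverse} to the lattice extension $\calD et_{x,C}$ of theorem~\ref{2-tame}, proved via the complex $(B_x\otimes_{\hat{\oo}_x}\ff)\oplus(\hat{\oo}_{x,C}\otimes_{\hat{\oo}_x}\ff)\lrto K_{x,C}$, whose cohomology $H^*(U_x,\ff\mid_{U_x})$ consists of $1$-Tate spaces and is trivialized by multiplication by $d$; this is what identifies $C_3^{\calD et'_{x,C}}(f,g,h)$ with $\on{Nm}_{k(x)/k}\{f,g,h\}_{x,C}^{-1}$ and makes the product formula close up. Finally, the whole $B_x$-argument is only available for $f,g,h\in k(X)^\times$ (a general element of $K_C^\times$ need not keep $B_x$ commensurable with itself, nor preserve almost all $B_x$), so one also needs the reduction step your proposal omits: approximate $f,g,h\in K_C^\times$ by elements of $k(X)^\times$ using density of $k(X)$ in $K_C$, the symbols being unchanged under multiplication by $1+m_C^n$.
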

\begin{proof} We first prove formula~\eqref{Parshin1}. By
theorem~\ref{2-tame}, for any $f,g,h \in K_{x,\mathbf C}^{\times}$
we have
\begin{equation} \label{fffor}
\{ f,g,h\}_{x, \mathbf C} = C_3^{\calD et_{x,\mathbf C}}(f,g,h)
\end{equation}
 for
all prime ideals $\mathbf C$ of height $1$ of the ring
$\hat{\oo}_{x}$, where the central extension $\calD et_{x,\mathbf
C}$ of the group $K_{x,\mathbf C}^{\times}$ by the Picard groupoid
$\calP ic^{\bbZ}$ is constructed by formula~\eqref{pictorsor} from
the $2$-Tate vector space $K_{x, \mathbf C}$ over the field $k(x)$
and the lattice $\hat{\oo}_{x,\mathbf C}$ as in
section~\ref{2-story}. We note that for almost all  prime ideals $
\mathbf C$ of height $1$ of ring $\hat{\oo}_{x}$, and for any
elements $f,g,h$ from the group $\Frac \hat{\oo}_x^{\times}$, we
have $f \oo_{x,\mathbf C}= \oo_{x, \mathbf C}$, $g \oo_{x, \mathbf
C}= \oo_{x,\mathbf C}$, and $h \oo_{x, \mathbf C}= \oo_{x, \mathbf
C}$. Then by lemma~\ref{t} and corollary~\ref{trivial}, for almost
all prime ideals $\mathbf C$ of height $1$ of ring $\hat{\oo}_x$ we
have $C_3^{\calD et_{x,\mathbf C}}(f,g,h)=1$.

We will prove that the central extension $\calD et_{x}$ of $\Frac
\hat{\oo}_x^\times$($\subset\GL(\da_{X,x})$) by $\calP ic^{\bbZ}$
constructed by the $2$-Tate vector space $\da_{X,x}$ and the lattice
$\da_{X,x,1}(\hat{\oo}_x)$ using formula~\eqref{pictorsor} can be
trivialized in an explicit way. Observe that for any $d \in
\Frac\hat{\oo}_x^{\times}$, there is a canonical isomorphism of
$\calP ic^{\bbZ}$-torsors:
\begin{equation} \label{detisom2}
\calD et (\da_{X,x,1}( \hat{\oo}_x)) \mid \da_{X,x,1}(d
\hat{\oo}_x)) \: \simeq \: \calD et (H^* (\ad_{X,x}(d \hat{\oo}_x)))
- \calD et(H^* (\ad_{X,x}( \hat{\oo}_x))) \mbox{,}
\end{equation}
where for any free subsheaf $\ff$ of $\Frac \hat{\oo}_x$ on the
scheme $\spec \hat{\oo_x}$
$$
\calD et (H^*(\ad_{X,x}(\ff))) := \calD et (H^0 (\ad_{X,x}(\ff))) -
\calD et (H^1
 (\ad_{X,x}(\ff)))  \mbox{.}
$$
Indeed, isomorphism~\eqref{detisom2} follows from
proposition~\ref{det gerbe} applied to the long exact sequence
(decomposed into the short exact sequences) associated with the
following exact sequence of complexes of length  $2$ for any nonzero
free subsheaves $\g \subset \h$ of $\Frac \hat{\oo}_x$ on the scheme
$\spec \hat{\oo_x}$:
$$
0 \lrto \ad_{X,x}(\g) \lrto \ad_{X,x}(\h) \lrto \ad_{X,x,1}(\h)/
\da_{X,x,1}(\g) \lrto 0 \mbox{,}
$$
where the last complex consists only of the group placed  in degree
zero. Now we have
\begin{multline} \label{mult2}
\calD et (H^* (\ad_{X,x}(d \hat{\oo}_x))) - \calD et(H^* (\ad_{X,x}(
\hat{\oo}_x)))  \\ \simeq \Hom_{\calP ic^{\bbZ}} (\calD et
(H^* (\ad_{X,x}(\hat{\oo}_x))), \calD et(H^* (\ad_{X,x}(d
\hat{\oo}_x))) \mbox{.}
\end{multline}
The multiplication by the element $d \in
\Frac{\hat{\oo}_x}^{\times}$ between ad\`ele complexes $\ad_{X,x}(
\hat{\oo}_x)$ and $\ad_{X,x}(d \hat{\oo}_x)$ gives a natural
isomorphism of $\calP ic^{\bbZ}$-torsor from formula~\eqref{mult2}
to the trivial torsor $\calP ic^{\bbZ}$.

Let $H$ be the subgroup of $\Frac \hat{\oo}_x^{\times}$ generated by
the elements $f,g,h \in \Frac \hat{\oo}_x^{\times}$. Now we proceed
as the proof of Weil reciprocity law (see
proposition~\ref{Weilform}), with the help of lemma~\ref{fil2},
lemma \ref{t}, and corollary~\ref{2Baer}. Then we obtain the
following equality:
$$
\prod_{ {\mathbf C} \ni x} \{f,g,h  \}_{x, \mathbf C} =1  \mbox{.}
$$

Formula~\eqref{Parshin1} follows from the last formula, since if a
prime ideal $\mathbf C$ of height $1$ in $\hat{\oo}_x$ is not a
formal branch at $x$ of some irreducible curve $C$ on $X$, then for
any element $d \in K_x^\times$ we have $d \oo_{x, \mathbf C} =
\oo_{x, \mathbf C}$. Hence, by formula~\eqref{fffor}, $\{f,g,h
\}_{x, \mathbf C}=1$ for such $\mathbf C$ and any $f,g,h \in
K_x^{\times}$.

\medskip

Next we will prove formula~\eqref{Parshin2}. We construct the
central extension $\calD et'_{x,C}$ of the group $k(X)^{\times}$  by
the Picard groupoid $\calP ic^{\bbZ}$ in the following way. We fix a
point $x \in C$, and associate with the rings $B_x \subset K_{x, C}$
and with an element $d \in k(X)^{\times}$ the following $\calP
ic^{\bbZ}$-torsor:
\begin{equation} \label{detlast}
\Det(B_x \mid d B_x):=\Det \left(\frac{d B_x}{B_x\cap d B_x} \right)
-\Det \left(\frac{B_x}{B_x\cap d B_x} \right) \mbox{.}
\end{equation}
(We used that $B_x / B_x \cap d B_x$ is a $1$-Tate vector space over
the field $k$.) By the formula which is analogous to
formula~\eqref{deiso} we obtain that the central extension $\calD
et'_{x,C}$ is well defined. In a similar way we define the central
extensions $\calD et'_{C}$ and $\calD et'_{C \setminus \{x_1, \ldots
, x_l \}}$ starting from the rings $\da_{X,C,1}(\oo_X) \subset
\da_{X,C}$ and $\da_{X,C \setminus\{ x_1, \ldots, x_l\},1}(\oo_X)
\subset \da_{X,C \setminus \{x_1, \ldots, x_l \}}$, where $x_1,
\ldots, x_l$ are some points on the curve $C$.

Let the group $H$ be generated in the group $k(X)^{\times}$ by the
elements $f,g,h \in k(X)^{\times}$.
 For almost all points $x$ of the curve $C$
we have that the group $H$ preserves the subring $B_x$. Therefore
form formula~\eqref{detlast} we obtain that the central extension
$\calD et'_{x,C}$ is isomorphic to the trivial one for almost all
points $x$ of the curve $C$. Therefore for almost all points $x$ of
the curve $C$ we have $C_3^{\calD et'_{x, C}}(f,g,h) =1$.

We will prove that the central extension $\calD et'_{x,C}$ is
inverse (or dual) to the central extension $\calD et_{x,C}$, where
the last central extension is constructed by
formula~\eqref{pictorsor} from the lattice $\oo_{x,C}$ in the
$2$-Tate vector space $K_{x,C}$. For any free subsheaf  $\ff$ of the
constant sheaf $\Frac \hat{\oo}_x$ on the scheme $\spec \hat{\oo}_x$
there is the following complex $\ad_{X,C,x}(\ff)$:
$$
(B_x \otimes_{\hat{\oo}_x} \ff) \; \oplus \; (\hat{\oo}_{x,C}
\otimes_{\hat{\oo}_x} \ff) \lrto K_{x,C} \mbox{.}
$$
We have canonically that $H^*(\ad_{X,C,x}(\ff)) = H^* (U_x, \ff
\mid_{U_x})$, where we recall $U_x = \spec \hat{\oo}_x \setminus x $
(see the proof of proposition~13 from~\cite{O3}). Therefore the
cohomology groups of complex $\ad_{X,C,x}(\ff)$ are $1$-Tate vector
spaces. Hence, there is a canonical isomorphism between the
following  $\calP ic^{\bbZ}$-torsors for any $d \in k(X)^{\times}$:
$$
\Det(B_x \mid d B_x) + \Det(\hat{\oo}_{x,C} \mid d \hat{\oo}_{x,C})
\qquad \mbox{and}
$$
$$
\Hom_{\calP ic^{\bbZ}} (\calD et(H^*(\ad_{X,C,x}(\hat{\oo}_x))) \,,
\, \calD et(H^*(\ad_{X,C,x}(d\hat{\oo}_x)))   ) \mbox{.}
$$
Now the multiplication by the element $d$ of ad\`ele complexes gives
a natural isomorphism from the last $\calP ic^{\bbZ}$-torsor to the
trivial one. Hence from corollary~\ref{2Baer} we have that
$$
C_3^{\calD et'_{x,C}} (f,g,h) = {{C_3^{\calD et'_{x,C}}}(f,g,h)}
^{-1} = \on{Nm}_{k(x)/k} \{f,g,h \}_{x,C}^{-1}
$$
for $f,g,h \in k(X)^{\times}$.

Now the proof of formula~\eqref{Parshin2} for elements $f,g,h \in
k(X)^{\times}$ follows by the same method as in the proof of
formula~\eqref{Parshin1}, but we have to use the ad\`ele ring
$\da_{X,C}$ instead of the ring $\da_{X,x}$, and to use the central
extension $\Det'_{C}$ instead of the central extension $\Det_{{x}}$.
We need only to prove that the central extension $\calD et'_{C}$
constructed by the analog of formula~\eqref{detlast} from the rings
$\da_{X,C,1}(\oo_X) \subset \da_{X,C}$ is isomorphic the trivial
central extension. This follows if we consider the following $\calP
ic^{\bbZ}$-torsors for $d \in k(X)^{\times}$
\begin{equation} \label{ddd}
\Hom_{\calP ic^{\bbZ}} (\calD et(H^*(\ad_{X,C}(\oo_X))) \,, \, \calD
et(H^*(\ad_{X,C}(d \oo_X))) ) \mbox{,}
\end{equation}
 where
$$
\calD et(H^*(\ad_{X,C}(d \oo_X)) := \calD et(H^0(\ad_{X,C}(d \oo_X))
- \calD et(\tilde{H}^1(\ad_{X,C}(d \oo_X)) \mbox{.}
$$
Multiplication by $d \in k(X)^{\times}$ of ad\`ele complexes  gives
the triviality of the $\calP ic^{\bbZ}$-torsor~\eqref{ddd}. (See
analogous reasonings earlier in the proof of this theorem.)

To obtain formula~\eqref{Parshin2} for elements $f,g,h \in
K_C^{\times}$ we have to use that the field $k(X)$ is dense in the
field $K_{C}$. Therefore  for any element $f\in K_C^{\times}$ there
is an element $\tilde{f} \in k(X)^{\times}$ such that $f = \tilde{f}
m$, where the element $m$ is from the subgroup $ 1 + m_C^n$ of the
group $K_C^{\times}$ for some $n \ge 1$, and $m_C$ is the maximal
ideal of the valuation ring of discrete valuation field $K_C$. Then
from formula~\eqref{ts3} we have that $\{m, g, h\}_{x, \mathbf C}
=1$ for any point $x \in C$, and any formal branch $\mathbf C$ of
the curve $C$ at point $x$. Hence, from the tri-multiplicativity of
the two-dimensional tame symbol we obtain that
$$
\{f,g,h\}_{x,\mathbf C} = \{\tilde{f},g,h\}_{x,\mathbf C} \mbox{.}
$$
Applying successively   the same procedure to elements $g,h \in
k_C^{\times}$ we obtain
$$
\{f,g,h\}_{x,\mathbf C} = \{\tilde{f},\tilde{g},
\tilde{h}\}_{x,\mathbf C} \mbox{,}
$$
where $\tilde{f}, \tilde{g}, \tilde{h} \in k(X)^{\times}$, and any
point $x \in C$, and $\mathbf C$ is any formal branch of the curve
$C$ at point $x$.

\end{proof}

\begin{rmk}
For the proof of Parshin reciprocity laws we used "semilocal"
ad\`ele complexes of length $2$ connected with either points or
irreducible curves on an algebraic surface. But for the formulation
of these reciprocity laws we used the rings $K_x$ and $K_C$ which
appear from the "global" ad\`ele complex of length $3$ on an
algebraic surface. It would be interesting to find direct
connections between the "global" ad\`ele complex and "semilocal"
ad\`ele complexes of an algebrac surface.
\end{rmk}

\begin{rmk}
We have a symmetric monoidal functor from the Picard torsor $\calP
ic^{\bbZ}$ to the Picard groupoid $\bbZ$ which sends every graded
line to its grading element from $\bbZ$,  where $\bbZ$ is considered
as the  groupoid with objects equal to $\bbZ$ and morphisms equal to
identities morphisms. Under this functor a central extension of a
group $G$ by a $\calP ic^{\bbZ}$-torsor goes to the usual central of
the group $G$ by the group $\bbZ$. In this way the map $\nu_{\bbK}$
for a two-dimensional local field $\bbK$ was obtained as the
commutator of elements in this central extension in~\cite{O3}. Also
in~\cite{O3} the reciprocity laws for the map $\nu_{\bbK}$ were
proved by the ad\`ele complexes on an algebraic surface.
\end{rmk}

\bigskip

\noindent Denis Osipov, \\
Steklov Mathematical Institute, Gubkina
str. 8,
Moscow, 119991, Russia \\
E-mail address:  ${d}_{-} osipov@mi.ras.ru$

\bigskip
\noindent Xinwen Zhu, \\
Department of Mathematics, Harvard University, one Oxford Street, Cambridge, MA 02138, USA \\
E-mail address:  $xinwenz@math.hatvard.edu$

\end{document}